\def\RR{\mathbb R}
\def\NN{\mathbb N}
\def\PP{\mathbb P}
\def\C{\mathcal C}
\def\F{\mathcal F}
\def\K{\mathcal K}
\def\phi{\varphi}
\newtheorem{thm}{Theorem}[section]
\newtheorem{lem}[thm]{Lemma}
\newtheorem{prop}[thm]{Proposition}
\theoremstyle{definition}
\newtheorem{defn}[thm]{Definition}
\newtheorem{exam}[thm]{Example}
\def\tilde{\widetilde}
\def\eps{\varepsilon}
\def\leq{\leqslant}
\def\geq{\geqslant}
\def\bar{\overline}
\def\hat{\widehat}
\newcommand{\abs}[1]{\left\vert#1\right\vert}
\newcommand{\set}[1]{\left\{#1\right\}}
\numberwithin{equation}{section}
\begin{document}
\date{}
\title{Analysis of Reaction-Diffusion Predator-Prey System under Random Switching}

\author{
Nguyen Huu Du\thanks{Department of Mathematics-Mechanics-Informatics,
	Vietnam National University of Science,
	34 Nguyen Trai,
	Thanh Xuan, Ha Noi,	Vietnam, dunh@vnu.edu.vn. This author’s research was supported in part by 
	by Vietnam National Foundation for Science and Technology Development (NAFOSTED) under grant number 101.03-2021.29.} 
\and Nhu Ngoc Nguyen\thanks{
	Department of Mathematics and Applied Mathematical Sciences,	University of Rhode Island,
	5 Lippitt Road,
	Kingston, RI 02881-2018,
	United States, nhu.nguyen@uri.edu. 
	This author’s research was supported in part by the National Science
	Foundation under grant DMS-2407669. 
	This work was finished when the author was visiting Vietnam Institute for Advance Study
	in Mathematics (VIASM). He is grateful for the
	support and hospitality of VIASM.}
}
\maketitle

\begin{abstract}
This paper investigates the long-term dynamics of a reaction-diffusion predator-prey system subject to random environmental fluctuations modeled by Markovian switching. The model is formulated as a hybrid system of partial differential equations (PDEs), where the switching between different ecological regimes captures the randomness in environmental conditions. We derive a critical threshold parameter that determines whether the predator species will eventually go extinct or persist. We further characterize the system’s asymptotic behavior by providing a detailed pathwise description of the $\omega$-limit set of solutions. This analysis reveals how the effects of random switching shape the distribution and long-term coexistence of the species. Numerical simulations are provided to validate and illustrate the theoretical findings, highlighting transitions between different dynamical regimes. To the best of our knowledge, this is the first work that rigorously analyzes a spatially diffusive predator-prey model under Markovian switching, thereby bridging the gap between spatial ecology and stochastic hybrid PDE systems.
\end{abstract}
\medskip

\noindent {\bf Keywords.} 
Random reaction-diffusion equation,
population dynamics, switching systems,
Lotka-Volterra predator-prey model, 
extinction, persistence, $\omega$-limit set.

\medskip
\noindent{\bf Mathematics Subject Classification.} 
34C12, 60H15, 60H30, 	60H40, 92D15, 92D25, 92D40.

\section{Introduction}\label{s1}

One of the central questions in ecology is: ``Under what conditions will species persist, and when will they face extinction?" This fundamental inquiry lies at the heart of understanding biodiversity, population stability, and ecosystem resilience. To address this, researchers develop mathematical models that simulate the interactions among species, their responses to environmental changes, and their spatial distributions. Such models not only aim to capture the underlying mechanisms governing species survival but also seek to predict scenarios where species might thrive or be at risk. Analyzing these models provides insights into the conditions that support persistence and those that lead to extinction, ultimately helping ecologists and conservationists design strategies to protect vulnerable species and maintain ecosystem balance.
Predator-prey ecological systems are fundamental models in ecology that describe the interactions between species, where one (the predator) feeds on the other (the prey). These systems help illustrate how populations of both species fluctuate over time, often in cycles. Predators depend on prey for survival, while prey populations are controlled by the presence of predators. Such models are crucial for understanding ecosystem dynamics, species persistence, and the balance of biodiversity in natural environments.

{\bf ODE models.} 
The classical (deterministic) Lotka-Volterra equations are commonly used to study these relationships and take the form
\begin{equation}\label{eq-ode}
	\begin{cases}
		\dfrac{du(t)}{dt}=u(t)(a- bu(t)-cv(t)),\\[1ex]
		\dfrac{dv(t)}{dt}=v(t)(-d+ eu(t)-fv(t)),
	\end{cases}
\end{equation}
where $u(t)$, $v(t)$ are the densities of the prey and the predator at time $t \geq 0$ and $a,b,c,d,e,f$ are positive constants.
Such a model was first introduced in \cite{lotka1925elements}. Then, \eqref{eq-ode} and its variants have been used and studied in existing literature by many research in both the mathematics community (e.g., \cite{arditi1989coupling,li2011dynamics,liu2018dynamics,liu2018persistence,zhang2012spatial}) and ecology community (e.g., \cite{beddington1975mutual,dl1975model,holling1959components}).
The parameters in \eqref{eq-ode} interpret biological rates and relationships between predator and prey. For example, 
$a$ stands for the per capita growth rate; $c$ is the predation rate;
$d$ is the death rate of the predator when there is no prey; etc.
For more detail, please see the aforementioned references.

{\bf Stochastic ODE models.}
From a different perspective, the above (deterministic) ODE models in general are not able to describe the effects of various random factors. The inherent randomness of the environment renders population dynamics stochastic. Therefore, it is essential to consider the combined impact of spatial interactions and environmental fluctuations. 
To take into account for these fluctuations and their influence on species persistence or extinction, one approach is to study stochastic predator-prey systems.
Many researchers consider \eqref{eq-ode} with white noise under framework of stochastic differential equations (SDEs); see e.g., \cite{dieu2016protection,du2016conditions, WW2023} and references therein. 

{\bf PDE models.}
However, the model \eqref{eq-ode} does not specify the distribution of the species in areas they are living, its dependence on space, and species' movements.
Therefore, much effort has been devoted to study state-dependence predator-prey model that are able to capture distribution and diffusion in space of species. 
More precisely, let $E$ be a bounded domain in the Euclidean
space $\RR^n$ with sufficiently smooth boundary $\partial E$. 
Assume that species are living in $E$ and let
$u(t,x), v(t,x)$ be real-valued functions of $(t, x)\in [0, \infty)\times E$, which are to be interpreted as population densities of prey and predator at time $t$ and location $x$. 
The classical deterministic spatial predator-prey (see e.g., \cite{leung1978limiting,williams1978nonlinear})  suggests that the densities $u(t,x),v(t,x)$ satisfy the partial differential equation (PDE)
\begin{equation}\label{eq-pde}
	\begin{cases}
		\dfrac{\partial u(t,x)}{\partial t}=\alpha_1\Delta u(t,x) + u(t,x)(a- bu(t,x)-cv(t,x)),\\[1.5ex]
		\dfrac{\partial v(t,x)}{\partial t}=\alpha_2\Delta v(t,x) + v(t,x)(-d+ eu(t,x)-fv(t,x)),\\
u(0,x)=u_0(x)\geq 0,\; v(0,x)=v_0(x)\geq 0,
	\end{cases}
\end{equation}
with the Neumann boundary condition 
\begin{equation}\label{e3}
	\frac{\partial u}{\partial n}= \frac{\partial v}{\partial n}=0 \text{ on } [0,\infty)\times \partial E, 
\end{equation}
where $\alpha_1,\alpha_2>0$ are diffusive constants.
These spatially heterogeneous predator-prey systems and their variants have been widely studied; see e.g., \cite{ai2017traveling,kao2014global,lam2016emergence,li2018asymptotic,Zhao2024,xue2024exponential}.

In general, the model \eqref{eq-pde} is a type of the so-called
 ``reaction-diffusion systems" in PDE community. Such models have been studied extensively  and demonstrate important applications in mathematical biology.
 For example, the works in \cite{bentout2024asymptotic,djilali2024generalities,djilali2024dynamics,djilali2025dynamics1,djilali2025dynamics} and the references therein make a significant progress to bridge PDE models and epidemic models; or the works \cite{li2025lotka,pan2024global} investigate ecological competitive models under the PDE setting; and among others.

{\bf PDE models with randomness.}
While PDE models account for spatial heterogeneity, they remain deterministic. More recently, the second author and a collaborator initiated the study of spatially heterogeneous systems of the form \eqref{eq-pde}, incorporating space-time white noise within the framework of stochastic partial differential equations (SPDEs) \cite{nguyen2019stochastic}; see also \cite{agresti2024reaction,bendahmane2023stochastic,hu2022analysis, qi2025dynamics, xiong2024persistence} and the references therein.

However, many sources of environmental variability in real ecological systems arise from discrete and abrupt random events (or color noise) rather than continuous fluctuations (or white noise). Examples include seasonal cycles that follow predictable annual patterns and extreme weather events such as hurricanes, droughts, and floods. These events can cause sudden shifts in population sizes, disrupt ecological interactions, and even reshape physical landscapes, profoundly affecting species survival and ecosystem stability. Because such phenomena are inherently discontinuous and often follow discrete probabilistic rules, models relying solely on continuous white noise may fail to capture these essential dynamics.

To better reflect this nature of environmental randomness, it is therefore natural to consider stochastic models driven by discrete noise sources, such as Markov chains or telegraph noise. These alternatives offer a more realistic framework for studying ecological systems subject to sudden or periodic environmental disturbances.
A suitable mathematical framework for this model is that of random PDEs with Markov switching, an area that has been receiving increasing attention in recent years. For example, for certain classes of such random switching PDEs, \cite{li2024well, li2023diffusion} establish well-posedness results; \cite{xue2024exponential} investigates stability; \cite{cheng2024space} addresses control problems; \cite{li2025synchronization,zhao2024almost} consider synchronization problems and their application in related fields; and \cite{zhang2024parameters} explores parameter inference; and among others.
However, research on the long-term behavior of random switching PDEs remains relatively scarce, particularly regarding their connections to biological and ecological applications.

Inspired by the above motivation, in this work, we focus on predator-prey model using random PDE with Markovian switching.
Formally, 
let $\xi(t)$ be a Markov process that switches with finite state space.
We now assume that $a, b, c, d, e, f$ in \eqref{eq-pde} are functions of $\xi(t)$ and consider the hybrid reaction-diffusion system under random switching
\begin{equation}\label{eq-main}
	\begin{cases}
		\dfrac{\partial u}{\partial t}=\alpha_1(\xi(t))\Delta u + u(a(\xi(t))- b(\xi(t))u-c(\xi(t))v),\\[1ex]
		\dfrac{\partial v}{\partial t}=\alpha_2(\xi(t))\Delta v + v(-d(\xi(t))+ e(\xi(t))u-f(\xi(t))v),\\[1ex]
		u(0,x)=u_0(x)\geq 0,\;v(0,x)=v_0(x)\geq 0,\;\xi(0)=\xi_0, 
	\end{cases}
\end{equation}
with the Neumann boundary condition \eqref{e3}.

To better illustrate the idea, consider the case that $\xi(t)$ switches (randomly) between two states $\{+,-\}$ in this paper.
Such a stochastic environment switches randomly system \eqref{eq-pde} between two deterministic systems
\begin{equation}\label{e6+}
	\begin{cases}
		\dfrac{\partial u_+(t,x)}{\partial t}=\alpha_1(+)\Delta u_+(t,x) + u_+(t,x)(a(+)- b(+)u_+(t,x)-c(+)v_+(t,x)),\\[1ex]
		\dfrac{\partial v_+(t,x)}{\partial t}=\alpha_2(+)\Delta v_+(t,x) + v_+(t,x)(-d(+)+ e(+)u_+(t,x)-f(+)v_+(t,x)),
		\end{cases}
\end{equation}
and
\begin{equation}\label{e6-}
	\begin{cases}
		\dfrac{\partial u_-(t,x)}{\partial t}=\alpha_1(-)\Delta u_-(t,x) + u_-(t,x)(a(-)- b(-)u_-(t,x)-c(-)v_-(t,x)),\\[1ex]
		\dfrac{\partial v_-(t,x)}{\partial t}=\alpha_2(-)\Delta v_-(t,x) + v_-(t,x)(-d(-)+ e(-)u_-(t,x)-f(-)v_-(t,x)),
\end{cases}
\end{equation}
both are with the Neumann boundary condition \eqref{e3}.



In this study, we begin by rigorously formulating a prey-predator model with state dependence under random switching and establishing its well-posedness. Following this foundational work, our focus shifts to understanding the asymptotic behavior of the population densities 
$u(t,x)$ for prey and 
$v(t,x)$ for predators as time 
$t\to\infty$. Specifically, we aim to classify completely conditions under which a species will either persist or face extinction based on system parameters. Additionally, we investigate the pathwise behavior of the system by thoroughly describing the $\omega$-limit set, which characterizes the long-term dynamics and stability of the system trajectories.
Finally, we present some numerical examples illustrating our theoretical results.

{\bf Our contribution.} We highlight below the main contributions and novelties of our work compared with existing studies:
\begin{itemize}
	\item This study presents the first investigation of a stochastic, state-dependent predator-prey model influenced by random environmental switching.
	A complete characterization of extinction and persistence is provided and the $\omega$-limit set is described.
	These results offer insights both phenomenologically for stochastic reaction-diffusion predator-prey models
	and mathematically for PDEs with random switching.
	\item We introduce novel analytical methods beyond those commonly used in the literature. While the Lyapunov functional approach is standard for studying the long-term behavior of stochastic systems, our analysis focuses on the detailed pathwise behavior of solutions and the concept of Lyapunov exponent from dynamical system theory is used to define a threshold, which characterizes longtime properties.
	 This is particularly challenging due to the combined randomness in both the system’s state and temporal evolution, necessitating new techniques.
\end{itemize}

The rest of paper is organized as follows.
Section \ref{s:2} is devoted to the formulation of the problems, some preliminaries, and definitions.
Section \ref{sec:class} investigates the longtime behavior of the underlying system.
Section \ref{sec:kpp} studies carefully random Fisher-KPP equation, which is used to investigate \eqref{eq-main} but also is of independent interest.
Section \ref{subsec-class} introduces a threshold and proves that its sign characterizes extinction and persistence while Section \ref{sec-omega} describes the $\omega$-limit set of \eqref{eq-main}.
Finally, Section \ref{sec:num} provides some numerical simulations to illustrate theoretical results and Section \ref{sec:con} concludes the paper.

\section{Formulation}\label{s:2}

Let $(\Omega,\{\F_t\}_{t\geq 0},\PP)$ be a probability space satisfying the general hypotheses and $(\xi(t))_{t\geq 0}$ be a Markov
process, defined on $(\Omega,\F,\PP)$, taking values in the set of two elements, say $S= \{+,-\}.$ 
Suppose that
$(\xi(t))_{t\geq 0}$ has the transition intensities $+\xrightarrow{q(+)}-$ and $-\xrightarrow{q(-)}+
$ with $q(+) > 0$, $q(-) > 0$. 
The process $(\xi(t))_{t\geq 0}$ has a unique
stationary distribution
\begin{equation}\label {E2.1}
	\begin{cases}
\pi(+)=\lim_{t\to\infty} \PP\{\xi(t) = +\} = \frac{q(-)}
{q(+) +q(-)},\\
\pi(-)=\lim_{t\to\infty} \PP\{\xi(t) = -\} =\frac{q(+)}{q(+)+ q(-)}.
\end{cases}
\end{equation}
The trajectories of $(\xi(t))_{t\geq 0}$ are piece-wise constant, cadlag functions.


Throughout this paper, $E$ is an open bounded subset of $\mathbb R^n$ and its boundary satisfies some regular condition as described later in Theorem \ref{thm-pre}. 
We denote by $|E|$ its volume and $\bar E$ for its closure.
Denoted by $C(\bar E)$ the space of real-valued continuous functions on $\bar E$ endowed with the sup-norm.
For a function $z\in C(\bar E)$, $\delta\in\mathbb R$, we write $z\geq \delta$ if $z(x)\geq \delta,\;\forall x\in \bar E$.
For simplicity, we will use $[m_1,m_2]$ to denote the interval $[m_1,m_2]$ if $m_1\leq m_2$ and the interval $[m_2,m_1]$ if $m_2<m_1$.
Let $a_{\max}=\max_{i\in\{+,-\}}a(i)$, $a_{\min}=\min_{i\in\{+,-\}}a(i)$, and define similar notions for $b, c, d, e ,f$.

For $\rho_0 > 0$ and $0 < l < 1$,
be fixed numbers, 
$H^{2+l}(\bar E)$ denotes the Banach space
of all real-valued functions $z$ continuous on $\bar E$ with all first and second order
derivatives also continuous on $\bar E$, and with finite value for the norm
\begin{equation*} 
|z|_E^{(2+l)} := \sum_{0\leq |\beta|\leq 2}\sup_E |D^\beta z| + \sum_{|\beta|=2}\sup \frac{|[D^\beta z](x)-[D^\beta z](y)|}{|x-y|^l}
\end{equation*}
where $\beta= (\beta_1,\dots, \beta_n)$ is a multi-index, $|\beta| =\beta_1 + \dots+ \beta_n$, $D^\beta z=
\frac{\partial^{|\beta|}}{\partial x_1^{\beta_1}\dots \partial x_n^{\beta_n}}$. 
The second supremum is taken over all $x$, $y$ in $\bar E$ such that
$0 < |x -y| < \rho_0$.

We start with well-posedness and basis properties of the solution to \eqref{eq-main}.
Here, we consider solutions in the classical sense. Particularly, a $\F_t$-adapted stochastic process $(u(t,x),v(t,x))$ is a solution to \eqref{eq-main} if almost surely, $u(t,x),v(t,x)$ are differentiable in $t$, twice differentiable in $x$ and satisfy \eqref{eq-main}.

\begin{thm}\label{thm-pre}
	Let $E$ be an open bounded connected subset of
	$\RR^n$, $n\geq1$. 
	Assume that $\partial E \in H^{2+l}$ for some $l$ satisfying $0 < l < 1$. 
	Let $u_0$, $v_0$ be non-negative valued functions in $H^{2+l}(\bar E)$ with vanishing normal derivatives on $\partial E$. 
	Then, we have the following claims.
	\begin{itemize} 
\item[{\rm (i)}] Equation \eqref{eq-main} with initial condition $(u_0,v_0)$ and boundary conditions \eqref{e3} has a
	unique solution $(u(t,x)$, $v(t,x))$.
	Moreover, $u(t,\cdot)\in H^{2+l}(\bar E)$, $v(t,\cdot)\in H^{2+l}(\bar E)$ for all $t>0$ a.s.
\item[{\rm(ii)}] $u(t,x)\geq 0$, $v(t,x)\geq 0$ for all $(x, t)\in \bar E\times [0,\infty)$ a.s.
\item[{\rm (iii)}] If $u_0$, $v_0$ are not identically zero, then both $u(t,\cdot)$, $v(t,\cdot)$ are strictly positive for all $t >0$ a.s.
\item[{\rm (iv)}]
The solution is bounded almost surely. More precisely, with probability 1
\begin{equation}
	0\leq u(t,x)\leq M_1,\;0\leq v(t,x)\leq M_2,\;\forall (t,x)\in (0,\infty)\times \bar E,
\end{equation}
where
$$M_1>\max\Big\{\max_{i\in\{+,-\}}\frac{a(i)}{b(i)},\sup_{x\in\bar E}u_0(x)\Big\},$$
and
$$ M_2>\max\Big\{\max_{i\in\{+,-\}}\frac{a(i)}{c(i)},\max_{i\in\{+,-\}}\frac{M_1e(i)-d(i)}{f(i)},\sup_{x\in\bar E}v_0(x)\Big\}.$$

	\end{itemize}
\end{thm}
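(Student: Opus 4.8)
The plan is to exploit the piecewise-constant structure of $\xi(t)$ to reduce the problem to the classical deterministic theory of reaction--diffusion systems, and then to patch the solutions together across switching times. I would first fix a sample path of $(\xi(t))_{t\geq 0}$. Since $\xi$ takes only the two values $\pm$ and has positive, finite transition intensities $q(\pm)$, its holding times are exponential, so almost surely the jump times $0=\tau_0<\tau_1<\tau_2<\cdots$ are locally finite with $\tau_k\to\infty$. On each interval $[\tau_k,\tau_{k+1})$ all coefficients $\alpha_1,\alpha_2,a,\dots,f$ are frozen at one state, so \eqref{eq-main} becomes a deterministic quasilinear parabolic system with smooth reaction terms and Neumann data, namely \eqref{e6+} or \eqref{e6-}.

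For this deterministic building block I would invoke standard parabolic theory: given an initial datum in $H^{2+l}(\bar E)$ with vanishing normal derivative and $\partial E\in H^{2+l}$, local-in-time existence, uniqueness, and $H^{2+l}$-regularity follow from Schauder estimates for second-order parabolic equations under Neumann conditions, as used in the references on deterministic spatial predator--prey systems. Nonnegativity (part (ii)) would then follow from the quasi-positivity of the reaction field: when $u=0$ the $u$-equation reduces to $\partial_t u=\alpha_1\Delta u$, and symmetrically for $v$, so the maximum principle keeps the solution in the nonnegative cone. For strict positivity (part (iii)) I would rewrite the $u$-equation as $\partial_t u=\alpha_1\Delta u+u\,g(t,x)$ with $g=a-bu-cv$ bounded, and apply the strong maximum principle together with the Hopf boundary lemma: if $u(\tau_k,\cdot)\not\equiv 0$ then $u>0$ on the whole open interval, and analogously for $v$.

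The a priori bounds (part (iv)) I would obtain by a comparison / invariant-rectangle argument with constants chosen uniformly in the state. Because $c,v\geq 0$, on every interval $u$ is a subsolution of the scalar logistic equation $\partial_t U=\alpha_1\Delta U+U(a-bU)$; any spatial constant $M_1>\max_i a(i)/b(i)$ is then a supersolution in both states, compatible with the Neumann condition, so $u\leq M_1$ is preserved once $\sup_{\bar E}u_0<M_1$. Feeding $u\leq M_1$ into the $v$-equation shows $v$ is a subsolution of $\partial_t V=\alpha_2\Delta V+V(-d+eM_1-fV)$, so any constant $M_2$ exceeding $\max_i(M_1 e(i)-d(i))/f(i)$ dominates $v$; the additional constants $\max_i a(i)/c(i)$ and $\sup_{\bar E}v_0$ appearing in the statement only enlarge $M_2$ and hence preserve the inequality. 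Since $M_1,M_2$ are taken larger than the relevant quantity in each of the two states, the rectangle $[0,M_1]\times[0,M_2]$ is invariant through every switch, so the bounds survive the patching and hold for all $t\geq 0$.

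With these uniform bounds the local solutions cannot blow up, so each extends up to $\tau_{k+1}$, and the endpoint $u(\tau_{k+1}^-,\cdot),v(\tau_{k+1}^-,\cdot)$ lies in $H^{2+l}(\bar E)$ with vanishing normal derivative, serving as a legitimate initial datum for the next phase; concatenating over $k$ and using $\tau_k\to\infty$ yields a global solution and closes part (i), with uniqueness inherited from the deterministic theory phase by phase. The \emph{main obstacle} is precisely this patching: one must verify that $H^{2+l}$-regularity and the Neumann compatibility are not lost at the switching instants, that switches do not accumulate, and above all that the comparison constants $M_1,M_2$ can be fixed once and for all \emph{independently of the random sequence of states}, so that the invariant region and the strong maximum principle can be applied uniformly across all phases.
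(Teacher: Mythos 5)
Your proposal is correct and follows essentially the same route as the paper: the paper's proof simply cites the deterministic theory of \cite{leung1978limiting} for each frozen system \eqref{e6+}, \eqref{e6-} and observes that, since \eqref{eq-main} switches path-by-path between these two systems and the bounds $M_1,M_2$ are independent of $\xi(t)$, the argument concatenates across the (a.s.\ locally finite) switching times. Your write-up just makes explicit the sample-pathwise patching, the invariant-rectangle comparison, and the strong maximum principle steps that the paper leaves implicit in that citation.
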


\begin{proof}
The proof of this theorem for deterministic systems \eqref{e6+} and \eqref{e6-} can be found in \cite{leung1978limiting}.
	It is noted that \eqref{eq-main} is switching system that switch (randomly based on the changes of $\xi(t)$) between two deterministic systems \eqref{e6+} and \eqref{e6-}, and $M_1$, $M_2$ are independent of $\xi(t)$. Therefore, the proof this theorem can follow \cite{leung1978limiting}.
\end{proof}

One of our main motivations comes from ecology, where ecologists seek to determine whether a particular ecological population is at risk of extinction or can persist over time since understanding these dynamics is crucial for conservation efforts and managing biodiversity. To address this, we introduce the following concepts.
\begin{defn}
	We say a specie with density $z(t,x)$ is extinct in strong sense if 
	$$\lim_{t\to\infty} \sup_Ez(t,x)=0 \text{ a.s.}$$
\end{defn}

\begin{defn}
	We say a specie with density $z(t,x)$ is persistent in average sense if 
	\begin{equation*}
		\liminf_{T\to\infty}\frac 1T\int_0^T \frac 1{|E|}\int_E z(t,x)dxdt>\delta>0,
	\end{equation*}
	for some $\delta$, which is independent of initial condition.
\end{defn}

We also want to study further the limit set behavior of systems.
Adapting the definition introduced in \cite{du2011dynamics}, we define $\omega$-limit set as follows.
\begin{defn}
	The $\omega$-limit set of the trajectory starting from an initial value $(u_0, v_0)$ is
	$$
	\Omega(u_0,v_0,\omega) =\displaystyle\bigcap\limits_{T>0}\bar{
		\bigcup\limits_{t>T}
		(u(t,\cdot,\omega), v(t,\cdot,\omega))},
	$$	
	with the closure being obtained under supremum norm in $C(\bar E)$,
	where $(u(t,\cdot,\omega), v(t,\cdot,\omega))$ is solution to \eqref{eq-main} with initial condition $(u_0,v_0)$.
	
\end{defn}

This definition generalizes the concept in \cite{du2011dynamics} to functional space. As remarked in \cite{du2011dynamics}, this type of $\omega$-limit set is different from the one in \cite{crauel1994attractors} but it is closest to that of an $\omega$-limit set for a deterministic dynamical system. In the case where $\Omega(u_0, v_0,\omega)$ is a.s constant, it is similar to the concept
of weak attractor and attractor given in \cite{mao2001attraction}. Although, in general, the $\omega$-limit set in this sense
does not have the invariant property, this concept is appropriate for our purpose of describing the
pathwise asymptotic behavior of the solution with a given initial value.

\section{Asymptotic Behavior}\label{sec:class}
This section investigates the long-term behavior of the system \eqref{eq-main}, with a particular focus on classifying conditions for extinction and persistence. To this end, we introduce a threshold parameter 
$\lambda$, whose sign is shown to determine these asymptotic properties. In Section \ref{sec:kpp}, we first study a random Fisher–KPP equation in detail. This equation not only plays a key role in the definition of 
$\lambda$ but is also of independent interest. Section \ref{subsec-class} then formally defines the threshold $\lambda$ and establishes that its sign characterizes extinction and persistence. Finally, Section \ref{sec-omega} describes the 
$\omega$-limit set of solutions to \eqref{eq-main}.
\subsection{Random Fisher-KPP equation}\label{sec:kpp}
To analyze the long-term behavior of the reaction-diffusion predator-prey system with random switching \eqref{eq-main}, it is essential to first examine the related boundary problem (i.e., the related system when $v(t,x)=0$). Specifically, this involves studying the random  Fisher-KPP equation. Consequently, this section is dedicated to investigating the random Fisher-KPP equation.

The deterministic Fisher-KPP equation (see e.g., \cite{fisher1937wave,kolmogorov1937etude}) is given by
\begin{equation}\label{e0}
\begin{cases}
\dfrac{\partial\tilde{u}(t,x)}{\partial t}=\alpha_1\Delta \tilde{u}(t,x) + \tilde{u}(t,x)(a- b\tilde{u}(t,x)),\\[1ex]
\tilde{u}(0,x)=u_0(x)\geq 0, x\in E,
\end{cases}
\end{equation}
with the boundary condition 
\begin{equation}\label{bc}\frac{\partial \tilde{u}}{\partial n}=0 \text{ on } [0,\infty)\times \partial E.
\end{equation}
 It is well-known that if $u_0\geq 0$ then \eqref{e0} has a unique solution $\tilde{u}(t,x)$ bounded by $\max\set{\frac ab, \sup_{\bar E} u_0(x)}$ on  $(0,\infty)\times \bar E$ and $\lim_{t\to\infty} U(t,x)=\frac ab$ uniformly on $\bar E$. 

Now, taking account the effect of random switching into the consideration, consider the switching system 
\begin{equation}\label{e1}
\begin{cases}
\dfrac{\partial \tilde{u}(t,x)}{\partial t}=\alpha_1(\xi(t))\Delta \tilde{u}(t,x) + \tilde{u}(t,x)(a(\xi(t))- b(\xi(t))\tilde{u}(t,x)),\\[1ex]
\tilde{u}(0,x)=u_0(x)\geq 0, x\in E,
\end{cases}
\end{equation}
with the boundary condition \eqref{bc}.
It is easy to see that \eqref{e1} has a unique solution $\tilde{u}(t,x)$ bounded by $\max\set{\max_{i\in\{+,-\}}\frac {a(i)}{b(i)}, \sup_{\bar E} u_0(x)}$ on  $(0,\infty)\times\bar E$.

Next, we investigate the $\omega$-limit set of the solution \eqref{e1}.   
Let
\begin{equation}\label {E2.2} 0 = \tau_0 < \tau_1 < \tau_2<...< \tau_n < ...
\end{equation}
be the jump times of $\xi(t)$. Put
\begin{equation}\label {E2.3} 
\sigma_{1} = \tau_1 - \tau_0, \; \sigma_{2}= \tau_2 - \tau_1,...,\sigma_{n} = \tau_n - \tau_{n-1}...
\end{equation}
$\sigma_{1}= \tau_{1} $ is the first exile from the initial state, $\sigma_{2}$ is the time the process $(\xi(t))_{t\geq 0}$ spends in the state into which it moves from the first state, etc. It is known that $(\sigma_{k})_{k=1}^\infty$ are independent in the condition of given sequence $(\xi(\tau_{k}))_{k=1}^\infty$.
Note that 
if $\xi_0 = +$ then $\sigma_{2n+1}$ has the exponential density $\alpha 1_{[0,\infty)}\exp(-\alpha t)$ and $\sigma_{2n}$ has the density $\beta 1_{[0,\infty)}\exp(-\beta t)$. Conversely, if $\xi_0 = -$ then $\sigma_{2n}$ has the exponential density $\alpha 1_{[0,\infty)}\exp(-\alpha t)$ and $\sigma_{2n+1}$ has the density $\beta 1_{[0,\infty)}\exp(-\beta t)$ (see \cite[vol. 2, pp. 217]{gihman2012controlled}). Here $1_{[0,\infty)}=1$ for $t\geq 0$ ($=0$ for $t<0$).
Moreover, let
$
\mathcal F^n_0 = \sigma(\tau_k: k\leq n); \mathcal F^\infty_n = \sigma(\tau_k -\tau_n: k > n),
$
then $(u(\tau_n,\cdot), v(\tau_n,\cdot))$ is $\mathcal F^n
_0$-measurable and if $\xi(0)$ is given then $\mathcal F^n
_0$ is independent of $\mathcal F^\infty_n$ .


It is seen that if  $y(t)$ is the solution of (spatial homogeneous) logistic equation with random switching
 \begin{equation}\label{e11}
\begin{cases}
\dfrac{dy(t)}{dt}= y(t)(a(\xi(t))- b(\xi(t))y(t)),\\
y(0)=y_0\geq 0,\\
\end{cases}
\end{equation}
then the function $\tilde{u}(t,x)=y(t), x\in \bar E$ is the solution of \eqref{e1} with the initial condition $u_0(x)\equiv y_0$. It is also known that $(y(t),\xi(t))$ is a Markov process and it has  a unique ergodic stationary distribution in $(0,\infty)\times\{+,-\}$, namely $\pi(dy,di)$. Further, by solving Fokker-Planck equation  we have
$\pi=\delta_{\{+\}}\mu^+(y)+\delta_{\{-\}}\mu^-(y),$
where
$$\begin{aligned}
\mu^+(y)=\theta\frac{\abs{a(+)-b(+)y}^{\frac{q(+)}{a(+)}-1}\abs{a(-)-b(-)y}^{\frac{q(-)}{a(-)}}}{y^{\frac{q(+)}{a(+)}+\frac{q(-)}{a(-)}+1}},\\
\mu^-(y)=\theta\frac{\abs{a(+)-b(+)y}^\frac{q(+)}{a(+)}\abs{a(-)-b(-)y}^{\frac{q(-)}{a(-)}-1}}{y^{\frac{q(+)}{a(+)}+\frac{q(-)}{a(-)}+1}},\end{aligned}$$
 and 
$$
\theta=\bigg\{\int_{\frac{a(+)}{b(+)}}^{ \frac{a(-)}{b(-)}}  \Big(\pi(+)\mu^+(y)+ \pi(-)\mu^-(y)\Big)dy\bigg\}^{-1}.
$$

\begin{lem}
For any $y_0>0$, the $\omega$-limit set $\Omega_y(y_0)$ of $y(t),t\geq 0$ is $\left[\frac{a(+)}{b(+)},\frac{a(-)}{b(-)}\right]$.
Where the $\omega$-limit set $\Omega_y(y_0)$ of ODE \eqref{e11} is defined by
$
\Omega_y(y_0,\omega) =\displaystyle\bigcap\limits_{T>0}\bar{
	\bigcup\limits_{t>T}
	y(t,\omega)}.
$
\end{lem}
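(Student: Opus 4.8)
The plan is to study the scalar switching logistic ODE \eqref{e11} directly, exploiting the fact that each constituent equation $\dot y = y(a(i)-b(i)y)$ is itself a one-dimensional logistic ODE whose solutions converge monotonically to the carrying capacity $a(i)/b(i)$. First I would record the elementary but crucial monotonicity/comparison facts: on any interval where $\xi(t)\equiv i$, the solution $y(t)$ is strictly increasing toward $a(i)/b(i)$ if $y<a(i)/b(i)$ and strictly decreasing toward it if $y>a(i)/b(i)$; in either case it cannot cross $a(i)/b(i)$. Consequently the interval $I:=\left[\frac{a(+)}{b(+)},\frac{a(-)}{b(-)}\right]$ is forward invariant for the switched flow, and from any $y_0>0$ the trajectory enters a neighborhood of $I$ in finite time and thereafter stays in (a slight enlargement of) $I$. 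This gives immediately that $\Omega_y(y_0)\subseteq I$, which is the easy inclusion.

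The substantive direction is $I\subseteq\Omega_y(y_0)$, i.e.\ showing that the trajectory comes arbitrarily close to \emph{every} point of the closed interval $I$ infinitely often, almost surely. The key mechanism is that the two carrying capacities $a(+)/b(+)$ and $a(-)/b(-)$ are distinct endpoints and the solution is attracted toward whichever endpoint is active. My plan is to argue as follows. Fix a target $y^*\in I$ and $\eps>0$. While $\xi=+$ the solution moves monotonically toward $a(+)/b(+)$, and while $\xi=-$ it moves monotonically toward $a(-)/b(-)$; by choosing a sojourn in state $i$ long enough, $y$ can be driven within $\eps$ of $a(i)/b(i)$, and by choosing an intermediate sojourn of a prescribed (finite, deterministic) length one can steer $y$ to within $\eps$ of the interior target $y^*$, since the flow is continuous and sweeps monotonically across $I$. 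Because the sojourn times $\sigma_k$ have exponential densities (as recorded in the excerpt following \eqref{E2.3}), the event that a given finite block of sojourn times falls in any prescribed positive-length window has strictly positive probability, and these blocks recur independently along the sequence (using the independence of $\mathcal F^n_0$ and $\mathcal F^\infty_n$ stated above). A Borel–Cantelli / second-Borel–Cantelli argument on these independent recurring events then forces the trajectory into the $\eps$-ball about $y^*$ infinitely often a.s. Taking a countable dense set of targets $y^*$ and a sequence $\eps\downarrow 0$, and intersecting the resulting full-probability events, yields $I\subseteq\Omega_y(y_0)$ a.s.

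I would organize the details through the embedded jump chain: set $y_n:=y(\tau_n)$ and analyze the map $y_n\mapsto y_{n+1}$ determined by a logistic flow in the active state over the random time $\sigma_{n+1}$. The exponential law of $\sigma_{n+1}$ and the explicit monotone dependence of the logistic solution on elapsed time let me show that, conditional on the current state and position, $y_{n+1}$ has positive probability of landing in any subinterval of $I$ that is reachable in one step; a bounded number of steps then reaches any point of $I$. The recurrence is closed off by the independence structure already isolated in the excerpt. The main obstacle I anticipate is \emph{uniformity}: I must ensure the positive lower bound on the probability of a steering block does not degenerate as the current position $y_n$ ranges over the attracting interval, and does not depend on the (random) past in a way that breaks the Borel–Cantelli independence. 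This is handled by working on the compact enlarged interval into which the trajectory has already entered, where the logistic flow maps and their time-derivatives are uniformly bounded away from degeneracy, giving a uniform positive lower bound on each block's probability; the stated independence of $\mathcal F^n_0$ and $\mathcal F^\infty_n$ then supplies genuinely independent trials. The existence of the ergodic stationary distribution $\pi$ with support $[a(+)/b(+),a(-)/b(-)]\times\{+,-\}$ recorded just above the lemma can also be invoked as an alternative route to the recurrence, since full support of the invariant measure combined with ergodicity forces the trajectory to visit every neighborhood of every point of $I$ with the correct frequency, again yielding $I\subseteq\Omega_y(y_0)$ a.s.
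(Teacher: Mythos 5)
Your proposal is correct and takes essentially the same route as the paper: fixed-state logistic convergence to the two equilibria $\frac{a(+)}{b(+)}$ and $\frac{a(-)}{b(-)}$ combined with a Borel--Cantelli recurrence argument for two consecutive sojourn times in prescribed windows (a long sojourn to approach an endpoint, then a sojourn in $(s_3,s_4)$ to sweep across the interval toward the target), which is exactly the event $A_k=\{s_1<\sigma_k<s_2,\;s_3<\sigma_{k+1}<s_4\}$ the paper handles by invoking the argument of \cite[Theorem 2.2]{du2011dynamics}. Your explicit treatment of the uniformity of the steering bounds and the independence structure merely fills in the details the paper delegates to that citation.
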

\begin{proof}
	It is known that if $\xi(t)=+$ (resp. $-$) for all $t\geq 0$, then the corresponding deterministic of \eqref{e11} converges to the equilibrium $\frac{a(+)}{b(+)}$ (resp. $\frac{a(-)}{b(-)}$). On the other hand,
for any $k\in\mathbb N$ and $0<s_1 <s_2 $, $0<s_3 <s_4$,
if we	let 
	$A_k:=\{\omega: s_1<\sigma_{k}<s_2, s_3<\sigma_{k+1}<s_4\},$
then using arguments in the proof of \cite[Theorem 2.2]{du2011dynamics}, we obtain that
	${\mathbb P}\biggl(\bigcap_{k=1}^{\infty}\bigcup_{i=k}^{\infty}A_i\biggl)=1.$
	As a result,
	\begin{equation*}
		{\PP}\{\omega: s_1<\sigma_{n}<s_2, s_3<\sigma_{n+1}<s_4\mbox{ i.o. of}\;n\}=1.
	\end{equation*}
	Therefore, we can prove that $\Omega_y(y_0,\omega)=[\frac{a(+)}{b(+)},\frac{a(-)}{b(-)}]$.
\end{proof}
 Denote 
by $\K^*$ the subset  of $C(\bar E)$ consisting of the constant functions valued in $\left[\frac{a(+)}{b(+)},\frac{a(-)}{b(-)}\right]$.
\begin{prop}\label{prop2.1-1}
For any $u_0\geq0, u_0\not\equiv 0$, we have 
 the  $\Omega_{\tilde{u}}(u_0)=\K^*$, where $\Omega_{\tilde{u}}(u_0)$ is the $\omega$-limit set  of solution $\tilde{u}(t,\cdot)$ to \eqref{e1}, starting from $u_0$.
\end{prop}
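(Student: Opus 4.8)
The plan is to reduce the spatial equation \eqref{e1} to the spatially homogeneous logistic equation \eqref{e11} by a \emph{pathwise} comparison argument and then invoke the preceding lemma. Fix a sample path $\omega$ in the full-measure set on which that lemma holds. Since $u_0\not\equiv0$, Theorem \ref{thm-pre}(iii) guarantees that $\tilde u(t_0,\cdot)$ is strictly positive for any fixed $t_0>0$; shifting the initial time to $t_0$ we may assume $m_0:=\min_{\bar E}\tilde u(t_0,\cdot)>0$ and set $M_0:=\max_{\bar E}\tilde u(t_0,\cdot)$. Recall also that $\Omega_{\tilde u}(u_0)$ is precisely the set of sup-norm subsequential limits of $\tilde u(t,\cdot)$ as $t\to\infty$.

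First I would set up the comparison. Any spatially constant function solving the scalar ODE \eqref{e11} (with the realized switching path, $\Delta$ acting trivially) is simultaneously a sub- and a supersolution of \eqref{e1} under the Neumann condition \eqref{bc}. Letting $\hat y_{\min},\hat y_{\max}$ denote the solutions of \eqref{e11} on $[t_0,\infty)$ started from $m_0,M_0$, the parabolic maximum principle — applied on each switching interval $[\tau_k,\tau_{k+1})$ and patched across the jump times by continuity of the trajectories — yields
\[
\hat y_{\min}(t)\le \tilde u(t,x)\le \hat y_{\max}(t),\qquad \forall\,(t,x)\in[t_0,\infty)\times\bar E.
\]

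The heart of the matter is to prove $\hat y_{\max}(t)-\hat y_{\min}(t)\to0$, i.e. that the two ODE trajectories contract onto each other even though each keeps oscillating inside $\bigl[\tfrac{a(+)}{b(+)},\tfrac{a(-)}{b(-)}\bigr]$. I would introduce $z(t):=\ln\hat y_{\max}(t)-\ln\hat y_{\min}(t)\ge0$ and compute $\dot z=-b(\xi(t))\bigl(\hat y_{\max}-\hat y_{\min}\bigr)=-b(\xi(t))\,\hat y_{\min}\,(e^{z}-1)\le0$, so $z$ decreases to some $z_\infty\ge0$. Because $\hat y_{\min}$ solves \eqref{e11} from a positive datum, the lemma gives $\liminf_{t}\hat y_{\min}(t)=\min\{\tfrac{a(+)}{b(+)},\tfrac{a(-)}{b(-)}\}>0$; combined with $b(\xi)\ge b_{\min}>0$ this forces $z_\infty=0$, for otherwise $\dot z$ would remain bounded away from $0$ and drive $z\to-\infty$. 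Hence $\hat y_{\max}-\hat y_{\min}\to0$, and by the sandwich the spatial oscillation satisfies $\operatorname{osc}_x\tilde u(t,\cdot)\le \hat y_{\max}(t)-\hat y_{\min}(t)\to0$. This homogenization step is the one I expect to be the main obstacle: a direct energy/Poincaré estimate would need the diffusion to dominate the reaction, whereas routing the oscillation decay through the ODE contraction covers arbitrary positive diffusion strengths.

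Finally I would identify the limit values. Writing $\bar{\tilde u}(t)=\tfrac1{|E|}\int_E\tilde u(t,x)\,dx$, the sandwich gives $|\bar{\tilde u}(t)-\hat y_{\min}(t)|\le \hat y_{\max}(t)-\hat y_{\min}(t)\to0$, so the sets of subsequential limits of $\bar{\tilde u}(t)$ and of $\hat y_{\min}(t)$ coincide; by the lemma the latter is exactly $\bigl[\tfrac{a(+)}{b(+)},\tfrac{a(-)}{b(-)}\bigr]$. For $\Omega_{\tilde u}(u_0)\subseteq\K^*$: any $\phi\in\Omega_{\tilde u}(u_0)$ is a sup-norm limit $\tilde u(t_n,\cdot)\to\phi$, so $\operatorname{osc}\phi=\lim_n\operatorname{osc}_x\tilde u(t_n,\cdot)=0$, whence $\phi\equiv\kappa$ is constant with $\kappa=\lim_n\bar{\tilde u}(t_n)\in\bigl[\tfrac{a(+)}{b(+)},\tfrac{a(-)}{b(-)}\bigr]$. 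Conversely, given $\kappa$ in this interval, choose $t_n\to\infty$ with $\hat y_{\min}(t_n)\to\kappa$; then
\[
\|\tilde u(t_n,\cdot)-\kappa\|_\infty\le\operatorname{osc}_x\tilde u(t_n,\cdot)+|\bar{\tilde u}(t_n)-\kappa|\longrightarrow0,
\]
so the constant function $\kappa$ lies in $\Omega_{\tilde u}(u_0)$. Together these inclusions give $\Omega_{\tilde u}(u_0)=\K^*$.
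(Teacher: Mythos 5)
Your proposal is correct and follows essentially the same route as the paper: sandwich $\tilde u(t,\cdot)$ between two solutions of the switched logistic ODE \eqref{e11} started from $\min$ and $\max$ of the (strictly positive) initial data via the comparison principle, show the two ODE trajectories contract onto each other pathwise, and then transfer the lemma's description $\Omega_y=\bigl[\tfrac{a(+)}{b(+)},\tfrac{a(-)}{b(-)}\bigr]$ to $\tilde u$. The only cosmetic difference is the contraction step: the paper observes that $\tfrac1{y_1}-\tfrac1{y_2}$ satisfies the linear equation $\tfrac{d}{dt}\bigl(\tfrac1{y_1}-\tfrac1{y_2}\bigr)=-a(\xi(t))\bigl(\tfrac1{y_1}-\tfrac1{y_2}\bigr)$ and hence decays since $a(\xi)\ge a_{\min}>0$, whereas you run the same idea through the log-ratio $z=\ln\hat y_{\max}-\ln\hat y_{\min}$ and a monotonicity argument.
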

\begin{proof}
By maximum principle (see e.g., \cite[Theorem 2]{conway1977comparison}) we see that for all $t> 0$,
$
\inf_{x\in \bar E} \tilde{u}(t,x)>0. $
Therefore, without loosing the generality, we can suppose that $\inf_{x\in \bar E} \tilde{u}(0,x)>0$. 

Let $y_1(t)$ be the solution of \eqref{e11} with $y_1(0)=\min_{x\in E} \tilde{u}(0,x)>0$ and $y_2(t)$ be the solution of \eqref{e11} with $y_2(0)=\max_{x\in E} \tilde{u}(0,x)>0$. Then, 
by comparison principle (see e.g., \cite{conway1977comparison}), we have
$$y_1(t)\leq \tilde{u}(t,x)\leq y_2(t),\;\forall x\in E.$$
Now, we will prove that with probability 1,
\begin{equation}\label{e11b}\lim_{t\to\infty}(y_1(t)-y_2(t))=0. \end{equation}
Indeed, we have  
$\frac d{dt}\left(\frac 1{y_1(t)}-\frac 1{y_2(t)}\right)=-a(\xi_t)\left(\frac 1{y_1(t)}-\frac 1{y_2(t)}\right),$
which implies that
$\lim_{t\to\infty}\left(\frac 1{y_1(t)}-\frac 1{y_2(t)}\right)=0.$
On the other hand, 
$\left|\frac 1{y_1(t)}-\frac 1{y_2(t)}\right|
\geq \frac{\left|{y_1(t)}-{y_2(t)}\right|}{\big(\max\{\max_{i\in\{+,-\}}\frac {a(i)}{b(i)}, y_1(0),y_2(0)\}\big)^2},$
where we used that fact that almost surely
the solution to \eqref{e11} is bounded by
$\max\{\max_{i\in\{+,-\}}\frac {a(i)}{b(i)}, y_0\}$. 
Therefore, 
$\lim_{t\to\infty}(y_1(t)-y_2(t))=0.$
Because of $\Omega_{y_1}=\Omega_{y_2}=\big[\frac{a(+)}{b(+)},\frac{a(-)}{b(-)}\big]$ and \eqref{e11b}, we have the proof.
\end{proof}
\subsection{Extinction and Persistence of Random Reaction-Diffusion Prey-predator Equation}\label{subsec-class}

We want to (fully) classify whenever  $v(t,x)$ (or $u(t,x)$) is extinct and when it is permanent. We will introduce a threshold number and show that its sign will determine 
persistence and extinction.


Recall that $\pi(dy,di)$ is the invariant measure of $(y(t),\xi(t))$, which is the solution to \eqref{e11}.
Denote 
\begin{equation}\label{e8}
	\begin{aligned} 
		\lambda=\int_{(0,\infty)\times\{+,-\}}(-d(i) +e(i)y)\pi(dy,di).
	\end{aligned}
\end{equation}
It is seen that $\lambda$ is well defined due to the boundedness of $y(t)$.
The sign of $\lambda$ is shown to fully classify the longtime behavior of the random reaction-diffusion system \eqref{eq-main}.
In particular, Theorem \ref{thm-ext} shows that if 
$\lambda<0$, the predator population goes extinct, while the prey density converges to the solution of a Fisher–KPP equation. In contrast, when 
$\lambda>0$, both species persist, as established in Theorem \ref{thm-per}.

\begin{thm}\label{thm-ext}
	 If $\lambda <0$ then 
	\eqref{eq-main} is extinct in strong sense with exponential rate, that means, with probability 1,
	\begin{equation}\label{e3.10}
		\lim_{t\to\infty} \dfrac{\ln(\sup_Ev(t,x))}t\leq\lambda<0.
		\end{equation}
Further,  
	\begin{equation}\label{e3.11}\lim_{t\to\infty} \sup_E(\tilde{u}(t,x)-u(t,x))=0,\end{equation}
	where $\tilde{u}$ is the solution to random Fisher-KPP equation \eqref{e1}.
\end{thm}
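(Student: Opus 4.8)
The plan is to prove the two assertions separately, reducing each to a scalar estimate for the logistic switching ODE \eqref{e11} via comparison. First I would establish \eqref{e3.10} by dominating the predator density by a spatially homogeneous process whose Lyapunov exponent is computable by ergodicity. Since $v\geq 0$, the prey $u$ is a subsolution of the random Fisher--KPP equation \eqref{e1}, so by the comparison principle (\cite{conway1977comparison}) $u(t,x)\leq \tilde u(t,x)\leq y_2(t)$, where $y_2$ solves \eqref{e11} started from $\sup_E u_0$ (a spatially constant supersolution of Fisher--KPP). Then the constant-in-space function $\phi(t)$ solving $\dot\phi=\phi(-d(\xi(t))+e(\xi(t))y_2(t))$ with $\phi(0)=\sup_E v_0$ is a supersolution of the $v$-equation, because $\phi(-d+eu-f\phi)\leq\phi(-d+ey_2)=\dot\phi$ (using $\phi\geq0$, $u\leq y_2$, and $-f\phi^2\leq0$); hence $\sup_E v(t,x)\leq\phi(t)$. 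Writing $\ln\phi(t)=\ln\phi(0)+\int_0^t(-d(\xi(s))+e(\xi(s))y_2(s))\,ds$ and invoking the ergodic theorem for the uniquely ergodic process $(y(t),\xi(t))$ with invariant measure $\pi$, the time average converges a.s.\ to $\lambda$, which gives \eqref{e3.10}; in particular $\phi(t)\to0$ exponentially.

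For \eqref{e3.11} the naive route of analyzing $w=\tilde u-u$ directly fails, since $w$ solves $\partial_t w=\alpha_1\Delta w+(a-b(\tilde u+u))w+cuv$ and its zeroth-order coefficient $a-b(\tilde u+u)$ need not be negative, so decay is not immediate. Instead I would trap $u$ between two ODE profiles. Using $v\leq\phi$, the prey $u$ is a supersolution of the logistic equation with the extra decaying drift $-c\phi$; letting $\hat y_1$ solve $\dot{\hat y}_1=\hat y_1(a-b\hat y_1-c\phi)$ started (at a time $t_0>0$ where Theorem \ref{thm-pre}(iii) guarantees $\inf_E u(t_0,\cdot)>0$) from $\inf_E u(t_0,\cdot)$, the comparison principle yields the sandwich $\hat y_1(t)\leq u(t,x)\leq\tilde u(t,x)\leq y_2(t)$. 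Consequently $0\leq\sup_E(\tilde u-u)\leq y_2-\hat y_1$, and it suffices to show $y_2(t)-\hat y_1(t)\to0$.

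This last step is a scalar estimate amenable to the linearizing substitution already used in Proposition \ref{prop2.1-1}. Setting $g=\tfrac1{\hat y_1}-\tfrac1{y_2}$, a direct computation gives $\dot g=-a(\xi(t))\,g+c(\xi(t))\phi(t)$; since $a(\xi(t))\geq a_{\min}>0$ and $\phi(t)\to0$ exponentially from the first part, Duhamel's representation shows $g(t)\to0$. Because $\hat y_1,y_2$ are bounded above and eventually bounded below by positive constants, $y_2-\hat y_1=\hat y_1 y_2\,g\to0$, proving \eqref{e3.11}.

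I expect the main obstacle to be exactly this second assertion: the diffusion term destroys the clean $1/u$-linearization at the PDE level, so the crux is recognizing that the vanishing predation pressure can be absorbed into scalar lower/upper logistic profiles whose gap closes, thereby squeezing the spatially heterogeneous prey density onto the (oscillating) logistic trajectory. Secondary care is needed to justify the ergodic limit along the non-stationary trajectory $y_2$ via unique ergodicity of $(y,\xi)$, and to initiate all comparisons at a positive time $t_0$ so that strict positivity makes the lower profile $\hat y_1$ nontrivial.
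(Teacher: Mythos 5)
Your proposal is correct and takes essentially the same route as the paper: dominate $u$ by the switching logistic ODE, dominate $v$ by a spatially homogeneous supersolution whose Lyapunov exponent equals $\lambda$ by ergodicity of $(y(t),\xi(t))$, then for \eqref{e3.11} sandwich $u$ between a lower logistic profile with decaying predation term and an upper logistic solution, closing the gap via the reciprocal substitution and Duhamel. The only (harmless) slip is that your equation for $g$ should read $g'=-a(\xi(t))g+c(\xi(t))\phi(t)/\hat y_1(t)$, with the extra factor $1/\hat y_1(t)$ exactly as in the paper's computation of $z(t)$; since $\hat y_1$ is eventually bounded below by a positive constant, the conclusion is unaffected.
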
 
\begin{proof} 
	Let $y(t)$  be the solution of the spatial homogeneous equation \eqref{e11}
	with $y(0)=\max_{x\in\Omega}u(0,x).$ Then, with probability 1,
	$$u(t,x)\leq y(t) \text{ for all }x\in E.$$
	Therefore, 
	$$\frac{\partial v(t,x)}{\partial t}\leq  \alpha_2(\xi(t))\Delta v(t,x)+ v(t,x)(-d(\xi(t))+ e(\xi(t))y(t)).$$
	By comparison theorem, one has
	$v(t,x)\leq \bar V(t)\text{ a.s.},$
	where $\bar V(t)$ is the solution of the spatial homogeneous equation 
	$$\frac{d \bar V(t)}{d t}= \bar V(t)(-d(\xi(t))+ e(\xi(t))y(t)),$$
	with the initial condition $\bar V(0)=\sup_{\bar E} v(0,x)$.
By ergodicity of $(V(t),\xi(t))$, 
	$$
	\begin{aligned}
	\lim_{t\to\infty}\frac {\ln \bar V(t)}t&=\lim_{t\to\infty}\frac 1t\int_0^t(-d(\xi(s))+ e(\xi(t))y(s))ds\\
	&=\int_{(0,\infty)\times\{+,-\}}(-d(i) +e(i)y)\pi(dy,di)=\lambda<0\text{ a.s.}.
	\end{aligned}
	$$
	It follows that 
	$$	\lim_{t\to\infty} \dfrac{\ln(\sup_Ev(t,x))}t\leq \lim_{t\to\infty}\frac {\ln \bar V(t)}t=\lambda<0\text{ a.s.}$$
Thus, \eqref{e3.10} is proved.

By  \eqref{e3.10},  for any $\omega$ being outsite of a null set, we can find  $T=T(\omega)>0$ 
such that
$\sup_Ev(t,x))\leq \frac{e^{\frac\lambda 2t}}{c_{\max}}<a_{\min} \;\; \forall\;\; t\geq T.$
Therefore, 
$$
\begin{aligned}
\alpha_1(\xi(t))\Delta u+u(a(\xi(t))-e^{\frac\lambda 2t}-b(\xi(t))u\leq \frac{\partial u}{\partial t}\leq \alpha_1(\xi(t))\Delta u+u(a(\xi(t))-b(\xi(t))u).
\end{aligned}
$$
Let $\hat y(t)$ be the solution of the equation $\frac{d\hat y(t)}{dt}=\hat y(t)(a(\xi(t))-e^{\frac\lambda 2t}-b(\xi(t))\hat y(t))$ for $t\geq T$ with the initial condition $\hat y(T)= \inf_E u(T,x)$ and $y(t)$ be the solution of \eqref{e11} with the initial condition $y(T)= \sup_E u(T,x)$. Then, by comparison theorem, we have
\begin{equation}\label{e3.12}\hat y(t)\leq u(t,x)\leq\tilde{u}(t,x)\leq y(t), \;\forall x\in E,\; \forall  t\geq T.
	\end{equation}
Put $z(t)=\frac 1{\hat y(t)}-\frac 1{y(t)}$. Direct computations show that
$\frac {dz(t)}{dt}=-a(\xi(t))z(t)+\frac{e^{\frac\lambda 2t}}{\hat y(t)},$
which implies that
$z(t)=e^{-A(t)}\left(z(T)+\int_T^te^{A(s)}\frac{e^{\frac\lambda 2s}}{\hat y(s)}\right),$
where $A(t)=\int_T^ta(\xi(s)ds.$ Noting that both $\hat y(t)$ and $y(t)$  are bounded below and above by positive constants and $\lim_{t\to\infty} A(t)=\infty$  we follow that
$\lim_{t\to\infty}z(t)=0.$
Therefore, it is easily seen that
$\lim_{t\to\infty}(y(t)-\hat y(t))=0.$
This together with \eqref{e3.12} completes the proof.
\end{proof}

\begin{thm}\label{thm-per}
	If $\lambda>0$ then for any initial condition $u_0\geq 0$, $v_0\geq0$, not identical to $0$,
	\eqref{eq-main} is permanent in average sense. That means
			\begin{equation}\label{e3.2}
		\liminf_{T\to\infty}\frac 1T\int_0^T \frac 1{|E|}\int_E u(t,x)dxdt>\delta_1>0,\text{ and }
	\end{equation}
	\begin{equation}\label{e3.3}
		\liminf_{T\to\infty}\frac 1T\int_0^T \frac 1{|E|}\int_E v(t,x)dxdt>\delta_2>0,
	\end{equation}
	where $\delta_1,\delta_2$ are positive constants, which are independent of initial conditions $(u_0,v_0)$.
\end{thm}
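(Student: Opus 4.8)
\emph{Proposal.} The plan is to reduce the PDE system to a pair of one–dimensional averaged inequalities and then run an ``invasion'' argument governed by the sign of $\lambda$. Write $\bar u(t)=\tfrac1{|E|}\int_E u(t,x)\,dx$, $\bar v(t)=\tfrac1{|E|}\int_E v(t,x)\,dx$, $\overline{uv}=\tfrac1{|E|}\int_E uv\,dx$, and $\overline{\ln u}(t)=\tfrac1{|E|}\int_E\ln u(t,x)\,dx$ (well defined for $t>0$ by the strict positivity in Theorem \ref{thm-pre}(iii)). Integrating each equation of \eqref{eq-main} over $E$ and using the Neumann condition \eqref{e3} makes the Laplacian terms drop, while integrating the logarithmic version by parts produces a nonnegative Dirichlet term; hence, a.s.\ for $t>0$,
\begin{equation*}
\frac{d}{dt}\,\overline{\ln u}\ \ge\ a(\xi(t))-b(\xi(t))\,\bar u-c(\xi(t))\,\bar v,\qquad \frac{d}{dt}\,\overline{\ln v}\ \ge\ -d(\xi(t))+e(\xi(t))\,\bar u-f(\xi(t))\,\bar v .
\end{equation*}
These, together with the uniform bounds $0\le\bar u\le M_1$, $0\le\bar v\le M_2$ from Theorem \ref{thm-pre}(iv) (which are independent of $(u_0,v_0)$), are the workhorses; the independence of $M_1,M_2$ from the data is the source of the required independence of $\delta_1,\delta_2$.

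The core is the predator bound \eqref{e3.3}. Time–averaging the second inequality and using $\overline{\ln v}(T)\le\ln M_2$ (so the boundary term divided by $T$ tends to $0$) gives $\limsup_{T\to\infty}\tfrac1T\int_0^T\big(-d(\xi(t))+e(\xi(t))\bar u-f(\xi(t))\bar v\big)\,dt\le0$. I would then argue by contradiction: if $\liminf_T\tfrac1T\int_0^T\bar v\,dt$ were too small, the predator would be rare in average, and I claim the prey then tracks the random Fisher–KPP solution $\tilde u$ of \eqref{e1}. Indeed, by comparison (\cite{conway1977comparison}) $u\le\tilde u$, and the defect $w=\tilde u-u\ge0$ solves a linear equation forced by $c(\xi)uv$; controlling its space–time average by $\tfrac1T\int_0^T\bar v\,dt$ would let one replace $\bar u$ by $\bar{\tilde u}$ up to a small error. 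By Proposition \ref{prop2.1-1} and the ergodicity of $(y(t),\xi(t))$ for \eqref{e11}, $\tfrac1T\int_0^T\big(-d(\xi)+e(\xi)\bar{\tilde u}\big)\,dt\to\lambda>0$, so the displayed $\limsup$ would be at least $\lambda/2>0$, a contradiction. Hence $\liminf_T\tfrac1T\int_0^T\bar v\,dt\ge\delta_2>0$ with $\delta_2$ depending only on $\lambda$ and the parameter bounds.

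Prey persistence \eqref{e3.2} is then a clean consequence of the predator bound. Averaging the (non-logarithmic) predator equation $\tfrac{d\bar v}{dt}=-d(\xi)\bar v+e(\xi)\overline{uv}-f(\xi)\overline{v^2}$ over $[0,T]$, the left side equals $\tfrac{\bar v(T)-\bar v(0)}{T}\to0$, so
\begin{equation*}
\frac1T\int_0^T e(\xi(t))\,\overline{uv}(t)\,dt=\frac1T\int_0^T d(\xi(t))\,\bar v(t)\,dt+\frac1T\int_0^T f(\xi(t))\,\overline{v^2}(t)\,dt+o(1),
\end{equation*}
where $\overline{v^2}=\tfrac1{|E|}\int_E v^2\,dx$. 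Dropping the nonnegative $f$–term and using $\overline{uv}\le M_2\,\bar u$ from Theorem \ref{thm-pre}(iv) yields $e_{\max}M_2\,\liminf_T\tfrac1T\int_0^T\bar u\,dt\ge d_{\min}\,\liminf_T\tfrac1T\int_0^T\bar v\,dt\ge d_{\min}\delta_2$, hence \eqref{e3.2} with $\delta_1=d_{\min}\delta_2/(e_{\max}M_2)$, again independent of the initial data.

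I expect the main obstacle to be the deviation estimate ``predator rare in average $\Rightarrow$ $\bar u\approx\bar{\tilde u}$ in average.'' The reaction coefficient in the equation for $w=\tilde u-u$ is $a(\xi)-b(\xi)(\tilde u+u)$, and since $\tilde u$ lives near the switching values $a/b$ by Proposition \ref{prop2.1-1}, this coefficient oscillates around $0$; in fact its stationary average is exactly $0$ (it is the time–derivative of $\log$ of the logistic solution, whose average vanishes), so the prey–only dynamics is \emph{marginally} (neutrally) stable and one cannot absorb $\int w$ into the forcing $\int c\,uv$ by a naive Gr\"onwall/damping argument. I would resolve this either by a bootstrap/contradiction run over the random inter–jump windows $[\tau_n,\tau_{n+1})$, on which the coefficients are frozen and $\tilde u$ is pinned near $\K^*$, exploiting the self–limitation term $-b(\xi)u\,w$; or, more robustly, by recasting the step through occupation measures: if the predator average tended to $0$ the empirical measures would concentrate on the extinction set $\{v\equiv0\}$, whose only invariant measure is the Fisher–KPP/logistic law $\pi$, against which the predator growth rate is $\lambda>0$, contradicting the $\limsup\le0$ above.
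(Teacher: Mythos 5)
Your setup is the right one---the averaged logarithmic identities you derive are exactly the paper's starting point---but the proposal has a genuine gap at its center: the predator bound \eqref{e3.3} rests on the deviation estimate ``predator rare in average $\Rightarrow$ $\bar u\approx\bar{\tilde u}$ in average,'' which you explicitly leave unproved. You correctly diagnose why it is hard (the defect equation for $w=\tilde u-u$ has reaction coefficient $a(\xi)-b(\xi)(\tilde u+u)$ with vanishing stationary average, so no damping is available for a Gr\"onwall argument), but the two repairs you sketch---a bootstrap over inter-jump windows, or an occupation-measure/invariant-measure argument on the extinction set---are not carried out, and neither is routine: the occupation-measure route in particular would require a tightness and invariance analysis for the hybrid PDE semiflow that the paper never needs. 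As submitted, the contradiction argument for \eqref{e3.3} is therefore incomplete.

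What is striking is that the missing estimate is already contained in an inequality you wrote down. The paper compares $u$ with the \emph{ODE} solution $y(t)$ of \eqref{e11} started at $\max_E u_0$ (so $u\leq y$ pointwise and $y-u\geq 0$), rather than with the PDE solution $\tilde u$. Averaging the prey logarithmic identity, using $\limsup_T\frac 1T\int_E\ln u(T,x)dx\leq 0$ and $\frac 1T\int_0^T(a(\xi)-b(\xi)y)dt=\frac 1T\ln\big(y(T)/y(0)\big)\to 0$, yields precisely the control you wanted: the time-averaged deficit $b_{\min}\overline{(y-u)}$ is bounded by $c_{\max}$ times the predator average (this is \eqref{eq-1-2}). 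The predator logarithmic identity plus ergodicity of $(y,\xi)$ gives the companion bound \eqref{eq-1-1}, and then---instead of any contradiction---multiplying \eqref{eq-1-1} by $b_{\min}$, \eqref{eq-1-2} by $e_{\max}$ and adding cancels the deficit term and delivers the explicit constant $\delta_2=\lambda b_{\min}/(f_{\max}b_{\min}+c_{\max}e_{\max})$, manifestly independent of $(u_0,v_0)$. Two smaller points: your claim that $M_1,M_2$ are ``independent of the data'' is false as stated (Theorem \ref{thm-pre}(iv) takes $M_1>\sup_E u_0$, $M_2>\sup_E v_0$), so your constant $\delta_1=d_{\min}\delta_2/(e_{\max}M_2)$ inherits a dependence on $v_0$ unless you first pass to the asymptotic absorbing bounds; the paper's route to \eqref{e3.2}, averaging \emph{both} non-logarithmic equations to get $a_{\max}\overline{\bar u}\geq c_{\min}\overline{\overline{uv}}$ and $d_{\min}\overline{\bar v}\leq e_{\max}\overline{\overline{uv}}$, gives $\delta_1=c_{\min}d_{\min}\delta_2/(a_{\max}e_{\max})$ with no such dependence. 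Your prey-persistence step is otherwise sound and is essentially the paper's, minus one of the two averaged equations.
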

\begin{proof}
	By maximum principle (see e.g., \cite[Theorem 2]{conway1977comparison}) we see that for all $t> 0$,
	$
	\inf_{x\in \bar E}u(t,x)>0$, $\inf_{x\in \bar E}v(t,x)>0$.
	Therefore, without loosing the generality, we can suppose that $\inf_{x\in \bar E} u_0(x)>0$, $\inf_{x\in \bar E} v_0(x)>0$.

We will prove \eqref{e3.3} first.	We have from direct calculations and divergence theorem that
	\begin{equation}
		\begin{aligned}
			\frac d{dt}\int_E\ln v(t,x)dx
			=&\int_E \frac{\alpha_2(\xi(t))\Delta v(t,x)}{v(t,x)}dx+\int_E (-d(\xi(t))+e(\xi(t))u(t,x)-f(\xi(t))v(t,x))dx\\
			=&\alpha_2(\xi(t))\int_E \frac{\nabla v(t,x)}{v^2(t,x)}dx+|E| (-d(\xi(t))+e(\xi(t))y(t))\\
			&+\int_E e(\xi(t))\big(u(t,x)-y(t)\big)dx-\int_E f(\xi(t))v(t,x)dx.
		\end{aligned}
	\end{equation}
	This combines with the fact that $\limsup_{t\to\infty}\frac 1t\int_E\ln v(t,x)dx\leq 0$ and that
	$$
	\lim_{T\to\infty}	\frac 1T\int_0^T(-d(\xi(t))+e(\xi(t))y(t))dt=\lambda,
	$$
	we have
	\begin{equation}
		\liminf_{T\to\infty}	\frac 1T\int_0^T\Big(\int_E e(\xi(t))\big(y(t)-u(t,x)\big)dx+\int_E f(\xi(t))v(t,x)dx\Big)dt\geq |E|\lambda.
	\end{equation}
	This implies
	\begin{equation}\label{eq-1-1}
		\liminf_{T\to\infty}	\frac 1T\int_0^T\Big(e_{\max}\int_E \big(y(t)-u(t,x)\big)dx+f_{\max}\int_E v(t,x)dx\Big)dt\geq |E|\lambda.
	\end{equation}
	On the other hand, we have
	\begin{equation}\label{eq-317}
		\begin{aligned}
			\frac d{dt}\int_E\ln u(t,x)dx
			=&\int_E \frac{\alpha_1(\xi(t))\Delta u(t,x)}{u(t,x)}dx+\int_E (a(\xi(t))-b(\xi(t))u(t,x)-c(\xi(t))v(t,x))dx\\
			=&\alpha_1(\xi(t))\int_E \frac{\nabla u(t,x)}{u^2(t,x)}dx+|E| (a(\xi(t))-b(\xi(t))y(t))\\
			&+\int_E b(\xi(t))\big(y(t)-u(t,x)\big)dx-\int_E c(\xi(t))v(t,x)dx.
		\end{aligned}
	\end{equation}
	This combines with the fact that $\limsup_{t\to\infty}\frac 1t\int_E\ln u(t,x)dx\leq 0$ and that
	$$
	\lim_{T\to\infty}	\frac 1T\int_0^T(a(\xi(t))-b(\xi(t))y(t))dt=\lim_{t\to\infty}\frac 1t\ln y(t)=0,
	$$
	we have
	\begin{equation}
		\liminf_{T\to\infty}\frac 1T\int_0^T\Big( \int_E c(\xi(t))v(t,x)dx-\int_Eb(\xi(t))\big(y(t)-u(t,x)\big)dx\Big)dt\geq 0.
	\end{equation}
	Therefore, one has
	\begin{equation}\label{eq-1-2}
		\liminf_{T\to\infty}\frac 1T\int_0^T\Big( c_{\max}\int_E v(t,x)dx-b_{\min}\int_E\big(y(t)-u(t,x)\big)dx\Big)dt\geq 0.
	\end{equation}
	Multiplying both sides of \eqref{eq-1-1} and \eqref{eq-1-2} with $b_{min}$ and $e_{max}$, and adding side by side, we have
	\begin{equation}
		\liminf_{T\to\infty}\frac 1T\int_0^T\frac 1{|E|}\int_Ev(t,x)dxdt\geq \frac{\lambda b_{\min}}{f_{\max}b_{\min}+c_{\max}e_{\max}}>0.
	\end{equation}

	Next, we prove \eqref{e3.2}.	We have from divergence theorem that
		\begin{equation}\label{eq-323}
			\begin{aligned}
					0&=\liminf_{T\to\infty}	\frac 1T\Big(\int_E u(T,x)dx-\int_Eu_0(x)dx\Big)\\
					&=\liminf_{T\to\infty}\frac 1T\int_0^T\int_E (a(\xi(t))u(t,x)-b(\xi(t))u^2(t,x)-c(\xi(t))u(t,x)v(t,x))dxdt\\
&\leq \liminf_{T\to\infty}\frac 1T\int_0^T\Big(a_{\max}\int_E u(t,x)dx-c_{\min}\int_E u(t,x)v(t,x)dx\Big)dt					
									\end{aligned}
	\end{equation}
Doing similarly with $\int_Ev(t,x)dx$ we can get that
	\begin{equation}\label{eq-324}
d_{\min}\liminf_{T\to\infty}\frac 1T\int_0^T\int_E v(t,x)dxdt\leq e_{\max}\liminf_{T\to\infty}\frac 1T\int_0^T\int_Eu(t,x)v(t,x)dxdt.
	\end{equation}
Combining \eqref{eq-323} and \eqref{eq-324}, we get
\begin{equation}
	\liminf_{T\to\infty}\frac1T\int_0^T\int_Eu(t,x)dxdt\geq\frac{c_{\min}d_{\min}}{a_{\max} e_{\max}}\liminf_{T\to\infty}\frac1T\int_0^T\int_Ev(t,x)dxdt.
\end{equation}	
This together with \eqref{e3.3} proves \eqref{e3.2}.	
\end{proof}

\subsection{$\omega$-limit Set of Random Reaction-Diffusion Prey-predator Equation}\label{sec-omega}
In this section, we will go further to investigate the $\omega$-limit set of the solution.
We start with the following lemma regarding the behavior of deterministic system \eqref{eq-pde}.
\begin{lem}\label{lem-com}  
	Consider the deterministic system \eqref{eq-pde}.
	Let $K$ is a compact in $C(\bar E)$ such that $z\in K$ implies $z\geq \delta$ for some $\delta>0$.	
	\begin{itemize}
		\item[]{\rm (i)} If $\frac ab>\frac de$ then, \eqref{e1} has a unique  equilibrium point 
		$$(u^*,v^*)=\Big(\frac {af+cd}{bf+ce},\frac {ae-bd}{bf+ec}\Big),$$
		and
		 $\lim_{t\to\infty} (u(t,x),v(t,x))=(u^*,v^*)$ uniformly on $E$ and  on $K$. Further, for any $\eps>0$,  there is a $T=T(\eps)$ such that 
		$
		(u(t,\cdot),v(t,\cdot))\in U_\eps(u^*,v^*)$ for all $t\geq T$, 
		provided $u(0,x),v(0,x)\in  K,$
		where $U_\eps(u^*,v^*)$ is $\eps$-neighbor of constant functions $(u^*,v^*)$ in $C(\bar E)$.
		
		\item[]{\rm (ii)} If $\sup_Ev(t,x)\to 0$ then $u(t,x)\to\frac{a}{b}$ uniformly in $E$.
		
		\item[]{\rm (iii)} 
		If $\frac ab<\frac de$ then $\lim_{t\to\infty} (u(t,x),v(t,x))=\Big(\frac ab,0\Big)$ uniformly on $E$ and this convergence is uniform in initial condition $u_0\in K$, and $v_0\geq 0\in H^{2+l}(\bar E)$.
	\end{itemize} 
\end{lem}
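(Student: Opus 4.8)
The plan is to treat the coexistence case (i) by a Lyapunov functional and the two degenerate cases (ii), (iii) by comparison with the scalar Fisher--KPP equation \eqref{e0} and the spatially homogeneous kinetics. First I would solve the algebraic system $a-bu-cv=0$, $-d+eu-fv=0$ to obtain the unique point $(u^*,v^*)$ in the statement; the hypothesis $\frac ab>\frac de$ is precisely $ae-bd>0$, which forces $v^*>0$ (and then $u^*>0$ automatically), so the interior equilibrium exists and is unique.

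For global attraction in (i), I would introduce the Lyapunov functional
$$\mathcal V(t)=\int_E\Big[e\big(u-u^*-u^*\ln\tfrac{u}{u^*}\big)+c\big(v-v^*-v^*\ln\tfrac{v}{v^*}\big)\Big]dx.$$
By the maximum principle and the a priori bounds $u\leq M_1$, $v\leq M_2$, solutions starting in $K$ stay bounded and strictly positive, so $\mathcal V$ is well defined and nonnegative. Differentiating along \eqref{eq-pde} and integrating the Laplacian terms by parts using the Neumann condition \eqref{e3}, the reaction part reproduces the classical ODE computation; the key algebraic cancellation is that the weights $e$ and $c$ annihilate the indefinite cross term $(u-u^*)(v-v^*)$, leaving
$$\frac{d\mathcal V}{dt}=-eb\!\int_E(u-u^*)^2dx-cf\!\int_E(v-v^*)^2dx-e\alpha_1u^*\!\int_E\frac{|\nabla u|^2}{u^2}dx-c\alpha_2v^*\!\int_E\frac{|\nabla v|^2}{v^2}dx\leq 0.$$
Thus $\mathcal V$ decreases to a limit and its derivative is strictly negative off $(u^*,v^*)$; a LaSalle-type argument, together with precompactness of the orbit in $C(\bar E)$ provided by parabolic smoothing, forces $(u(t,\cdot),v(t,\cdot))\to(u^*,v^*)$ uniformly on $\bar E$.

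For (ii), since $\sup_E v(t,x)\to0$, for each $\eps>0$ there is a $T$ with $cv<\eps$ for $t\geq T$, so $u$ is squeezed,
$$\alpha_1\Delta u+u(a-\eps-bu)\leq u_t\leq\alpha_1\Delta u+u(a-bu),\qquad t\geq T,$$
and by the comparison principle the sub- and super-solutions are Fisher--KPP solutions of \eqref{e0} with growth rates $a-\eps$ and $a$, which converge uniformly to $\frac{a-\eps}{b}$ and $\frac ab$; letting $\eps\to0$ gives $u\to\frac ab$ uniformly. For (iii), $\frac ab<\frac de$ reads $-d+e\frac ab<0$; comparison with \eqref{e0} gives $\limsup_{t\to\infty}\sup_E u(t,x)\leq\frac ab$, so I fix $\eps>0$ small and $T$ with $u\leq\frac ab+\eps$ and $-d+e(\tfrac ab+\eps)=:-\gamma<0$ for $t\geq T$. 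Then $v_t\leq\alpha_2\Delta v-\gamma v$, and comparison with the homogeneous super-solution $\bar v(t)$ solving $\dot{\bar v}=-\gamma\bar v$, $\bar v(T)=\sup_E v(T,\cdot)\leq M_2$, yields $\sup_E v(t,x)\leq M_2 e^{-\gamma(t-T)}\to0$; part (ii) then gives $u\to\frac ab$. The fixed rate $\gamma$ and the bound $M_2$ make this decay uniform in $v_0$, while uniform convergence of \eqref{e0} on compact sets of data makes $T(\eps)$ uniform over $u_0\in K$.

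\textbf{Main obstacle.} The delicate point is in (i): upgrading the convergence of each individual trajectory (given by the Lyapunov/LaSalle argument) to convergence that is uniform over the compact set $K$ of initial data, with a single time $T(\eps)$ as claimed through $U_\eps(u^*,v^*)$. I would handle this by a contradiction-and-compactness argument: if $T(\eps)$ could not be chosen uniformly, one extracts (using compactness of $K$ and continuous dependence of solutions on initial data in $C(\bar E)$) initial data in $K$ whose trajectories fail to enter $U_\eps(u^*,v^*)$ by prescribed times, contradicting both continuous dependence and the already-established convergence of the limiting trajectory. Establishing the uniform lower bound (uniform persistence) that keeps $\mathcal V$ well defined and the orbit closure compact and bounded away from the extinction boundary is the technical heart of this step.
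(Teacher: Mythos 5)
Your proposal is correct, and the conclusions match the paper's, but your route through part (i) is self-contained where the paper's is citation-based. The paper performs no Lyapunov computation at all: it invokes \cite{leung1978limiting} for the trajectory-wise uniform convergence $(u(t,\cdot),v(t,\cdot))\to(u^*,v^*)$, and its only real work is the uniformity over $K$, obtained by an open-cover argument (around each $(\bar u,\bar v)\in K$ choose a time $t_{\bar u,\bar v}$ and a ball $U_{\delta_{\bar u,\bar v}}$ via continuous dependence, extract a finite subcover, set $T=\max_i t_{\bar u_i,\bar v_i}$). Your Volterra-type functional $\mathcal V$ with the weights $e,c$ cancelling the cross term is precisely the mechanism inside the cited reference, so you are re-proving Leung's theorem rather than diverging from it, and your contradiction-and-compactness treatment of $T(\eps)$ is equivalent to the paper's covering. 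What your approach buys: monotonicity of $\mathcal V$ plus parabolic smoothing yields the ``remains in $U_\eps$ for all $t\geq T$'' half of the claim, which the paper's argument tacitly needs (continuous dependence alone only controls finite time windows); what it costs is the uniform persistence lower bound you correctly flagged, which the citation sidesteps. Parts (ii) and (iii) coincide with the paper: it squeezes $u$ between Fisher--KPP solutions with rates $a-\eps$ and $a$, and for (iii) merely remarks that $v\to0$ as in Theorem \ref{thm-ext}; your explicit rate $-\gamma=-d+e(\frac ab+\eps)$ fills that in. One small caveat in your (iii): the bound $M_2$ of Theorem \ref{thm-pre} depends on $\sup_{\bar E}v_0$, so to get uniformity over all $v_0\geq0$ in $H^{2+l}(\bar E)$ you should first use the $-fv^2$ self-limitation (comparison with $\dot V=V(eM_1-fV)$, whose solutions enter a fixed bounded set in a time independent of $V(0)$) before running your linear exponential decay estimate.
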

\begin{proof}
We first prove (i). For each $\bar u,\bar v\in K$, 
	by \cite{leung1978limiting}, the solution $ (u(t,x),v(t,x))$ with the initial condition $ (u(0,x),v(0,x))=(\bar u,\bar v)$ satisfies 
	$$
	\lim_{t\to\infty} (u(t,x),v(t,x))\to (u^*,v^*)\text{ uniformly in }E,
	$$
	there is $t_{\bar u,\bar v}>0$ such that $(u(t,\cdot),v(t,\cdot))\in U_{\eps/2}(u^*,v^*)$ for any $t\geq t_{\bar u,\bar v}$.
	Therefore, there is a $\delta_{\bar u,\bar v}>0$ such that
	for any initial data $(u'_0,v'_0)\in U_{\delta_{\bar u,\bar v}}((\bar u,\bar v)$ we have  
	$(u(t,\cdot),v(t,\cdot))\in U_{\eps}(u^*,v^*)$ for all $t\geq t_{\bar u,\bar v}$.
	Let consider the open covering $\{U_{\delta_{\bar u,\bar v}}((\bar u,\bar v): \bar u,\bar v\in K\}$. Since $K$ is compact, there is a finite covering 
	$\{U_{\delta_{\bar u_i,\bar v_i}(\bar u_i,\bar v_i)}: i=1,2,...,N\}$ of $K$. Put $ T=\max_{i=1,\dots,N}t_{\bar u_i,\bar v_i}$ we have the proof.
	
Next, we prove (ii).
Let $\eps>0$ be arbitrary. There is $t_1$ such that $t>t_1$ then $v(t,x)<\frac{\eps}c.$ By comparison argument then $u_\eps(t,x)\leq u(t,x)\leq \tilde{u}(t,x)$ where $\tilde{u}$ is solution to the deterministic Fisher-KPP equation \eqref{e0} and $u_\eps$ is solution 
\begin{equation*}
	\begin{cases}
		\dfrac{\partial u_\eps(t,x)}{\partial t}=\alpha_1\Delta u_\eps(t,x) + u_\eps(t,x)(a-\eps- bu_\eps(t,x)),\;t\geq t_1,\\
		u_\eps(t_1,x)=u(t_1,x).
	\end{cases}
\end{equation*}
Therefore, we can obtain $u(t,x)\to \frac{a}{b}$ uniformly in $E$.

The part (iii) following the fact that $v(t,x)\to 0$ whenever $ae<bd$, which can be proved similarly as done in Theorem \ref{thm-ext}.
\end{proof}
Now, let $M=\max\{M_1,M_2\}$ where $M_1,M_2$ are given in Theorem \ref{thm-pre}. It is noted that due to embedding theorem, for any constant $\delta>0$, the space $\C_{\delta,M}$ defined by
$$\{(z_1,z_2)\in \text{closure}(H^{2+l}(\bar E))\times \text{closure}(H^{2+l}(\bar E)): \delta\leq z_1(x),z_2(x)\leq M,\;\forall x\}$$ is compact in $\C(\bar E)$.
\begin{thm}\label{thm-omega-ext}
	If $\lambda<0$, for any initial condition $(u_0,v_0)$ then $\Omega(u_0,v_0,\omega)=\K^*\times\{0\}$.
\end{thm}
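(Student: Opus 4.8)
The plan is to reduce the statement to the two facts already established for the regime $\lambda<0$: the componentwise asymptotics of Theorem \ref{thm-ext} and the Fisher--KPP $\omega$-limit computation of Proposition \ref{prop2.1-1}. These are then assembled into a statement about the \emph{joint} $\omega$-limit set of the pair $(u(t,\cdot),v(t,\cdot))$ in $C(\bar E)$. First I would fix a sample path $\omega$ in the full-measure set on which the conclusions of both results hold, and take $u_0\not\equiv 0$ (if $u_0\equiv 0$ then $u\equiv 0$ and the limit set degenerates, a trivial case handled separately). The boundedness from Theorem \ref{thm-pre}(iv), together with the $H^{2+l}$-regularity and the compact embedding $H^{2+l}(\bar E)\hookrightarrow C(\bar E)$, guarantees that the trajectory stays in a compact subset of $C(\bar E)$, so that $\Omega(u_0,v_0,\omega)$ is a nonempty closed set and every element is attained as a sup-norm limit along a sequence $t_n\to\infty$.

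The key observation is that the joint $\omega$-limit set decouples into a product. Since Theorem \ref{thm-ext} gives $\sup_E v(t,x)\to 0$, i.e.\ $v(t,\cdot)\to 0$ uniformly, the second coordinate of every limit point is forced to be the zero function; conversely, a pair $(z,0)$ belongs to $\Omega(u_0,v_0,\omega)$ if and only if $z$ is a sup-norm limit of $u(t_n,\cdot)$ for some $t_n\to\infty$, because $v(t_n,\cdot)\to 0$ holds automatically along any such sequence. Hence $\Omega(u_0,v_0,\omega)=\Omega_u(u_0,v_0,\omega)\times\{0\}$, where $\Omega_u$ denotes the $\omega$-limit set of the prey component alone.

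It then remains to identify $\Omega_u$ with $\K^*$. Here I would invoke \eqref{e3.11}, which asserts $\sup_E|u(t,x)-\tilde u(t,x)|\to 0$, where $\tilde u$ solves the random Fisher--KPP equation \eqref{e1} with the same initial datum $u_0$. Because the two trajectories converge to each other in the sup-norm, they share exactly the same subsequential limits: $u(t_n,\cdot)\to z$ if and only if $\tilde u(t_n,\cdot)\to z$. This gives $\Omega_u(u_0,v_0,\omega)=\Omega_{\tilde u}(u_0)$, and Proposition \ref{prop2.1-1} identifies the right-hand side with $\K^*$. Combining with the product structure yields $\Omega(u_0,v_0,\omega)=\K^*\times\{0\}$, as claimed, and one obtains both inclusions simultaneously from this chain of equalities.

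I do not expect a deep obstacle, since the heavy analytic work is carried by Theorem \ref{thm-ext} and Proposition \ref{prop2.1-1}. The only points demanding genuine care are, first, that the transfer of the $\omega$-limit set from $u$ to $\tilde u$ must be justified from uniform (sup-norm) convergence, which is precisely what \eqref{e3.11} supplies, rather than from pointwise convergence; and second, that the decoupling into a product is legitimate only because the $v$-component converges uniformly to a \emph{single} limit, the zero function, rather than merely remaining bounded. Keeping the convergences uniform throughout is what makes the product description of the pathwise limit set exact.
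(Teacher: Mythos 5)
Your proof is correct and follows essentially the same route as the paper, which simply observes that the result ``follows directly from Theorem \ref{thm-ext}'' (uniform extinction of $v$ plus \eqref{e3.11}) combined with Proposition \ref{prop2.1-1}; you have merely filled in the details the paper leaves implicit. Your explicit treatment of the product decoupling, the sup-norm transfer of the $\omega$-limit set from $u$ to $\tilde u$, and the degenerate case $u_0\equiv 0$ (which the paper's blanket phrase ``for any initial condition'' glosses over) is a careful elaboration, not a different approach.
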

\begin{proof}
This theorem follows directly Theorem \ref{thm-ext}.
\end{proof}

Next, we want to describe the $\omega$-limit set of \eqref{eq-main} when $\lambda>0$. 
In next two lemmas, we will show that $u(t,x)$ and $v(t,x)$ is greater than some small positive constant, for all $x$, infinitely often.

\begin{lem}\label{lem-sup} Assume that $\lambda>0$. There is $\delta>0$ such that
	with probability 1, there are infinitely many $t_n = t_n(\omega) > 0$ such that $t_n > t_{n-1}$, $\lim_{n\to\infty} t_n =\infty$
and	$$\sup_{\bar E}u(t_n,x)\geq \delta\text{ and }\sup_{\bar E}v(t_n,x)\geq \delta,\;\forall n.$$
\end{lem}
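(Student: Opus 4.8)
The plan is to derive the lemma from the time-averaged persistence bounds \eqref{e3.2}--\eqref{e3.3} of Theorem \ref{thm-per}. The only real difficulty is that those bounds control $u$ and $v$ \emph{separately}, whereas the claim requires a \emph{common} sequence of times at which both densities are simultaneously large. I would overcome this by exploiting the elementary dynamical fact that whenever the prey is uniformly small the predator must decay; this forces the two ``large'' events to overlap.

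Concretely, I first fix
$$\delta = \min\{\delta_1/2,\ \delta_2/2,\ d_{\min}/(2e_{\max})\},$$
where $\delta_1,\delta_2$ are the persistence constants. The constraint $\delta\leq d_{\min}/(2e_{\max})$ guarantees that on any time interval where $\sup_{\bar E}u(t,\cdot)<\delta$ the predator's per-capita rate satisfies $-d(\xi(t))+e(\xi(t))u(t,x)-f(\xi(t))v(t,x)\leq -d_{\min}+e_{\max}\delta =: -\gamma$ with $\gamma\geq d_{\min}/2>0$. Hence on such an interval $\partial_t v\leq \alpha_2(\xi(t))\Delta v-\gamma v$, and comparing with the spatially constant supersolution solving $\dot W=-\gamma W$ via the maximum principle \cite{conway1977comparison} yields $\sup_{\bar E}v(t,\cdot)\leq \sup_{\bar E}v(s,\cdot)\,e^{-\gamma(t-s)}$ throughout.

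Next I extract the relevant time sets. Since $\frac1{|E|}\int_E u(t,x)\,dx\leq \sup_{\bar E}u(t,\cdot)\leq M_1$ while the running time average of the left side is at least $\delta_1$ by \eqref{e3.2}, the set $\{t:\sup_{\bar E}u(t,\cdot)\geq\delta\}$ has positive lower density and in particular is a.s.\ unbounded. Applying the same reasoning to \eqref{e3.3} gives an a.s.\ unbounded set, hence a sequence $s_k\uparrow\infty$, with $\sup_{\bar E}v(s_k,\cdot)\geq\delta_2/2\geq\delta$. For each such $s_k$ I set
$$t_k^\ast := \max\{\,t\leq s_k:\ \sup_{\bar E}u(t,\cdot)\geq\delta\,\},$$
which is well defined for large $k$ because $t\mapsto\sup_{\bar E}u(t,\cdot)$ is continuous (so the set is closed and nonempty). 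By maximality the prey stays below $\delta$ on $(t_k^\ast,s_k]$, so the decay estimate gives $\delta_2/2\leq \sup_{\bar E}v(s_k,\cdot)\leq \sup_{\bar E}v(t_k^\ast,\cdot)$; since also $\sup_{\bar E}u(t_k^\ast,\cdot)\geq\delta$ by construction, \emph{both} densities exceed $\delta$ at the single time $t_k^\ast$. Unboundedness of $\{\sup_{\bar E}u\geq\delta\}$ forces $t_k^\ast\to\infty$, and passing to a strictly increasing subsequence produces the required $t_n$.

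The main obstacle is exactly this coordination step: persistence alone places each species in a set of positive density, and two positive-density sets need not meet in an unbounded set, so a crude union bound fails. The argument circumvents this by anchoring to the predator's large times $s_k$ and walking back to the last instant $t_k^\ast$ at which the prey was large, after which the decay-when-prey-is-small mechanism certifies that the predator is still large there. Two technical points I would check carefully are that the comparison principle applies in the switching setting (the coefficients $\alpha_2(\xi(t)),d(\xi(t)),e(\xi(t))$ are piecewise constant in $t$, so one applies \cite{conway1977comparison} on each deterministic excursion of $\xi$ and concatenates across jump times), and that everything holds off a single null set, which is automatic since each ingredient above is an a.s.\ pathwise statement.
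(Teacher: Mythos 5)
Your proof is correct, but it is genuinely more than what the paper records: the paper's entire proof of this lemma is the single sentence that it ``follows directly from Theorem \ref{thm-per},'' with no indication of how the two \emph{separate} time-average bounds \eqref{e3.2} and \eqref{e3.3} yield a \emph{common} sequence of times at which both $\sup_{\bar E}u$ and $\sup_{\bar E}v$ are large. You are right that this is not a purely formal consequence: two $[0,M]$-valued functions can each have time average bounded below while their minimum is always small (alternate the large windows), so some dynamical input is required, and your input is exactly the natural one. Each step checks out: the sets $\{t:\sup_{\bar E}u(t,\cdot)\geq\delta\}$ and $\{t:\sup_{\bar E}v(t,\cdot)\geq\delta_2/2\}$ have positive lower density by the Chebyshev-type estimate against the a.s.\ bounds $M_1,M_2$ from Theorem \ref{thm-pre}(iv) (this needs $\delta<\delta_1$, which your choice $\delta\leq\delta_1/2$ ensures); the last-exit time $t_k^\ast=\max\{t\leq s_k:\sup_{\bar E}u(t,\cdot)\geq\delta\}$ is well defined since $t\mapsto\sup_{\bar E}u(t,\cdot)$ is continuous and the prey's large-time set is unbounded; on $(t_k^\ast,s_k]$ the pointwise bound $u<\delta\leq d_{\min}/(2e_{\max})$ together with $f,v\geq 0$ gives $\partial_t v-\alpha_2(\xi(t))\Delta v\leq-\gamma v$ with $\gamma\geq d_{\min}/2$, and the comparison with the spatially constant supersolution $W(t)=\sup_{\bar E}v(t_k^\ast,\cdot)e^{-\gamma(t-t_k^\ast)}$ (which satisfies the Neumann condition trivially) is legitimate in the switching setting by concatenating over the deterministic excursions of $\xi$, as you note, so $\sup_{\bar E}v(t_k^\ast,\cdot)\geq\sup_{\bar E}v(s_k,\cdot)\geq\delta_2/2\geq\delta$; the degenerate case $t_k^\ast=s_k$ is harmless since both bounds then hold at $s_k$ itself; and $t_k^\ast\to\infty$ because the set $\{\sup_{\bar E}u\geq\delta\}$ is unbounded, so a strictly increasing subsequence exists. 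In short, you start from the same ingredient as the paper (Theorem \ref{thm-per}), but where the paper asserts the conclusion ``directly,'' you identify and close the simultaneity gap with a last-exit-time anchoring plus decay-when-prey-is-scarce argument; what your route buys is a complete, quantitative proof (with an explicit $\delta$) of a step the paper leaves implicit.
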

\begin{proof} 
	This lemma follows directly from Theorem \ref{thm-per}.
\end{proof}

\begin{lem}\label{lem-inf}
Assume that $\lambda>0$. There is $\delta_1>0$ such that
with probability 1, there are infinitely many $s_n = s_n(\omega) > 0$ such that $s_n > s_{n-1}$, $\lim_{n\to\infty} s_n =\infty$
and	$$\inf_{\bar E}u(s_n,x)\geq \delta_1\text{ and }\inf_{\bar E}v(s_n,x)\geq \delta_1,\;\forall n.$$
\end{lem}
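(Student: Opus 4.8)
The plan is to promote the ``supremum large infinitely often'' conclusion of Lemma~\ref{lem-sup} to the corresponding statement for the infimum, by exploiting the smoothing and spreading effect of the diffusion together with the comparison principle. The starting observation is that, since $0\leq u\leq M_1$ and $0\leq v\leq M_2$ almost surely by Theorem~\ref{thm-pre}, the reaction terms are bounded below by linear ones: with $L_1:=b_{\max}M_1+c_{\max}M_2$ and $L_2:=d_{\max}+f_{\max}M_2$ one has $a(\xi(t))-b(\xi(t))u-c(\xi(t))v\geq -L_1$ and $-d(\xi(t))+e(\xi(t))u-f(\xi(t))v\geq -L_2$, so that $u$ and $v$ are nonnegative supersolutions of linear parabolic equations,
$$\frac{\partial u}{\partial t}\geq \alpha_1(\xi(t))\Delta u-L_1 u,\qquad \frac{\partial v}{\partial t}\geq \alpha_2(\xi(t))\Delta v-L_2 v,$$
with the Neumann boundary condition \eqref{e3}, and with $L_1,L_2$ deterministic and independent of $\omega$.

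First I would record a uniform equicontinuity estimate for $u(t,\cdot)$ and $v(t,\cdot)$. Because $\alpha_i(\xi(t))\in[\alpha_{i,\min},\alpha_{i,\max}]$ and the inhomogeneities $u(a-bu-cv)$, $v(-d+eu-fv)$ are uniformly bounded, the De Giorgi--Nash--Moser theory for divergence-form operators with bounded measurable coefficients provides a H\"older modulus of continuity that holds at every time $t\geq 1$ and, crucially, with a constant depending only on $\alpha_{i,\min},\alpha_{i,\max}$, on $M_1,M_2$, and on $E$ --- not on the switching path. The point is that the backward-in-time cylinders used in this estimate may freely cross the jumps of $\xi(\cdot)$, so the estimate is insensitive to the (random) location of the switches. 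Consequently, for every $t_n$ furnished by Lemma~\ref{lem-sup} with $t_n\geq 1$, the bound $\sup_{\bar E}u(t_n,\cdot)\geq\delta$ forces $u(t_n,\cdot)\geq\delta/2$ on a ball $B(x_n,r)$ of fixed radius $r$ independent of $n$ and $\omega$; the smoothness of $\partial E$ (a cone condition) guarantees $|B(x_n,r)\cap\bar E|\geq\kappa_0>0$ uniformly. The same holds for $v$ on a ball $B(x_n',r)$.

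Next I would propagate this localized lower bound to a global one after a short fixed waiting time $h>0$. Writing $w=e^{-L_1(t-t_n)}\widetilde w$ turns the linear equation $\partial_t w=\alpha_1(\xi(t))\Delta w-L_1 w$ into the pure diffusion $\partial_t\widetilde w=\alpha_1(\xi(t))\Delta\widetilde w$, whose Neumann propagator has a kernel $p^{(1)}(t_n,t_n+h;x,y)$. By comparison with the first supersolution inequality,
$$u(t_n+h,x)\geq e^{-L_1 h}\int_E p^{(1)}(t_n,t_n+h;x,y)\,u(t_n,y)\,dy,\qquad x\in\bar E.$$
The Aronson-type lower Gaussian bound for parabolic operators with bounded measurable coefficients yields $p^{(1)}(t_n,t_n+h;x,y)\geq\kappa_h>0$ uniformly in $x,y\in\bar E$ and in $\omega$; integrating against the localized bound gives
$$u(t_n+h,x)\geq e^{-L_1 h}\kappa_h\int_{B(x_n,r)\cap\bar E}u(t_n,y)\,dy\geq e^{-L_1 h}\kappa_h\,\kappa_0\,\frac{\delta}{2}=:\delta_1^{u}>0,$$
for all $x\in\bar E$, and an entirely analogous bound $\delta_1^{v}>0$ for $v$. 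Setting $s_n:=t_n+h$ and $\delta_1:=\min\{\delta_1^{u},\delta_1^{v}\}$ then yields $\inf_{\bar E}u(s_n,\cdot)\geq\delta_1$ and $\inf_{\bar E}v(s_n,\cdot)\geq\delta_1$; since $t_n$ is strictly increasing with $t_n\to\infty$, so is $s_n$, and the lemma follows.

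The main obstacle is the uniformity over the random realization $\omega$: both the equicontinuity estimate and the heat-kernel lower bound must hold with one and the same constants for almost every switching path, even though $\xi(\cdot)$ is only measurable (piecewise constant) in $t$ and its jump times are random. This is exactly why the De Giorgi--Nash--Moser and Aronson estimates are the right tools here, as their constants depend only on the deterministic ellipticity bounds $\alpha_{i,\min},\alpha_{i,\max}$, the deterministic $L^\infty$ bounds $M_1,M_2$, and the domain $E$; this makes a single pair $(h,\delta_1)$ work simultaneously for all $n$ and a.e.\ $\omega$, which is what the infinitely-often conclusion requires.
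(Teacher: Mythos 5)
Your proof is correct, and it rests on the same underlying insight as the paper's, but it implements that insight through a different decomposition. The paper's proof is a one-stroke application of Aronson's parabolic Harnack inequality \cite[Theorem 3]{aronson1967local} to $u$ and $v$ directly: since $|u(a-bu-cv)|\leq K|u|$ and $|v(-d+eu-fv)|\leq K|v|$, one may take $k=0$ in that theorem, and because the Harnack constant depends only on the structural data of the two regimes \eqref{e6+} and \eqref{e6-}, it is non-random; this produces times $s_n>t_n$ with $\sup_{E}u(t_n,\cdot)\leq C\inf_E u(s_n,\cdot)$ (and likewise for $v$), so $\delta_1=\delta/C$ works with the $t_n,\delta$ of Lemma \ref{lem-sup}. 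Your two-step route --- De Giorgi--Nash--Moser H\"older equicontinuity to convert $\sup_{\bar E}u(t_n,\cdot)\geq\delta$ into a lower bound $\delta/2$ on a ball of fixed radius, then a Duhamel comparison against the Neumann propagator with an Aronson-type Gaussian lower bound to spread that bound over all of $\bar E$ after a fixed waiting time $h$ --- is in effect a hand-made proof of the Harnack inequality in this setting, since the two ingredients you invoke are exactly the components from which the parabolic Harnack theory is built. What your version buys: an explicit time shift ($s_n=t_n+h$ with a single fixed $h$) and an explicit expression for $\delta_1$; what it costs: two estimates to verify instead of one, plus two points worth making explicit --- the uniform kernel lower bound $p^{(1)}\geq\kappa_h$ requires $E$ connected (this is assumed in Theorem \ref{thm-pre}, but you should say so, as the bound fails on disconnected domains), and the H\"older estimate must hold up to the Neumann boundary, which the $H^{2+l}$ regularity of $\partial E$ covers. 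Crucially, you identified the same key point the paper emphasizes: all constants depend only on the deterministic ellipticity bounds, the $L^\infty$ bounds $M_1,M_2$, and $E$, never on the realization of the switching path, which is precisely why a single pair $(h,\delta_1)$ serves all $n$ and almost every $\omega$.
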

\begin{proof}
	By Lemma \ref{lem-sup}, there are a sequence $\{t_n\}\uparrow\infty$ such that
	$$
	\sup_E u(t_n,x)\geq \delta>0,\;	\sup_E v(t_n,x)\geq \delta>0,
	$$
	for some $\delta$.
Using Harnack inequality \cite[Theorem 3]{aronson1967local}, we have that there are $\{s_n\}\uparrow\infty$ such that
\begin{equation}
\sup_{E}u(t_n,x)\leq C\inf_E u(s_n,x), \;	\sup_{E}v(t_n,x)\leq C\inf_E v(s_n,x)
\end{equation}
where $C<\infty$ is a constant, which is independent of $n$.
It is noted that we can choose the constant $k=0$ in \cite[Theorem 3]{aronson1967local} since
$|u(a-bu-cv)|\leq K|u|$ and $|v(-d+ eu-fv)|\leq K|v|$, for some $K$.
Moreover, since the constant $C$ depend on the structure of the equation only, and \eqref{eq-main} switches between two systems \eqref{e6+} and \eqref{e6-}, $C$ can be chosen to be non-random.
Therefore, the proof is complete.
\end{proof}



In the next step, we show that $u(t,x)$ and $v(t,x)$ are greater than some positive constant, for all $x$, infinite often at the switching time $\tau_k$, which are the random times at which the switching process changes from one state to another.
\begin{prop}\label{prop-1}
	There exists $\hat \delta>0$ such that with probability 1, there are infinitely many $k=k(\omega)$ such that
	$$
\inf_E u(\tau_{2k+1},x)\geq \hat \delta,\;	\inf_E v(\tau_{2k+1},x)\geq \hat \delta.
	$$
\end{prop}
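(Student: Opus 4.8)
The plan is to transfer the ``infinitely often'' lower bounds holding at the (non-switching) times $s_n$ from Lemma \ref{lem-inf} to switching times of odd index, using two ingredients: a deterministic estimate controlling how fast $\inf_E u$ and $\inf_E v$ can decay, and the strong Markov property of $\xi(t)$ together with a conditional Borel--Cantelli argument ensuring that infinitely many of the good times are immediately followed by a short excursion of $\xi$ containing at least two jumps. I would first establish the decay estimate: there is a constant $\gamma>0$, depending only on $M_1,M_2$ from Theorem \ref{thm-pre} and on $a,\dots,f$, such that for all $0\le s\le t$ one has, almost surely, $\inf_E u(t,\cdot)\ge e^{-\gamma(t-s)}\inf_E u(s,\cdot)$ and $\inf_E v(t,\cdot)\ge e^{-\gamma(t-s)}\inf_E v(s,\cdot)$. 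This follows by comparing $v$ (resp.\ $u$) with the spatially constant subsolution $\phi(t)=\inf_E v(s,\cdot)\,e^{-\gamma(t-s)}$: since $0\le u\le M_1$, $0\le v\le M_2$ and the coefficients are positive and bounded, the reaction term of $v$ is bounded below by $-\gamma$ once $\gamma\ge\max\{b_{\max}M_1+c_{\max}M_2,\ d_{\max}+f_{\max}M_2\}$, so $\phi$ is a Neumann-compatible subsolution of the same scalar equation on each holding interval of $\xi$, and the comparison principle applies.

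Next, using Lemma \ref{lem-inf} and the continuity of $t\mapsto(\inf_E u(t,\cdot),\inf_E v(t,\cdot))$ in $C(\bar E)$, I would realize the good times as stopping times. Define inductively $T_1=\inf\{t>1:\inf_E u(t,\cdot)\ge\delta_1,\ \inf_E v(t,\cdot)\ge\delta_1\}$ and $T_{n+1}=\inf\{t>T_n+3:\inf_E u(t,\cdot)\ge\delta_1,\ \inf_E v(t,\cdot)\ge\delta_1\}$. These are hitting times of a closed current-value condition, hence $\F_t$-stopping times, and by Lemma \ref{lem-inf} the set of good times is a.s.\ unbounded, so $T_n<\infty$ and $T_n\uparrow\infty$ a.s. The decay estimate then upgrades each good instant to a good interval: for every $t\in[T_n,T_n+2]$ we have $\inf_E u(t,\cdot),\inf_E v(t,\cdot)\ge\delta_1 e^{-2\gamma}=:\hat\delta$.

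The crux is to show that, almost surely, infinitely many intervals $[T_n,T_n+2]$ contain at least two jump times of $\xi$. Let $A_n$ denote this event and note $A_n\in\F_{T_n+2}\subseteq\F_{T_{n+1}}$. By the strong Markov property of $\xi$ at $T_n$ and memorylessness of the exponential sojourns, the residual holding time after $T_n$ is exponential with rate $q(\xi(T_n))$ and the subsequent holdings are independent exponentials; hence $\PP(A_n\mid\F_{T_n})\ge p_0>0$, where $p_0$ is the minimum over $i\in\{+,-\}$ of the probability that, started in state $i$, $\xi$ jumps at least twice within time $2$, which is positive since $q(+),q(-)>0$. As $\sum_n\PP(A_n\mid\F_{T_n})=\infty$ a.s., L\'evy's conditional Borel--Cantelli lemma gives $\PP(A_n\text{ i.o.})=1$. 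On each $A_n$ the interval $[T_n,T_n+2]$ contains two consecutive jump times $\tau_m<\tau_{m+1}$, exactly one of which has odd index; that odd time $\tau_{2k+1}$ lies in $[T_n,T_n+2]$, so $\inf_E u(\tau_{2k+1},\cdot),\inf_E v(\tau_{2k+1},\cdot)\ge\hat\delta$. Since $T_n\uparrow\infty$ with gaps at least $3$, the resulting odd switching times are distinct, yielding infinitely many such $k$.

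The main obstacle is that the good times produced by Lemma \ref{lem-inf} are defined through the entire trajectory and are a priori correlated with the switching process, so memorylessness cannot be invoked at them directly. I resolve this by re-selecting them as genuine $\F_t$-stopping times (legitimate because the defining condition is on the current value of the continuous map $t\mapsto(\inf_E u,\inf_E v)$) and then applying the strong Markov property and the conditional Borel--Cantelli lemma; demanding two jumps, rather than one jump of a prescribed parity, is precisely what lets me sidestep the unknown parity of the first jump after $T_n$.
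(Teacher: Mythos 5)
Your proposal is correct, but it proves Proposition \ref{prop-1} by a genuinely different route than the paper. The paper's proof is a case analysis on the stability of the equilibria of the two frozen systems \eqref{e6+} and \eqref{e6-}, carried out in Lemma \ref{lem-45}, Lemma \ref{lem-sus}, and the lemma following them: in the stable case it propagates the lower bound from the times $s_n$ of Lemma \ref{lem-inf} forward to the neighboring switching times, using Lemma \ref{lem-com} for a lower bound uniform in time and comparison with \eqref{eq-ode-below} for the transient, and noting (the same parity trick as yours, in index form) that $\{k_n\}\cup\{k_n+1\}$ contains infinitely many odd integers; in the unstable cases, where no forward-in-time lower bound is available because the frozen flow drives $v$ (or both components) to extinction, it argues backwards by contradiction: if $\inf_E v(\tau_{2k+1},\cdot)$ were too small, the frozen dynamics would keep $v$ small up to $s_n$, contradicting Lemma \ref{lem-inf}. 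You sidestep the case analysis entirely: your exponential lower-decay estimate (which is sound as set up, and is the same comparison device the paper itself uses via \eqref{eq-ode-below}) turns each good instant into a good window of fixed length $2$, and instead of propagating positivity to the possibly distant next odd switching time, you use the strong Markov property of $\xi$ at the stopping times $T_n$ together with L\'evy's conditional Borel--Cantelli lemma to force at least two consecutive jumps --- hence one jump of odd index --- into infinitely many of these windows. Your re-selection of the good times as hitting times is the key legitimizing step, and it works because $t\mapsto(u(t,\cdot),v(t,\cdot))$ is a continuous adapted process (indeed a path functional of $\xi$ and the deterministic initial data), so $T_n$ is a stopping time for a filtration with respect to which $\xi$ is strong Markov; it would be worth stating this dependence explicitly. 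The trade-off between the two arguments: the paper's route extracts extra dynamical information about the frozen flows (the uniform-in-time bounds \eqref{eq-327} and the contraction to neighborhoods of equilibria, which are reused in the proof of Theorem \ref{thm2}), whereas yours is shorter, uses nothing about the frozen systems beyond the a priori bounds of Theorem \ref{thm-pre} and positivity of the coefficients, and would extend essentially unchanged to switching processes with more states or to other bounded reaction terms, where the paper's stable/unstable trichotomy would have to be redone case by case.
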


Let $$(u^*_+,v^*_+)=\Big(\frac {a(+)f(+)+c(+)d(+)}{b(+)f(+)+c(+)e(+)},\frac {a(+)e(+)-b(+)d(+)}{b(+)f(+)+e(+)c(+)}\Big),$$
$$(u^*_-,v^*_-)=\Big(\frac {a(-)f(-)+c(-)d(-)}{b(-)f(-)+c(-)e(-)},\frac {a(-)e(-)-b(-)d(-)}{b(-)f(-)+e(-)c(-)}\Big).$$
We divide the proof of this proposition into three cases, which are formalized as following lemmas.
\begin{lem}\label{lem-45}
	If both $(u^*_+,v^*_+)$ and $(u^*_-,v^*_-)$ are stable with respect to \eqref{e6+} and \eqref{e6-}, respectively, then conclusion in Proposition \ref{prop-1} holds.
\end{lem}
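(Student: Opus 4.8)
Lemma \ref{lem-45} is the case in which a long sojourn in \emph{either} regime drives the solution into a fixed region bounded away from $0$, and the plan is to couple this uniform attraction with the recurrence of long holding times. Since both interior equilibria are stable they are in particular positive, i.e. $\tfrac{a(i)}{b(i)}>\tfrac{d(i)}{e(i)}$ for $i\in\{+,-\}$; set $\hat\delta=\tfrac12\min\{u^*_+,v^*_+,u^*_-,v^*_-\}>0$ and pick $\eps\in(0,\hat\delta)$. By Lemma \ref{lem-com}(i), applied on a compact set of the form $\C_{\delta_0,M}$ (with $M$ from Theorem \ref{thm-pre}(iv) and $\delta_0>0$ to be fixed below), there is a time $T$ so that any trajectory of \eqref{e6+} (resp. \eqref{e6-}) issued from $\C_{\delta_0,M}$ lies in $U_\eps(u^*_+,v^*_+)$ (resp. $U_\eps(u^*_-,v^*_-)$), hence is $\ge\hat\delta$, for all times $\ge T$. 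Assuming without loss of generality $\xi_0=+$, the intervals $[\tau_{2k},\tau_{2k+1})$ are exactly the $+$-regime intervals, on which $(u,v)$ solves \eqref{e6+}. Thus it suffices to produce infinitely many indices $k$ with $(u(\tau_{2k},\cdot),v(\tau_{2k},\cdot))\in\C_{\delta_0,M}$ and $\sigma_{2k+1}\ge T$: for each such $k$ the flow of \eqref{e6+} carries the solution within $\eps$ of $(u^*_+,v^*_+)$ by time $\tau_{2k+1}$, giving $\inf_E u(\tau_{2k+1},\cdot),\inf_E v(\tau_{2k+1},\cdot)\ge\hat\delta$, which is the assertion of Proposition \ref{prop-1}.

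For the lower bound at even switching times I would introduce the stopping times $\rho_n=\inf\{t\ge n:\inf_E u(t,\cdot)\ge\delta_1,\ \inf_E v(t,\cdot)\ge\delta_1\}$, which are finite almost surely and increase to $\infty$ by Lemma \ref{lem-inf}, so that $(u(\rho_n,\cdot),v(\rho_n,\cdot))\in\C_{\delta_1,M}$. The key claim is that this lower bound survives, uniformly, up to the first even switching time after $\rho_n$: if the intervening holding time is short (at most $T$), a Gronwall-type estimate using the global upper bound $M$ of Theorem \ref{thm-pre}(iv) (so the reaction terms dominate a linear decay) keeps both components $\ge\delta_1 e^{-CT}$; if it is long, then \emph{because the opposite equilibrium is also stable} the solution is instead attracted to $(u^*_-,v^*_-)$ and is $\ge\hat\delta$ at that switching time. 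Setting $\delta_0:=\min\{\delta_1e^{-CT},\hat\delta\}$ (note the constants are chosen in the order $\delta_1\to\delta_0\to T$, so there is no circular dependence in the first paragraph once $T$ is taken as the attraction time for $\C_{\delta_0,M}$), this yields a uniform $\delta_0>0$ with $(u(\tau_{2k(n)},\cdot),v(\tau_{2k(n)},\cdot))\in\C_{\delta_0,M}$ at the first even switching index $2k(n)$ past $\rho_n$. This is precisely where the hypothesis that \emph{both} equilibria are stable is indispensable: it forbids any collapse of the populations while one waits for a favorable sojourn.

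To finish, I would upgrade these bounded-below configurations to long subsequent $+$-sojourns via the conditional independence recalled in Section \ref{s:2}. The event $\{(u(\tau_{2k},\cdot),v(\tau_{2k},\cdot))\in\C_{\delta_0,M}\}$ is $\mathcal F^{2k}_0$-measurable, while $\{\sigma_{2k+1}\ge T\}$ is $\mathcal F^\infty_{2k}$-measurable with $\PP(\sigma_{2k+1}\ge T\mid\mathcal F^{2k}_0)=e^{-q(+)T}=:p>0$, the two being independent once $\xi_0$ is fixed. Along the (infinite) sequence of indices $k(n)$ from the second paragraph the conditional probabilities sum to $\infty$, each contributing $p$, so the extended (conditional) Borel–Cantelli lemma produces infinitely many $k$ with both $(u(\tau_{2k},\cdot),v(\tau_{2k},\cdot))\in\C_{\delta_0,M}$ and $\sigma_{2k+1}\ge T$. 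By the reduction in the first paragraph this gives $\inf_E u(\tau_{2k+1},\cdot),\inf_E v(\tau_{2k+1},\cdot)\ge\hat\delta$ for infinitely many $k$, completing the proof.

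The main obstacle is the dependence between the solution-determined recurrence times of Lemma \ref{lem-inf} and the switching structure: one cannot treat ``a long $+$-sojourn follows a recurrence time'' as an event independent of the past, because the recurrence times are functionals of the whole trajectory, hence of the path of $\xi$. The resolution is to transfer the lower bound to the \emph{even switching times} $\tau_{2k}$, at which the configuration becomes $\mathcal F^{2k}_0$-measurable and is therefore decoupled from the forthcoming holding time $\sigma_{2k+1}$ through $\mathcal F^{2k}_0\perp\mathcal F^\infty_{2k}$ and the memorylessness of the exponential sojourns. The delicate bookkeeping is exactly the survival-of-the-lower-bound claim in the second paragraph, where the stability of \emph{both} regimes (not merely one) is what guarantees that $\delta_0$ can be taken uniform in $n$.
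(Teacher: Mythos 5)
Your proof is correct in substance, but it takes a genuinely different and heavier route than the paper's. The paper's argument is purely deterministic after Lemma \ref{lem-inf}: since \emph{both} equilibria are stable, each of the flows \eqref{e6+} and \eqref{e6-} satisfies a persistence bound that is uniform in time --- if the initial data lie in $\C_{\eps,M}$ then the solution stays above some $\eps_1(\eps)>0$ for \emph{all} $t\geq 0$ (this is \eqref{eq-327}, obtained exactly by your short-time/long-time dichotomy: comparison with \eqref{eq-ode-below} up to the attraction time $T^*$ from Lemma \ref{lem-com}, nearness to the equilibrium thereafter). Given that, the paper simply takes the times $s_n$ from Lemma \ref{lem-inf}, sets $k_n:=\min\{k:\tau_k>s_n\}$, and propagates the lower bound deterministically from $s_n$ to $\tau_{k_n}$ and then once more to $\tau_{k_n+1}$; since $\{k_n\}\cup\{k_n+1\}$ contains infinitely many odd indices, Proposition \ref{prop-1} follows with no probabilistic input whatsoever. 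Your third paragraph --- transferring the bound to even switching times, invoking $\F^{2k}_0\perp\F^{\infty}_{2k}$ and a conditional Borel--Cantelli argument to secure sojourns $\sigma_{2k+1}\geq T$ --- is valid (it is the same independence mechanism the paper deploys for the stopping times $\eta_k$ in Theorem \ref{thm2}), but it is unnecessary here: Proposition \ref{prop-1} asks only for a uniform positive lower bound at infinitely many odd switching times, not for proximity to $(u^*_+,v^*_+)$, and the time-uniform persistence bound already delivers the former regardless of whether the sojourn is long or short. What your route buys is stronger information --- the solution is within $\eps$ of $(u^*_+,v^*_+)$ at infinitely many $\tau_{2k+1}$ --- which is essentially the first step of the paper's proof of Theorem \ref{thm2}; the cost is the independence bookkeeping and one small inaccuracy you should repair if you keep your version: between $\rho_n$ and the first even switching time the trajectory may traverse \emph{two} regime segments (the remainder of the current interval plus one full interval of the opposite regime), each of unbounded length, so the persistence estimate must be applied once per segment, degrading the constant to $\eps_1(\eps_1(\delta_1))$ rather than via a single ``short versus long holding time'' dichotomy.
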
	
	\begin{proof} 
	We first prove the fact that if $\inf_Eu_0(x),\inf_Ev_0(x)\geq \eps>0$ for some $\eps$ then there is $\eps_1=\eps_1(\eps)$ such that
	\begin{equation}\label{eq-327}
	\inf_Eu_{\pm}(t,x)\geq \eps_1,\;\inf_Ev_{\pm}(t,x)\geq\eps_1,\quad\forall t\geq 0.
	\end{equation}
	Indeed, by virtue of Lemma \ref{lem-com}, there is $T^*> 0$ such that $\inf_E u_{\pm}(t,x) > \frac{u^*_{\pm}}2$, $\inf_E v_{\pm}(t,x) > \frac{v^*_{\pm}}2$ for all $t > T^*$,
	provided $(u_{\pm}(0,x), v_{\pm}(0,x)) \in \C_{\eps,M}$. 
	Moreover, by comparison principle between $u_{\pm}(t,x), v_{\pm}(t,x)$ and solution to
	\begin{equation}\label{eq-ode-below}
		\begin{cases}
			U'(t)=U(t)(a_{\min}-b_{\max}M-c_{\max}M),\\
			V'(t)=V(t)(-d_{\max}-f_{\max}M),
		\end{cases}
	\end{equation}
	 we obtain that 
	$ u_{\pm}(t,x)\geq \eps e^{-KT^*},\; v_{\pm}(t,x)\geq \eps e^{-KT^*}\text{ for all } t\leq T^*,$
where $K=d_{\max}+(b_{\max}+c_{\max}+f_{\max})M$.
As a result, \eqref{eq-327} is proved.

	Now, by Lemma \ref{lem-inf}, there exists a sequence $s_n \uparrow\infty$ such that $\inf_E u(s_n,x)\geq \delta_1$ and $\inf_E v(s_n,x)\geq \delta_1$ for all $n$, for some $\delta_1>0$.
	Put $k_n := \min\{k: \tau_k>s_n\}$. 
	Therefore, it is seen that $\inf_Eu(\tau_{k_n},x)>\delta_2>0$ and $\inf_Ev(\tau_{k_n},x)>\delta_2>0$ for some $\delta_2$. Applying
	this again deduces that $\inf_Eu_{k_{n+1}}(x) >\delta_3>0$ and $\inf_Ev_{k_{n+1}}(x) >\delta_3>0$ for some $\delta_3$. 
	Obviously, the set $\{k_n: n \in\mathbb N\} \cup 
	\{k_n +1: n \in\mathbb N\}$ contains infinitely many odd numbers. The proof of the lemma is complete. 
	\end{proof}
	 \begin{lem}\label{lem-sus}
		If one system is stable and the other is unstable, then conclusion of Proposition \ref{prop-1} holds.
	\end{lem}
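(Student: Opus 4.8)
The plan is to reduce the problem to exploiting the favorable (stable) phase while controlling the damage done during the unstable phase. Assume, without loss of generality, that the $+$ system is the stable one, so that by Lemma~\ref{lem-com}(i) the point $(u^*_+,v^*_+)$ is a positive, globally attracting equilibrium, while the $-$ system is unstable, so that by Lemma~\ref{lem-com}(iii) its predator component is driven toward $(a(-)/b(-),0)$. The essential difference from Lemma~\ref{lem-45} is that the uniform-in-time lower bound \eqref{eq-327} is no longer available in the $-$ state: during a long $-$ phase, $v$ can be pushed arbitrarily close to $0$. The idea is therefore to let the stable $+$ phases replenish both populations, and to isolate, infinitely often, a $+$ phase that (a) starts from a state bounded below by a fixed positive constant, and (b) lasts long enough to drive the solution into a fixed neighborhood of $(u^*_+,v^*_+)$.

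First I would record two quantitative ingredients. By Lemma~\ref{lem-com}(i), there is a fixed $\hat\delta>0$, say $\hat\delta=\tfrac14\min\{u^*_+,v^*_+\}$, and for each $\eps>0$ a recovery time $T^*(\eps)$, such that any solution of the $+$ system with data in $\C_{\eps,M}$ has $\inf_E u(t,\cdot)\geq 2\hat\delta$ and $\inf_E v(t,\cdot)\geq 2\hat\delta$ for all $t\geq T^*(\eps)$. Second, exactly as in the derivation of \eqref{eq-327}, comparison with \eqref{eq-ode-below} shows that in either state, over a time interval of length $\ell$, the two infima can shrink by at most the factor $e^{-K\ell}$, where $K=d_{\max}+(b_{\max}+c_{\max}+f_{\max})M$; in particular any lower bound is preserved, up to a fixed multiplicative constant, over any bounded time window.

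Next I would combine these with the seeds supplied by Lemma~\ref{lem-inf}: almost surely there are times $s_n\uparrow\infty$ with $\inf_E u(s_n,\cdot)\geq\delta_1$ and $\inf_E v(s_n,\cdot)\geq\delta_1$ for a fixed $\delta_1>0$. Fix a short window length $\ell_0>0$ (small enough that $e^{-K\ell_0}\geq\tfrac12$) and set $\delta_2:=\delta_1 e^{-K\ell_0}$. I would then invoke the Borel--Cantelli / infinitely-often argument for the holding times used in Lemma~\ref{lem-45} (ultimately from the proof of \cite[Theorem~2.2]{du2011dynamics}), together with the conditional independence of $\mathcal F^n_0$ and $\mathcal F^\infty_n$, to select infinitely many seeds $s_n$ for which the current holding interval ends within time $\ell_0$ and is promptly followed by a $+$ phase of length at least $T^*(\delta_2)$. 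At the start of that $+$ phase the two infima are still at least $\delta_2$ by the decay estimate, so the data lie in $\C_{\delta_2,M}$; by the recovery estimate, the end of that $+$ phase carries both infima above $2\hat\delta$. Restricting further (within the same infinitely-often pattern) to instances in which the subsequent $-$ phase is also short, of length $\leq\ell_0$, the decay estimate keeps both infima above $2\hat\delta\,e^{-K\ell_0}\geq\hat\delta$ through the following switch as well. Since the end of the $+$ phase and the end of the short $-$ phase are consecutive switching times, exactly one of them is odd-indexed regardless of $\xi_0$, and at that $\tau_{2k+1}$ both infima are at least $\hat\delta$; this occurs for infinitely many $k$, which is the assertion of Proposition~\ref{prop-1}.

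I expect the main obstacle to be the interplay between the solution-based seeds $s_n$ of Lemma~\ref{lem-inf} and the holding times $\sigma_k$, compounded by the fact that the recovery time $T^*(\eps)$ blows up as $\eps\to0$. The whole point of routing the argument through the fixed seed level $\delta_1$ and the bounded-window decay factor $e^{-K\ell_0}$ is to keep the required $+$-phase length $T^*(\delta_2)$ a fixed finite number, so that the event ``a seed is quickly followed by a sufficiently long $+$ phase, itself followed by a short $-$ phase'' has uniformly positive conditional probability and can be shown, via the conditional Borel--Cantelli argument, to happen infinitely often; making this coincidence rigorous despite the dependence of the trajectory on the switching is the delicate step.
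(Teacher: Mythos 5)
Your deterministic ingredients are sound: the uniform recovery time $T^*(\eps)$ over the compact set $\C_{\eps,M}$ supplied by Lemma \ref{lem-com}(i), the state-independent decay factor $e^{-K\ell}$ obtained by comparison with \eqref{eq-ode-below} (valid in both regimes, since the reaction terms are bounded below by $-K$ on the invariant box), and the parity observation that of two consecutive switching times exactly one is odd-indexed, so that both endpoints of your pattern carry the bound $\hat\delta$. The genuine gap is the selection step. The times $s_n$ produced by Lemma \ref{lem-inf} are not stopping times set up for the filtration argument you invoke: they are extracted from an almost-sure statement about the whole path (via Theorem \ref{thm-per} and the Harnack inequality), so the event that a given seed carries the lower bound $\delta_1$ is not of the form $\{\eta_k=n\}\in\mathcal F^n_0$ at any switching index $n$, and ``the conditional independence of $\mathcal F^n_0$ and $\mathcal F^\infty_n$'' cannot simply be applied; in the paper that device is only ever used for events determined at the switching times $\tau_n$ (as in Theorem \ref{thm2}). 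Moreover, no infinitely-often holding-time argument occurs in Lemma \ref{lem-45}, which you cite as the template --- that proof is deterministic given Lemma \ref{lem-inf}. Your step is repairable: redefine the seeds as successive hitting times $\rho_k$ of the set $\{\inf_E u\geq\delta_1,\ \inf_E v\geq\delta_1\}$ (finite a.s.\ by Lemma \ref{lem-inf}), use the memorylessness of $\xi$ at the stopping time $\rho_k$ to get a uniformly positive conditional probability for your pattern, and conclude by a conditional Borel--Cantelli (L\'evy extension) argument along recursively chosen $\rho_k$; you would also need the parity case you skipped, since when a seed lies in a $+$ phase the next phase is $-$, so the pattern there must be ``short residual $+$ phase, short $-$ phase, long $+$ phase.'' None of this is carried out, and it is exactly the step you yourself flag as delicate, so as written the proof has a hole at its crux.

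For contrast, the paper's proof needs no probability at this stage and no long $+$ phases: each seed $s_n$ lies in either a $+$ phase or a $-$ phase. In the first case the lower bound propagates forward to the next odd switching time because the stable flow preserves positive lower bounds uniformly in time (the $+$-system analogue \eqref{eq-lm36-1} of \eqref{eq-327}); in the second case one argues backward by contraposition, using that the unstable flow cannot raise a uniformly small predator density (for every $\eps$ there is $\eps_1$ such that $v_-(t,\cdot)\leq\eps$ for all $t\geq0$ whenever $v_0\leq\eps_1$, \eqref{eq-lm36-2}), so the density at the odd switching time opening the phase containing $s_n$ could not have been uniformly below $\eps_2(\delta_1/2)$. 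If you complete the probabilistic selection, your route does have one genuine merit the paper's lacks: it never uses the structure of the $-$ system, only boundedness of the reaction terms, so it would prove Proposition \ref{prop-1} uniformly in all three stability cases at once; but in its present form the argument is incomplete where it matters most.
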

\begin{proof}
Assume that $(u^*_+,v^*_+)$ is stable point of \eqref{e6+} and  $(\frac{a(-)}{b(-)},0)$ is stable point of \eqref{e6-}.
	It is similar to Lemma \ref{lem-45} to prove that provided $\inf_Eu_0(x),\inf_Ev_0(x)\geq \eps>0$, 
	\begin{equation}\label{eq-lm36-1}
\inf_Eu_{+}(t,x)\geq\eps_1,\;		\inf_Ev_{+}(t,x)\geq\eps_1, \text{ for all } t\geq 0,
		\end{equation} 
for some $\eps_1=\eps_1(\eps)>0.$
	
We next prove the claim that: for any $\eps>0$, there is $\eps_1=\eps_1(\eps)$ such that 
\begin{equation}\label{eq-lm36-2}
\text{for any initial condition } v_0\in\C_{\eps_1,M}, v_-(t,x)\leq \eps\text{ for all }t\geq 0.
\end{equation}
Indeed, since $(u_-(t,x),v_-(t,x))$ tends to $(\frac{a(-)}{b(-)},0)$, using Lemma \ref{lem-com}, there is $T^*$ such that for any initial condition, $(u_0,v_0)\in \C_{0,M}\times C_{0,M}$,  $v_-(t,x)\leq \eps$ for all $t\geq T_*$.
Again, 	by comparison principle between $ v_{-}(t,x)$ and solution to
$		V'(t)=V(t)(e_{\max}M),$
we obtain that  $v_{-}(t,x)\leq e^{e_{\max}MT_*}\sup_{x}v_0(x)$ for all $t\leq T_*$. Choosing $\eps_1$ such that $e^{e_{\max}MT_*}\eps_1<\eps$,   $v_-(t,x)\leq \eps$ for all $t\geq 0$ provided $v_0\in\C_{\eps_1,M}$.

Now, by Lemma \ref{lem-inf}, there is a sequence $(s_n)\uparrow\infty$ such that $\inf_E u(s_n,x)\geq \delta_1>0$ and
$\inf_E v(s_n,x)\geq \delta_1>0$ for all $n$, for some $\delta_1>0$. 
If $\tau_{2k} \leq s_n \leq \tau_{2k+1}$ we have $\inf_Ev(\tau_{2k+1},x) >\eps_1>0$ for some $\eps_1>0$ due to \eqref{eq-lm36-1}. 
If $\tau_{2k+1} < s_n <\tau_{2k+2}$ then $\xi_{s_n} = -$. Let $\eps_2=\eps_2(\delta_1/2)$ as in \ref{eq-lm36-2}.
Then, we must have $\inf_Ev(\tau_{2k+1},x)\geq \eps_2$ because otherwise, $v(s_n,x)\leq \delta_1/2$ for all $x$, which is a contradiction.
	Let $\delta=\min\{\eps_1,\eps_2\}$, the proof is complete.
	The conclusion for $u_+(t,x)$ is proved similarly as done in Lemma \ref{lem-45}.
\end{proof}

\begin{lem}
	If both systems are unstable, then conclusion of Proposition \ref{prop-1} holds.
\end{lem}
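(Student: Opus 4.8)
The plan is to follow the same skeleton as Lemmas \ref{lem-45} and \ref{lem-sus}, but the genuinely new feature here is that both pure regimes fall in the predator‑extinction case $\frac{a(\pm)}{b(\pm)}<\frac{d(\pm)}{e(\pm)}$, so by Lemma \ref{lem-com}(iii) each deterministic system drives $(u_\pm,v_\pm)\to(\frac{a(\pm)}{b(\pm)},0)$. Hence there is no interior equilibrium bounded away from $0$ to exploit, and the ``stays bounded below'' estimate \eqref{eq-327} is no longer available for $v$. The only substitute I would use is the quantitative one‑sided decay bound from comparison with \eqref{eq-ode-below}: if $\inf_E u(s,\cdot)\ge\delta_1$ and $\inf_E v(s,\cdot)\ge\delta_1$, then $\inf_E u(t,\cdot)\ge\delta_1 e^{-K(t-s)}$ and $\inf_E v(t,\cdot)\ge\delta_1 e^{-K(t-s)}$ for $t\ge s$, with $K=d_{\max}+(b_{\max}+c_{\max}+f_{\max})M$. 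Thus a lower bound at a ``good'' time survives, up to a controlled factor, for a bounded amount of time, and the whole argument reduces to placing a good time within a bounded distance of an odd switching time $\tau_{2k+1}$.

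The first step is to upgrade Lemma \ref{lem-inf} from an ``infinitely often'' statement to one of positive upper density, holding for $u$ and $v$ \emph{simultaneously}. The clean device for getting both lower bounds at the same instants is the product $\int_E uv\,dx$: from \eqref{e3.3} and the intermediate estimate \eqref{eq-324} in the proof of Theorem \ref{thm-per} one obtains $\liminf_{T\to\infty}\frac1T\int_0^T\int_E u(t,x)v(t,x)\,dx\,dt\ge c_1>0$. Since $\int_E uv\,dx\le M_1(\sup_E v)|E|$ and also $\le M_2(\sup_E u)|E|$, at every time with $\int_E uv\,dx\ge c_1/2$ both $\sup_E u$ and $\sup_E v$ are bounded below by a fixed positive constant; as $\int_E uv\,dx$ is bounded above, the set of such times has positive upper density. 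Applying the parabolic Harnack inequality \cite[Theorem 3]{aronson1967local} exactly as in Lemma \ref{lem-inf} (with the nonrandom constant justified there) converts this into a set $\mathcal G$ of times, of positive upper density $\rho>0$, on which $\inf_E u(\cdot,\cdot)\ge\delta_1$ and $\inf_E v(\cdot,\cdot)\ge\delta_1$ hold simultaneously.

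The second step is a renewal/ergodic estimate for the switching process alone. Call an interval $[\tau_{m-1},\tau_m)$ \emph{bad} if $\sigma_m>L$ or $\sigma_{m+1}>L$, and \emph{good} otherwise. Because the sojourn times are conditionally exponential, the long‑run fraction of time spent in bad intervals converges a.s. to a constant $\beta(L)$ controlled by $\mathbb{E}[\sigma\,;\,\sigma>L]$ and by $L\,\PP(\sigma>L)$ (the length‑biased time in long intervals, and in short intervals immediately followed by a long one), so $\beta(L)\to0$ as $L\to\infty$. Fix $L$ with $\beta(L)<\rho$. Then $\mathcal G$ must meet a good interval infinitely often, for otherwise $\mathcal G$ would eventually lie in the bad‑interval time set, forcing the upper density of $\mathcal G$ to be at most $\beta(L)<\rho$, a contradiction. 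Each such intersection gives a time $s\in\mathcal G$ sitting in an interval of length $\le L$ whose successor also has length $\le L$, so the first odd switching time $\tau_{2k+1}\ge s$ obeys $\tau_{2k+1}-s\le 2L$. Feeding this into the decay bound of the first step yields $\inf_E u(\tau_{2k+1},\cdot)\ge\delta_1 e^{-2KL}$ and $\inf_E v(\tau_{2k+1},\cdot)\ge\delta_1 e^{-2KL}$; setting $\hat\delta=\delta_1 e^{-2KL}$, and noting these intersections occur for infinitely many $k$, completes the proof.

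The step I expect to be the main obstacle is the coordination in the second step. The good times in $\mathcal G$ are functionals of the whole trajectory and are therefore \emph{not} independent of the interval lengths, so one cannot simply assert ``short intervals occur infinitely often, hence a good time lands in one''; indeed, because $v$ decays throughout each interval in this regime, one even expects a correlation between good times and interval structure. The device that circumvents this is to compare \emph{densities} rather than to argue along a single subsequence—positive upper density of $\mathcal G$ against vanishing time‑density of bad intervals—so the exponential tail estimate $L\,\PP(\sigma>L)\to0$ is the crucial analytic ingredient. A secondary delicate point is securing the two lower bounds at the \emph{same} times, which is exactly what the $\int_E uv$ functional is introduced to guarantee.
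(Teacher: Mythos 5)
Your proof is correct, but it takes a genuinely different route from the paper's. The paper argues in the \emph{opposite} direction: as in Lemma \ref{lem-sus}, it first establishes the forward smallness-propagation estimate \eqref{eq-lm37-2} (initial data below $\eps_1$ forces $u_{\pm},v_{\pm}\leq\eps$ for all $t\geq 0$), then takes the good times $s_n$ from Lemma \ref{lem-inf}, picks an odd switching time $\tau_{2k+1}<s_n$, and concludes by contradiction: if $v(\tau_{2k+1},\cdot)$ were below $\eps_1$, smallness would persist through every subsequent regime and force $v(s_n,x)\leq\delta_1/2$ for all $x$, contradicting $\inf_E v(s_n,\cdot)\geq\delta_1$; the bound for $u$ is then asserted ``similarly.'' You instead propagate the lower bound \emph{forward} from a good time to a nearby odd switching time at the cost of a factor $e^{-2KL}$, and you manufacture the nearby-ness by a density pigeonhole: positive density of simultaneous good times (via the $\int_E uv$ functional plus the Harnack step of Lemma \ref{lem-inf}) against the vanishing long-run time-fraction of switching intervals longer than $L$.

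Comparing the two: the paper's argument is much shorter, but it leans on \eqref{eq-lm37-2} for \emph{both} components, and for $u$ that estimate is delicate in this regime (once $v$ has decayed, a small $u$ does not stay small under either system — it rebounds toward $a(\pm)/b(\pm)$); moreover the contradiction as written controls $\sup_E v(\tau_{2k+1},\cdot)$ rather than $\inf_E$, so an extra Harnack-type step is implicitly needed to match the statement of Proposition \ref{prop-1}. Your route sidesteps the stability classification entirely — it uses only $\lambda>0$ (through Theorem \ref{thm-per} via \eqref{e3.3} and \eqref{eq-324}), the Harnack inequality with nonrandom constant and fixed lag, the universal comparison with \eqref{eq-ode-below}, and exponential sojourn tails — so it in fact proves Proposition \ref{prop-1} uniformly across all three lemmas, and your $\int_E uv$ device also supplies the simultaneity of the two bounds at common times, a point Lemma \ref{lem-sup} obtains only by assertion from Theorem \ref{thm-per}. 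The price is length and the renewal-reward input; two small bookkeeping remarks there: the rewards attached to $[\tau_{m-1},\tau_m)$ depend on $(\sigma_m,\sigma_{m+1})$ and are thus $1$-dependent, so the strong law should be applied along even and odd $m$ separately, and your step one actually yields positive \emph{lower} density for $\mathcal G$, which is what makes the comparison with the a.s. limiting bad-time fraction $\beta(L)$ safe. Neither remark is a gap.
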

\begin{proof}
	As in Lemma \ref{lem-sus}, we obtain that for any $\eps>0$, there is $\eps_1=\eps_1(\eps)$ such that for any initial condition $u_0,v_0\in\C_{\eps_1,M},$
	\begin{equation}\label{eq-lm37-2}
		 u_{\pm}(t,x)\leq \eps\text{ and }v_{\pm}(t,x)\leq \eps,\;\forall t\geq 0.
	\end{equation}
	Now, by Lemma \ref{lem-inf}, there is a sequence $(s_n)\uparrow\infty$ such that 	$\inf_E u(s_n,x)\geq \delta_1>0$ and
	$\inf_E v(s_n,x)\geq \delta_1>0$ for all $n$, for some $\delta_1>0$. 
There is $\tau_{2k+1}$ such that $\tau_{2k+1} < s_n$. Let $\eps_1=\eps_1(\delta_1/2)$ as in \ref{eq-lm37-2}.
	Then, we must have $\inf_Ev(\tau_{2k+1},x)\geq \eps_1$ because otherwise, $v(s_n,x)\leq \delta_1/2$ for all $x$, which is a contradiction. Similarly, $\inf_Eu(\tau_{2k+1},x)\geq \eps_1$.
	The proof is complete.
\end{proof}

We are now in position to describe the pathwise dynamic behavior of the solutions of system \eqref{eq-main}. We consider the following systems of ODE:
\begin{equation}\label{e6+ode}
	\begin{cases}
		\dfrac{dU_+(t)}{d t}=U_+(t)(a(+)- b(+)U_+(t)-c(+)V_+(t)),\\[1ex]
		\dfrac{dV_+(t)}{dt}=V_+(t)(-d(+)+ e(+)U_+(t)-f(+)V_+(t)),
\end{cases}
\end{equation}
and
\begin{equation}\label{e6-ode}
	\begin{cases}
		\dfrac{dU_-(t)}{dt}=U_-(t)(a(-)- b(-)U_-(t)-c(-)V_-(t)),\\[1ex]
		\dfrac{dV_-(t)}{dt}=V_-(t)(-d(-)+ e(-)U_-(t)-f(-)V_-(t)).
\end{cases}
\end{equation}
For $(U_0,V_0)\in\mathbb R^2$, $U_0\geq 0, V_0\geq 0$, we denote 
$\pi^+_t (U_0, V_0) = (U_+(t), V_+(t))$
 (resp. $\pi^-_t (u, v) =(U_-(t), V_-(t))$ the solution of system \eqref{e6+ode} (resp. \eqref{e6-ode}) with initial conditions $(U_+(0),V_+(0))=(U_0,V_0)$ (resp. $(U_-(0),V_-(0))=(U_0,V_0)$).
 Let 
\begin{equation*}\label{eS}
S=\left\{(u, v)=\pi_{t_n}^{\varrho(n)}\cdots\pi_{t_1}^{\varrho(1)}\pi_{t_0}^{+}(u^*_+,v^*_+):0\leq t_0,t_1,t_2,\ldots,t_n; \; n\in \NN\right\},
\end{equation*}
where $\varrho(k)=+$ if $k$ is even, and $\varrho(k)=-$ if $k$ is odd. 

It is noted that $S$ is a subset of $\mathbb R^n$.
By abuse of notation, we still denote by $S$ the subset of $C(\bar E)$ that contains all constant functions taking values in $S$.
In the following theorem and its proof, $S$ will be understood either as subset of $\mathbb R^n$ or subset of $C(\bar E)$, and it should be clear from the context.
We make the same convention for $\pi_{t_0}^+(u^*_+,v^*_+),\dots$.
\begin{thm}\label{thm2}
With probability 1, the closure $\bar {S}$ of $S$ is a subset of the $\omega$-limit set $\Omega(u_0,v_0,\omega)$.
\end{thm}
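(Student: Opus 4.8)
The plan is to show that every point of $S$ lies in the $\omega$-limit set $\Omega(u_0,v_0,\omega)$, and then conclude by closedness of the $\omega$-limit set that $\bar S \subseteq \Omega(u_0,v_0,\omega)$. Fix a point $(u,v)=\pi_{t_n}^{\varrho(n)}\cdots\pi_{t_1}^{\varrho(1)}\pi_{t_0}^{+}(u^*_+,v^*_+)\in S$ and a tolerance $\eps>0$. The goal is to produce, almost surely, a sequence of times $T_m\to\infty$ at which the solution $(u(T_m,\cdot),v(T_m,\cdot))$ of \eqref{eq-main} is within $\eps$ (in the sup-norm on $C(\bar E)$) of the constant function $(u,v)$. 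The heuristic is that the deterministic flows $\pi^\pm$ are precisely the building blocks of the trajectory of \eqref{eq-main} between switching times, so if the switching process $\xi$ happens to stay in state $+$ for roughly time $t_0$, then in $-$ for roughly $t_1$, then in $+$ for roughly $t_2$, and so on, while starting from a configuration near the constant function $(u^*_+,v^*_+)$, the solution will trace out an approximation of the composed flow and land near $(u,v)$.

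The key steps, in order, are as follows. First I would use Proposition \ref{prop-1} together with the compactness of $\C_{\hat\delta,M}$ to guarantee that almost surely there are infinitely many odd switching indices $2k+1$ at which $\inf_E u(\tau_{2k+1},x)\geq\hat\delta$ and $\inf_E v(\tau_{2k+1},x)\geq\hat\delta$; combined with the uniform upper bound $M$ from Theorem \ref{thm-pre}, this places $(u(\tau_{2k+1},\cdot),v(\tau_{2k+1},\cdot))$ in the compact set $\C_{\hat\delta,M}$ infinitely often. Second, along such indices I would invoke Lemma \ref{lem-com}(i) to assert that after a further (deterministic, uniform over $\C_{\hat\delta,M}$) waiting time the solution, while $\xi$ remains in state $+$, enters any prescribed neighborhood $U_{\eps'}(u^*_+,v^*_+)$; the uniformity of the convergence over the compact initial set $K=\C_{\hat\delta,M}$ is exactly what this lemma supplies and is crucial here. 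Third, I would construct the desired event on the switching process: conditioned on being in state $+$ near $(u^*_+,v^*_+)$, I want $\xi$ to remain in $+$ for a duration close to $t_0$, then switch and remain in $-$ for a duration close to $t_1$, etc., for the finite block of $n+1$ prescribed dwell times. Because the sojourn times $\sigma_k$ are independent exponentials with positive density on every interval, the event that the consecutive sojourn times fall in the windows $(t_j-\eta,t_j+\eta)$ has strictly positive probability, and the independence of $\F^\infty_n$ from $\F^n_0$ (recorded in the excerpt) lets me treat each fresh visit to $\C_{\hat\delta,M}$ as a new independent trial. A Borel–Cantelli / second-Borel–Cantelli argument then forces the prescribed dwell-time pattern to occur infinitely often after such visits.

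Fourth, I would turn the near-match of dwell times into a near-match of states via continuous dependence: during each dwell period the solution of \eqref{eq-main} coincides with one of the deterministic flows $\pi^\pm$, so if the starting configuration is $\eps'$-close to the constant $(u^*_+,v^*_+)$ and the successive dwell times are $\eta$-close to $t_0,\dots,t_n$, then by continuity of the finite composition $\pi_{t_n}^{\varrho(n)}\cdots\pi_{t_0}^+$ in both the initial datum and the time parameters, the resulting state is within $\eps$ of $(u,v)$; here I would use that a uniformly-small-in-sup-norm perturbation of a constant initial function propagates to a uniformly-small perturbation under the parabolic flow (comparison/maximum-principle estimates as in Lemma \ref{lem-com} and Theorem \ref{thm-pre}), so that working with constant initial data for the flows $\pi^\pm$ is legitimate up to controllable error. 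Letting $\eps\downarrow 0$ along a sequence and extracting the infinitely-many landing times $T_m\to\infty$ shows $(u,v)\in\Omega(u_0,v_0,\omega)$ almost surely; since $S$ is separable, a single almost-sure event works simultaneously for a countable dense subset, and closedness of $\Omega$ upgrades this to $\bar S\subseteq\Omega(u_0,v_0,\omega)$.

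The main obstacle I expect is the fourth step: controlling the propagation of errors through the composition of infinitely-many possible flow segments in the infinite-dimensional state space $C(\bar E)$, uniformly enough that the accumulated discrepancy between the true PDE trajectory and the idealized composed ODE/PDE flow stays below $\eps$. The delicate point is that the dwell-time windows must be chosen jointly with the neighborhood sizes so that the contraction/attraction supplied by Lemma \ref{lem-com}(i) near the stable equilibria dominates the expansion introduced by imprecise switching times; this requires choosing the tolerances $\eta$ and $\eps'$ in the right order (shrinking $\eta$ after fixing the Lipschitz constants of the finite flow composition on a compact region) and using the uniform-in-$K$ convergence rates rather than merely pointwise convergence. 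Handling the case where one or both equilibria are unstable (so that Lemma \ref{lem-com}(i) does not apply directly) is subsumed because $S$ is anchored at $(u^*_+,v^*_+)$ and built only from the flows themselves, but verifying that the anchoring configuration is actually reached infinitely often reduces precisely to Proposition \ref{prop-1}, which is why that proposition is the workhorse of the argument.
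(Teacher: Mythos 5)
Your proposal is correct in substance and uses exactly the paper's ingredients --- Proposition \ref{prop-1} to get infinitely many returns to the compact set $\C_{\delta,M}$ at odd switching times, Lemma \ref{lem-com}(i) (uniform attraction over compact sets of initial data) to land near the constant function $(u^*_+,v^*_+)$ during a long dwell in state $+$, the independence of $\mathscr F^n_0$ and $\mathscr F^\infty_n$ together with the sojourn-time argument of \cite[Theorem 2.2]{du2011dynamics} to force prescribed dwell-time windows infinitely often, continuous dependence on initial data, and closedness of $\Omega(u_0,v_0,\omega)$ --- but it organizes the probabilistic scheduling differently. You realize the \emph{entire} block of $n+1$ dwell-time windows $(t_j-\eta,\,t_j+\eta)$ in one shot after a single return near $(u^*_+,v^*_+)$, via one Borel--Cantelli argument over non-overlapping trials, and then control the error through the full finite composition $\pi^{\varrho(n)}_{t_n}\cdots\pi^{+}_{t_0}$ at once; the paper instead proceeds inductively, one flow segment at a time: it first shows $(u^*_+,v^*_+)\in\Omega(u_0,v_0,\omega)$ using a block of only three sojourn times, then defines a \emph{fresh} sequence of stopping times $\zeta_k$ recording the (already established) infinitely many visits to a neighborhood $U_{\eps_2}(u^*_+,v^*_+)$, requires a single sojourn $\sigma_{\zeta_n+1}\in(t_2,t_3)$ to conclude $\pi^-_{t_1}(u^*_+,v^*_+)\in\Omega(u_0,v_0,\omega)$, and iterates. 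Your one-shot scheme is logically fine but forces the bookkeeping you flag yourself: the window width $\eta$ and the neighborhood size $\eps'$ must be chosen \emph{after} fixing a modulus of continuity (Lipschitz bound) for the whole $n$-fold composition on a compact region, whereas the paper's induction only ever needs continuity of one segment $\pi^\pm_t$ in initial data and time, with the attraction step resetting the error at each stage. One point where you are actually more careful than the paper: you note explicitly that the a.s.\ statement is obtained for a countable dense subset of $S$ first and then upgraded by closedness of the $\omega$-limit set, a step the paper leaves implicit in ``by induction \dots closedness of $\Omega$''. A caveat applying equally to both arguments: the appeal to Lemma \ref{lem-com}(i) presupposes the stable regime $a(+)/b(+)>d(+)/e(+)$ for \eqref{e6+}, and neither your sketch nor the paper's proof of Theorem \ref{thm2} treats the anchoring step when $(u^*_+,v^*_+)$ is unstable (Proposition \ref{prop-1} guarantees returns to $\C_{\hat\delta,M}$ in all three cases, but not convergence toward $(u^*_+,v^*_+)$), so your closing remark that the unstable cases are ``subsumed'' would need the same additional justification the paper omits.
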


\begin{proof}
We construct a sequence of stopping times
\begin{eqnarray*}
\eta_1&=&\inf\{2k+1: (u(\tau_{2k+1},x), v(\tau_{2k+1},x))\in 
\C_{\delta,M}\},\\
\eta_2&=&\inf\{2k+1>\eta_1: (u(\tau_{2k+1},x), v(\tau_{2k+1},x))\in \C_{\delta,M}\},\\
\cdots&&\\
\eta_n&=&\inf\{2k+1>\eta_{n-1}: (u(\tau_{2k+1},x), v(\tau_{2k+1},x))\in \C_{\delta,M}\},\ldots
\end{eqnarray*}
It is easy to see that  $\{\eta_k=n\}\in\mathscr{F}_0^n$ for any $k, n$. Thus, the event $\{\eta_k=n\}$ is independent of $\mathscr{F}_n^{\infty}$ if $\xi_0$ is given. By Proposition \ref{prop-1}, $\eta_n<\infty$ a.s. for all $n$.

 For any $k\in\mathcal N$ and $s_1 > 0$, $s_3>s_2 > 0$, $s_4>0$,
let 
$$B_k:=\{\omega: \sigma_{\eta_k+1}<s_1, s_2<\sigma_{\eta_k+2}<s_3, \sigma_{\eta_k+3}<s_4
\}.$$
Using arguments similar to the proof of \cite[Theorem 2.2]{du2011dynamics}, we obtain that
${\mathbb P}\biggl(\bigcap_{k=1}^{\infty}\bigcup_{i=k}^{\infty}B_i\biggl)=1.$
As a result,
\begin{equation}\label{eq-i.o}
	{\PP}\{\omega: \sigma_{\eta_n+1}<s_1, s_2<\sigma_{\eta_n+2}<s_3, \sigma_{\eta_n+3}<s_4 \mbox{ i.o. of}\;n\}=1.
\end{equation}

By comparing $(u(t,x),v(t,x))$ with equation \eqref{eq-ode-below} as in proof of Lemma \ref{lem-45}, we obtain that: given $\inf_Eu(\tau_{\eta_k},x)\geq \eps$ and $\inf_Ev(\tau_{\eta_k},x)\geq \eps$, then $\inf_Eu(t+\tau_{\eta_k},x)\geq \eps e^{-Kt}$ and $\inf_Ev(t + \tau_{\eta_k},x) \geq \eps e^{-Kt}$ for all $t > 0$.

Let $U_\eps$ is any $\eps$-neighborhood (in $\mathbb R^2$) of the $(u^*
_+, v^*_+)$. 
By Lemma \ref{lem-com}, there is $T > 0$ such that
$(u_+(t, x), y_+(t, x)) \in U_{\eps}\;\forall x$ for any $t > T$ provided that the initial condition $(u_0, v_0) \in \C_{\delta,M}$. Therefore, $(u(\tau_{\eta_k+2},x),v(\tau_{\eta_k+2},x)) \in U_\eps\;\forall x$ provided
$\sigma_{\eta_k+1} < s$, $\sigma_{\eta_k+2} > T.$ As a result, $(u(\tau_{2k+3},x), v(\tau_{2k+3},x)) \in U_\eps\;\forall x$ for infinite many $k$. This means that the constant functions $(x^*_+, y^*_+)\in 
\Omega(u_0, v_0,\omega)$ a.s.

The next step is to prove that $\{\pi^-_t(u^*_+, v^*_+):t\geq 0\}\subset \Omega(u_0,v_0,\omega)$ a.s. 
Let $t_1$ be an arbitrary positive number.
 In view of the continuity of the solution in the initial values, for any neighborhood $U_{\eps_1}(\pi^-_{t_1}(u^*_+, v^*_+))$ in $\mathbb R^2$ of $\pi^-_{t_1}(u^*_+, v^*_+)$, there are $0<t_2<t_1<t_3$ and $\delta_2>0$ such that if $(u_0, v_0)$ is such that $(u_0(x),v_0(x))$ belongs to the neighbor $U_{\eps_2}(u^*_+,v^*_+)$ of $(u^*_+,v^*_+)$ then $(u_-(t,x),v_-(t,x))\in U_{\eps_1}(\pi^-_{t_1}(u^*_+, v^*_+))\;\forall x$  for any $t_2<t<t_3.$
Put
\begin{eqnarray*}
\zeta_1&=&\inf\{2k+1: (u(\tau_{2k+1},x), v(\tau_{2k+1},x))\in U_{\eps_2}\},\\
\zeta_2&=&\inf\{2k+1>\zeta_1: (u(\tau_{2k+1},x), v(\tau_{2k+1},x))\in U_{\eps_2}\},\\
\cdots&&\\
\zeta_n&=&\inf\{2k+1>\zeta_{n-1}: (u(\tau_{2k+1},x), v(\tau_{2k+1},x))\in U_{\eps_2}\}\ldots
\end{eqnarray*}
From the previous part of this proof, it follows that $\zeta_k<\infty$  and $\lim\limits_{k\to\infty}\zeta_k=\infty$ a.s.. Since $\{\zeta_k=n\}\in\mathscr{F}_0^n$, $\{\zeta_k\}$ is independent of $\mathscr{F}_n^{\infty}.$ 
By the same argument as \cite[Theorem 2.2]{du2011dynamics}, we obtain $${\mathbb P}\{\omega: \sigma_{\zeta_n+1}\in(t_2, t_3) \mbox{ i.o. of \;}n\}=1.$$ This relation means that  
$(u(\tau_{\zeta_k+1},x), v(\tau_{\zeta_k+1},x))\in U_{\eps_1}(\pi^-_{t_1}(u^*_+, v^*_+))\;\forall x$ for many infinite  $k\in \NN$. This means that $\pi^-_{t_1}(u^*_+, v^*_+)\in \Omega(u_0,v_0,\omega)$ a.s. 

Similarly, for any $t>0$, the orbit  $\{\pi_s^+\pi_t^-(u^*_+, v^*_+): s>0\}\subset \Omega(u_0,v_0,\omega)$. By induction, we conclude that $S$ is a subset of $\Omega(u_0, v_0).$ It follows from the closedness of  $\Omega(u_0,v_0,\omega)$ that $\Bar{S}\subset \Omega(u_0,v_0,\omega)$ a.s.
\end{proof}

\section{Numerical Simulations}\label{sec:num}
In this section, we present some numerical examples to illustrate our theoretical results.
These examples will also demonstrate interesting effects of discrete events process $\xi(t)$.

\begin{exam}\label{exam5.2}
	We examine system \eqref{eq-main} in $E=[0,1]$.
	The other parameters are
	$q(+)=q(-)=5$,
	$\alpha_1(+)=\alpha_1(-)=\alpha_2(+)=\alpha_2(-)=1$,
	$a(+)=1$,
	$b(+)=1$,
	$c(+)=1$,
	$d(+)=2$,
	$e(+)=1$,
	$f(+)=1$,
	$a(-)=3$,
	$b(-)=1$,
	$c(-)=1$,
	$d(-)=7$,
	$e(-)=2$,
	$f(-)=1$; the initial conditions are $u_0(x)=2\cos (\pi x)+2$, and $v_0(x)=2\sin^2(\pi x)$.

Our theoretical results allow us to characterize the long-term behavior of the system through its $\omega$-limit set, which is determined by the quantity $\lambda$ defined in \eqref{e8}. In this example, direct computation yields $\lambda = -3$. Therefore, by Theorems \ref{thm-ext} and \ref{thm-omega-ext}, we conclude that $v(t,x) \to 0$ uniformly, and the $\omega$-limit set of $u(t,x)$ is $\mathcal{K}^*$, the space of constant functions taking values in the interval $\left[\frac{a(+)}{b(+)}, \frac{a(-)}{b(-)}\right]=[1,3]$.
This behavior is illustrated in Figures \ref{f5}. Starting from the initial condition $u_0(x) = 2\cos(\pi x) + 2$ and $v_0(x) = 2\sin^2(\pi x)$ at $t = 0$, the solution $(u(t,x), v(t,x))$ approaches its $\omega$-limit set $\mathcal{K}^*$ as $t$ increases. The simulations show that for $t \geq 1$, $u(t,x)$ becomes nearly spatially constant with values lying in $[1,3]$, while $v(t,x)$ becomes uniformly close to zero. The influence of environmental switching is also evident in the figures: although the limiting functions are constant in space, their values fluctuate within the interval $[1,3]$ due to the stochastic switching dynamics.

	\begin{figure}[H]
		\subfigure{\includegraphics[height=0.32\textheight,width=0.52\textwidth]{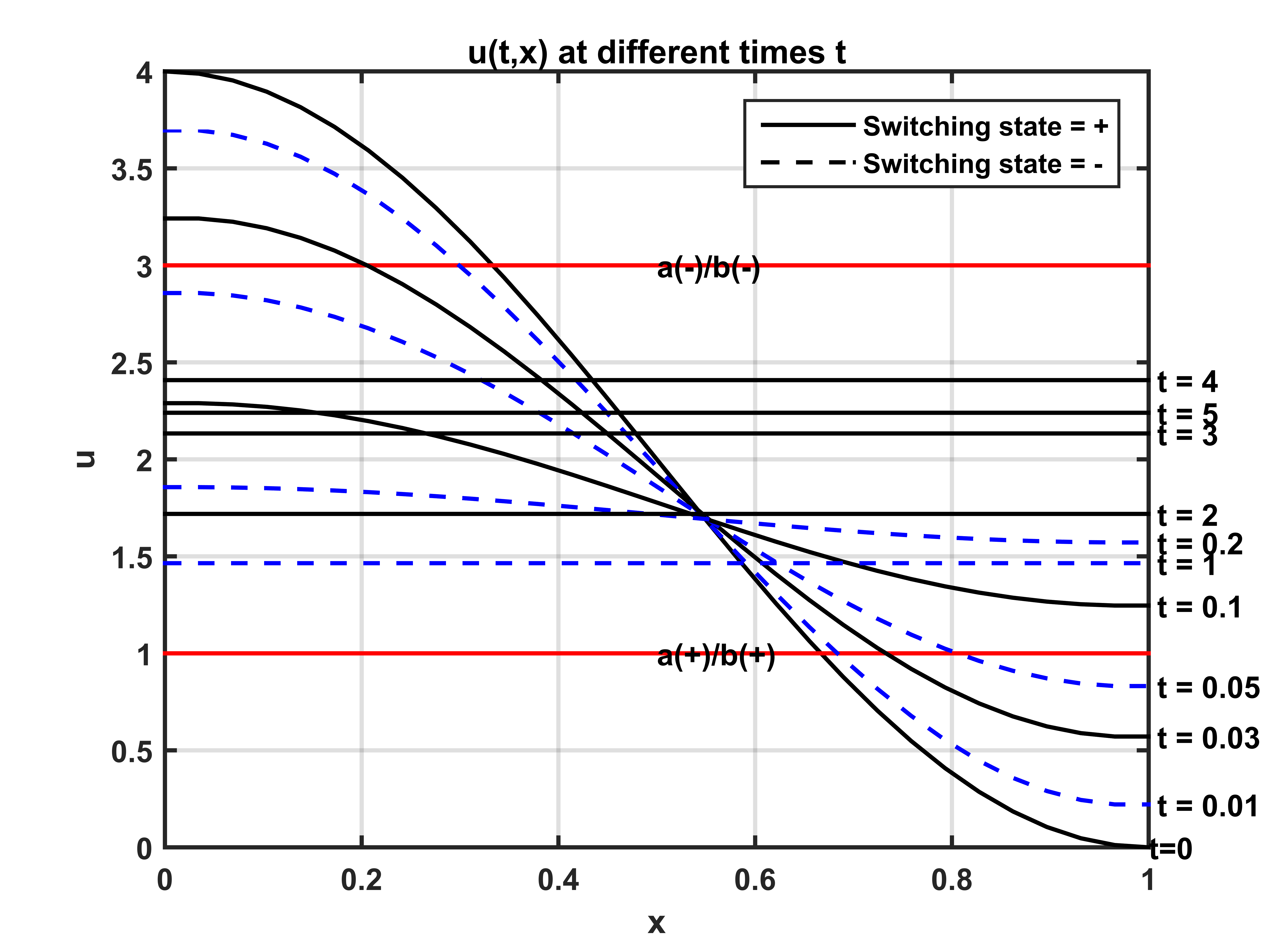}}
		\subfigure{\includegraphics[height=0.32\textheight,width=0.52\textwidth]{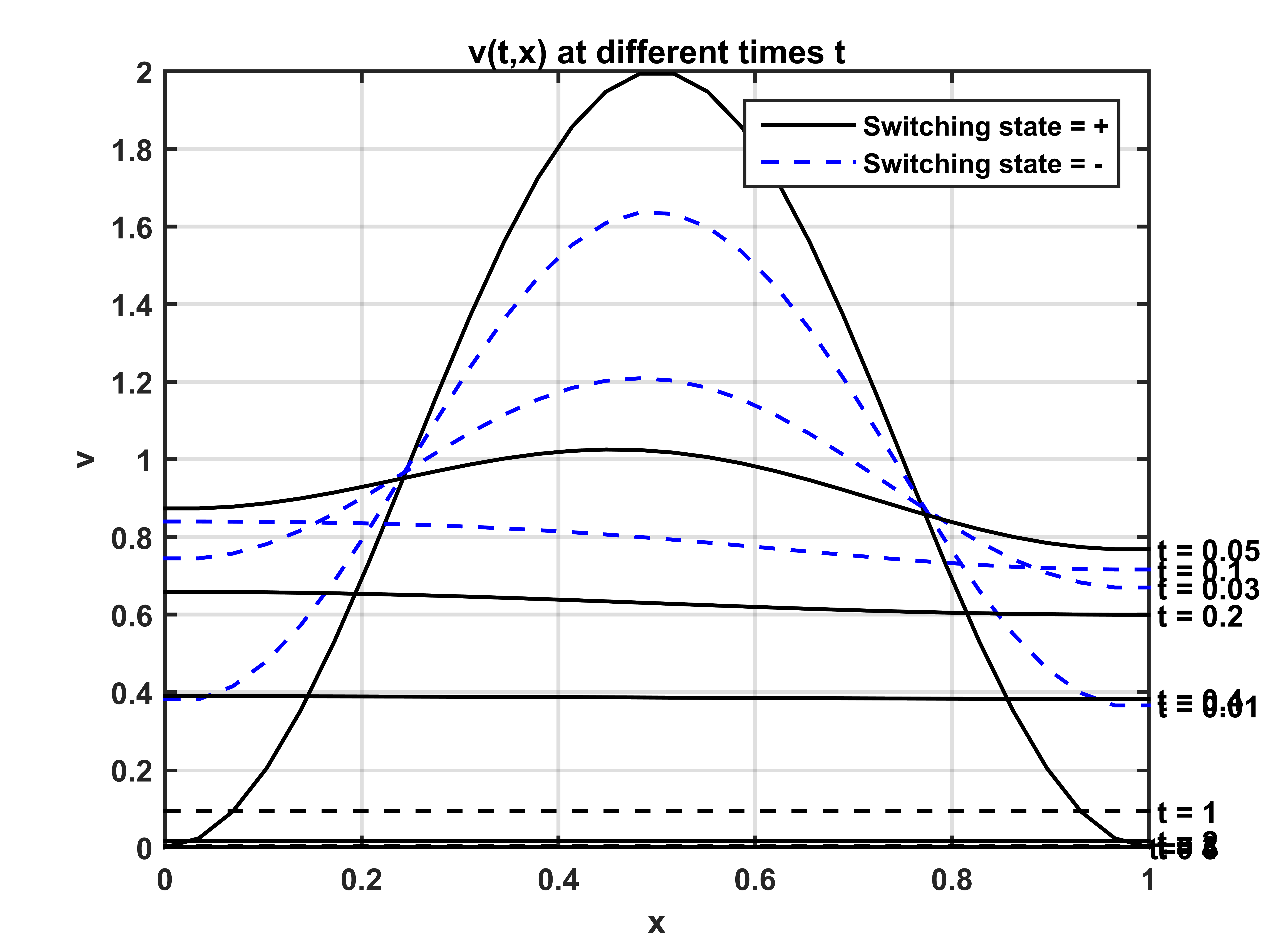}}
        \subfigure{\includegraphics[height=0.32\textheight,width=0.52\textwidth]{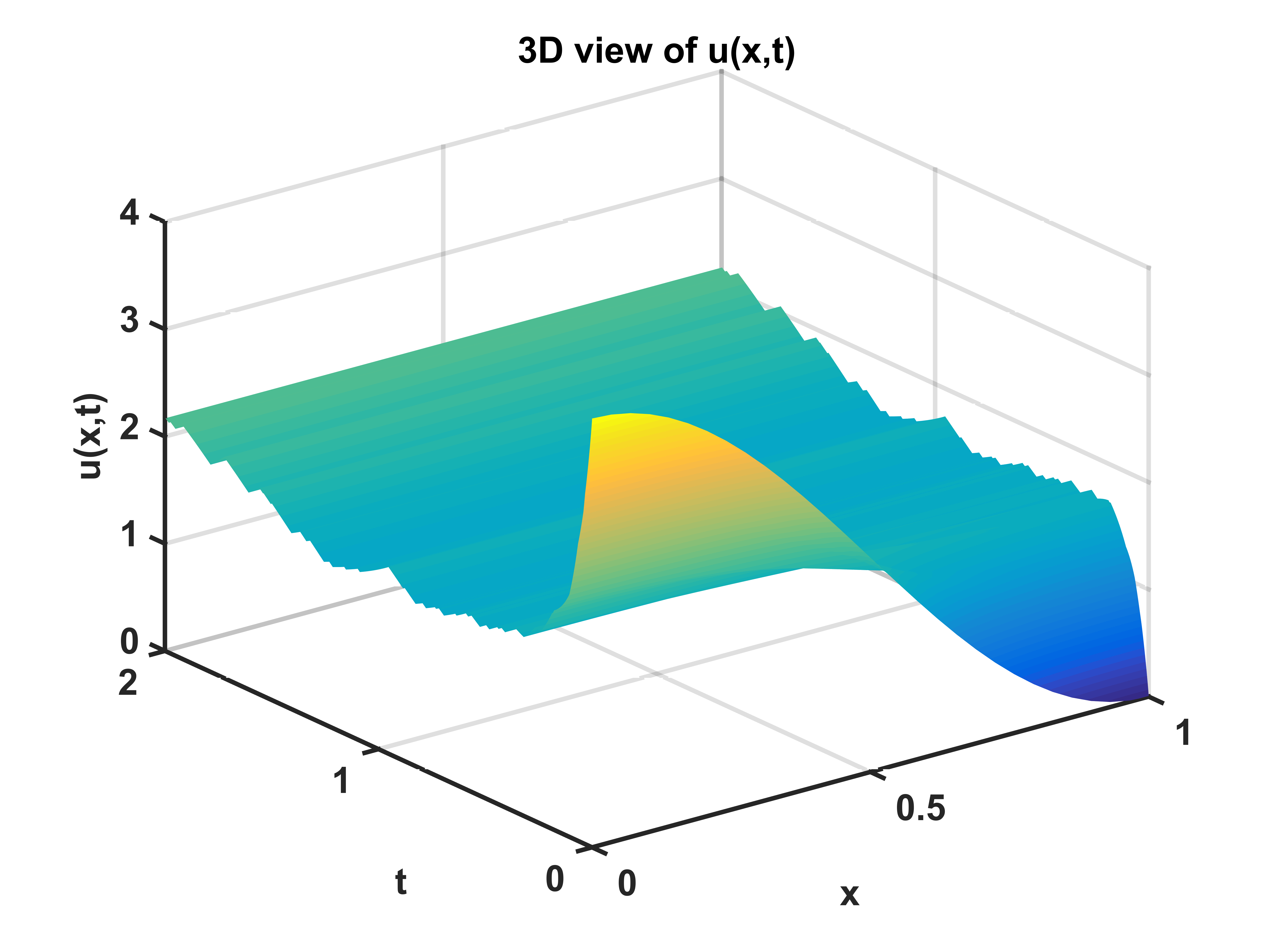}}
	\subfigure{\includegraphics[height=0.32\textheight,width=0.52\textwidth]{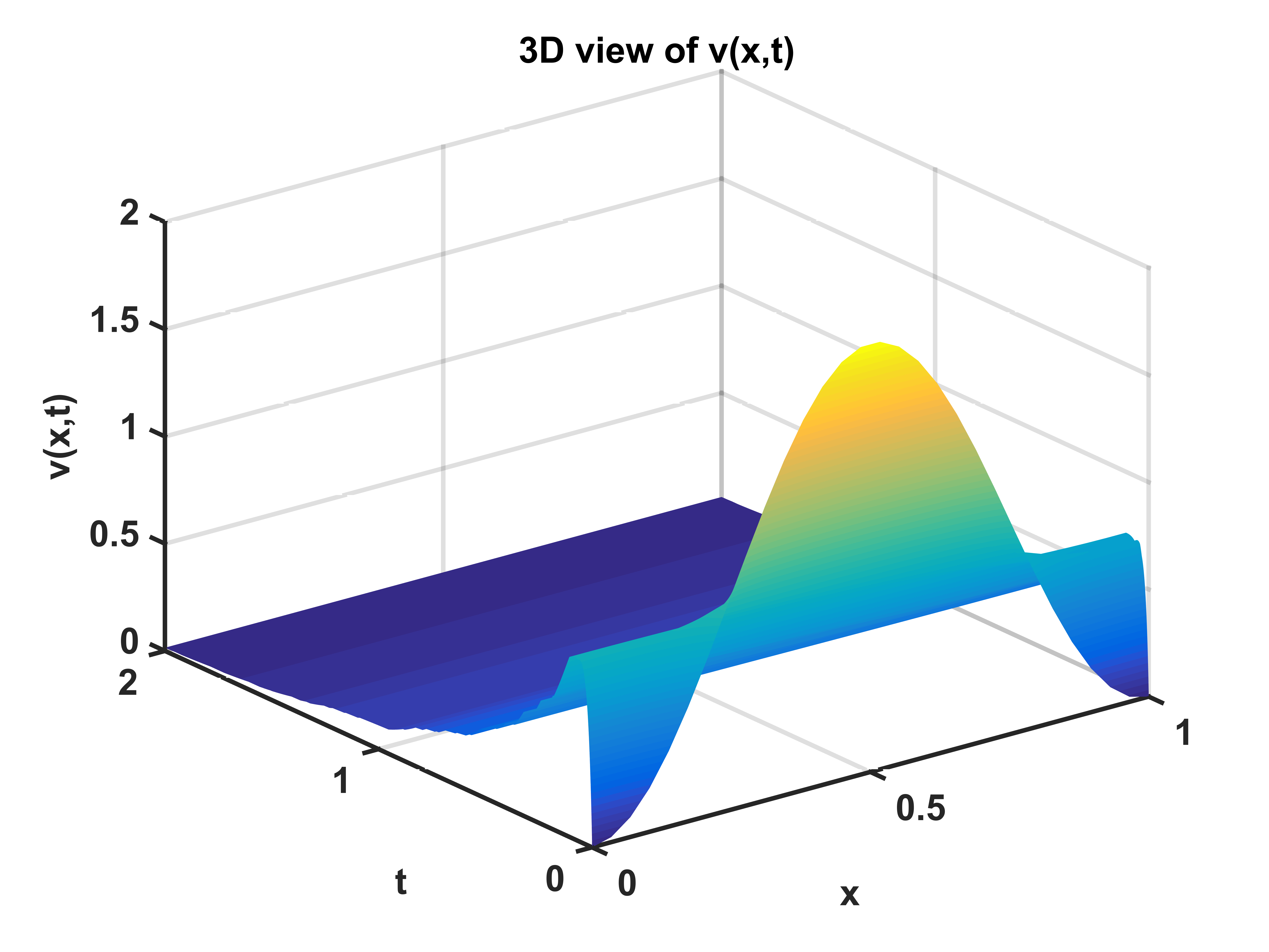}}
		\caption{2D and 3D simulations for $u(t,x)$ and $v(t,x)$ in Example \ref{exam5.2}.}\label{f5}
	\end{figure}
\end{exam}

\begin{exam}\label{exam5.3}
	We examine system \eqref{eq-main} in $E=[0,1]$.
	The other parameters are
	$q(+)=q(-)=5$,
	$\alpha_1(+)=\alpha_1(-)=\alpha_2(-)=\alpha_2(-)=1$,
	$a(+)=5$,
	$b(+)=1$,
	$c(+)=1$,
	$d(+)=0.5$,
	$e(+)=2$,
	$f(+)=1$,
	$a(-)=15$,
	$b(-)=1$,
	$c(-)=1$,
	$d(-)=1$,
	$e(-)=2$,
	$f(-)=3$; the initial conditions are $u_0(x)=5\cos (\pi x)+2$, and $v_0(x)=5\sin^2(\pi x)$.

	{\color{blue}
	In this example, 
	$\lambda=38.5$.
	Theorems \ref{thm-per} and \ref{thm2} show that \eqref{eq-main} is persistent and the solution approaches $\bar S$ (given in \eqref{eS}) as $t$ becomes larger and larger. 
	These results are shown in Figure \ref{f6}.
	Precisely, starting from the initial condition $(5\cos (\pi x)+2,5\sin^2(\pi x))$, for $t \geq 2$, $u(t,x)$ and $v(t,x)$ becomes (almost) constant with values fluctuate within the intervals, which are highlighted in red in the 2D figure.
	}
	\begin{figure}[H]
		\subfigure{\includegraphics[height=0.32\textheight,width=0.52\textwidth]{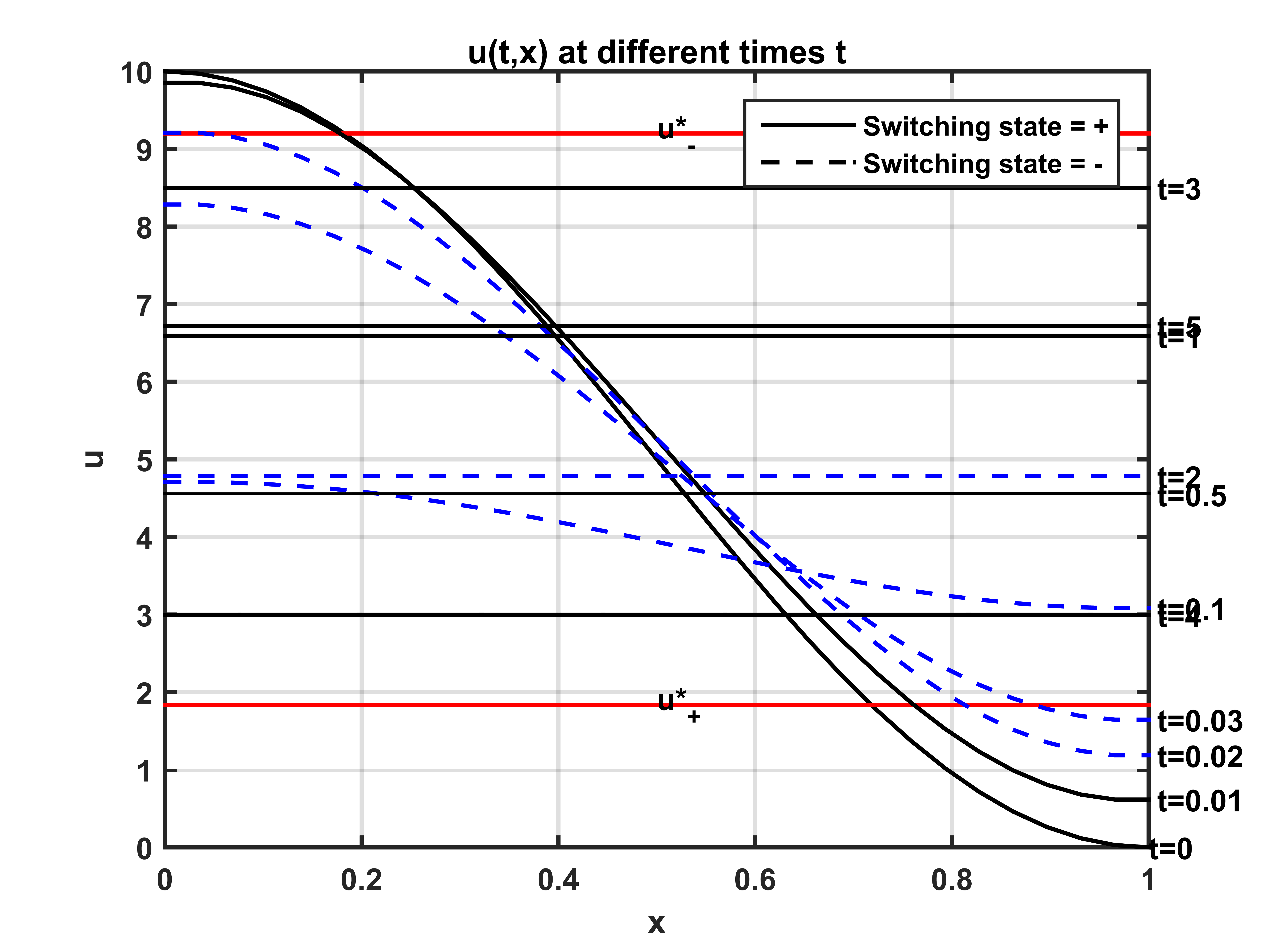}}
		\subfigure{\includegraphics[height=0.32\textheight,width=0.52\textwidth]{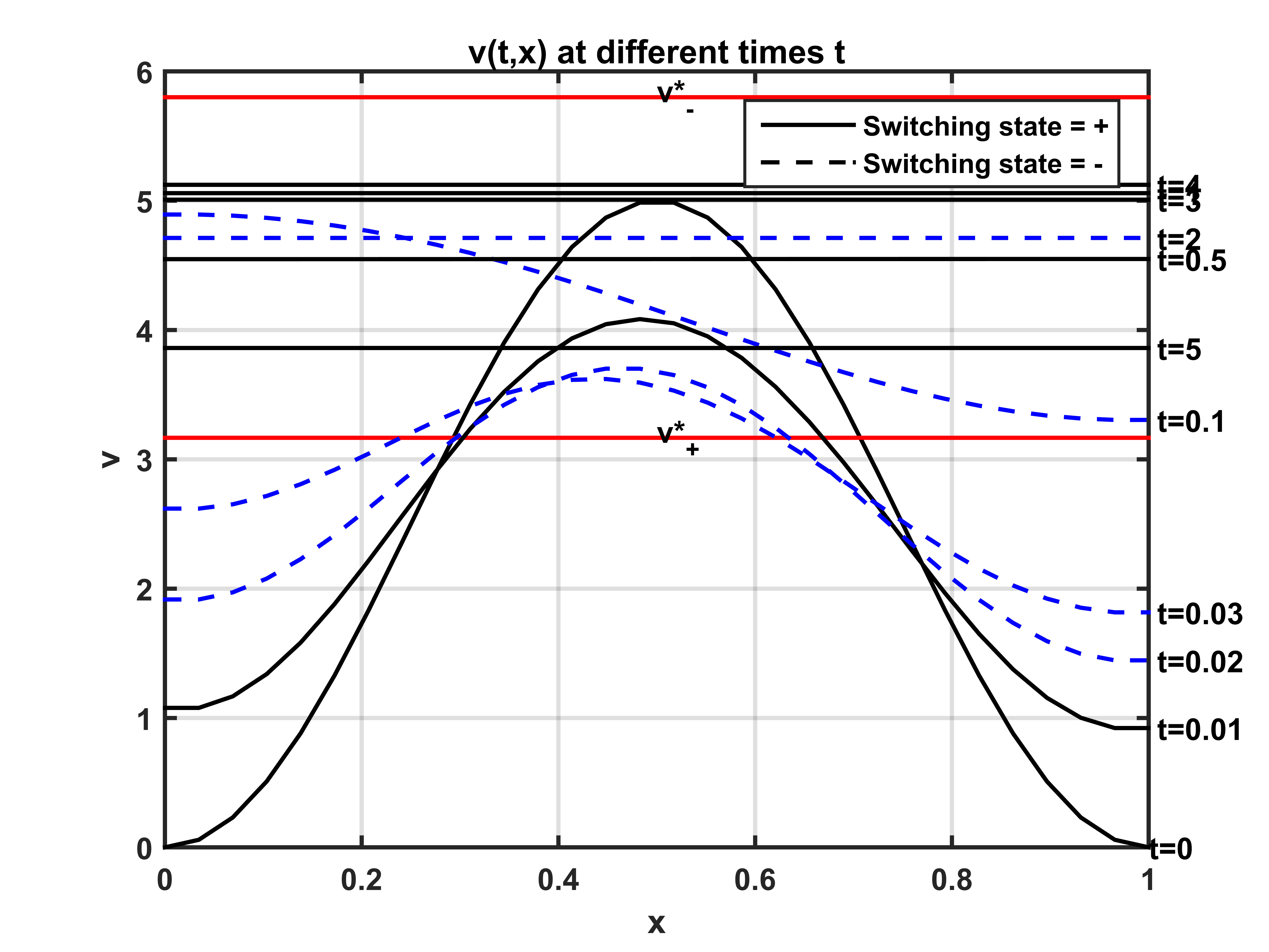}}
        \subfigure{\includegraphics[height=0.32\textheight,width=0.52\textwidth]{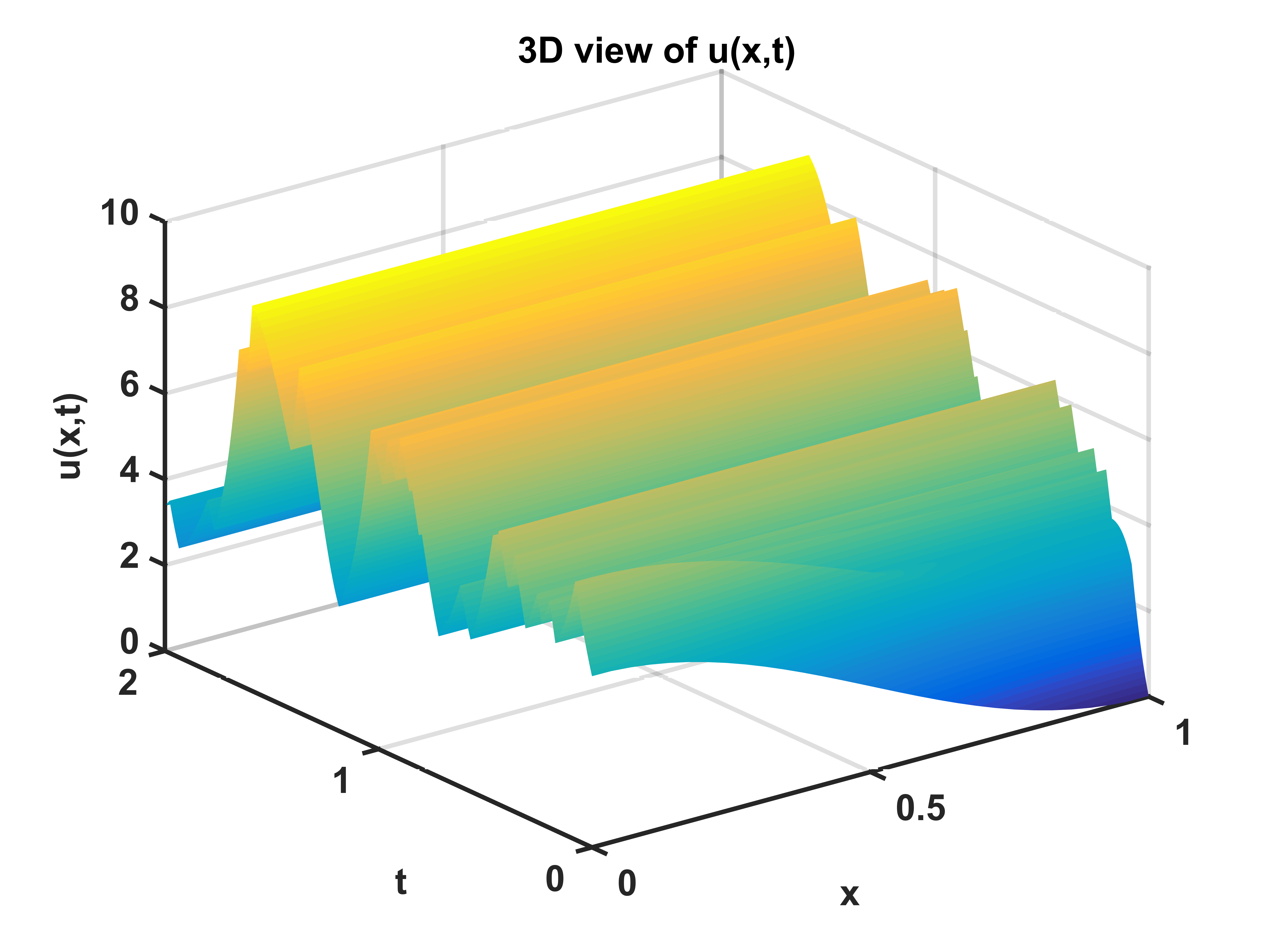}}
		\subfigure{\includegraphics[height=0.32\textheight,width=0.52\textwidth]{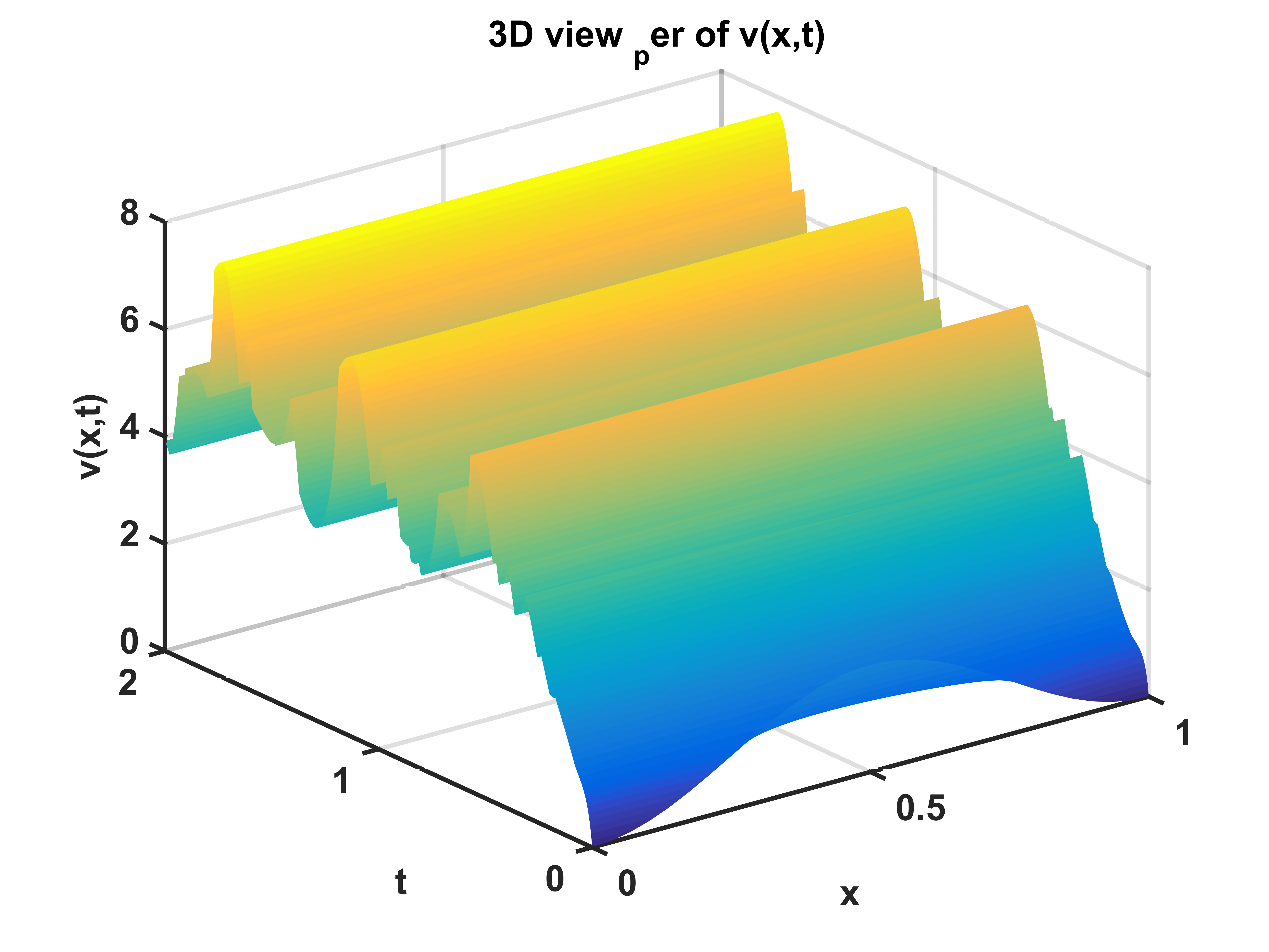}}
		\caption{2D and 3D simulations for $u(t,x)$ and $v(t,x)$ in Example \ref{exam5.3}. 
			}\label{f6}
	\end{figure}
\end{exam}

\section{Conclusions}\label{sec:con}
	In this paper, we introduced and analyzed a novel class of spatially heterogeneous predator-prey models governed by reaction-diffusion partial differential equations under random environmental switching. 
	In contrast to the existing studies, which either consider stochastic ODE model (that does not capture space evolution) or PDE model (that does not take randomness into the consideration),
	the incorporation of Markovian switching into the classical spatial predator-prey framework allows us to better capture the influence of abrupt and discrete environmental changes, such as seasonal shifts or extreme events on species dynamics. This hybrid modeling approach bridges deterministic reaction-diffusion systems and stochastic systems driven by discrete randomness, offering a more realistic and flexible description of ecological processes.
	
	Our contributions are twofold. First, we examined the hybrid stochastic PDE system to characterize the long-time behavior of solutions, identifying sharp threshold conditions that distinguish between extinction and persistence regimes for both prey and predator populations. These thresholds are determined by an appropriately defined Lyapunov exponent from dynamical system theory. Second, we characterized the pathwise $\omega$-limit set of the system and provided rigorous classifications of its attractors, offering a detailed understanding of the stochastic trajectories’ limiting behavior.
	
	In addition, we developed new analytical tools that go beyond traditional Lyapunov functionals, allowing for a deeper examination of pathwise dynamics under both spatial and temporal randomness. Our findings were further supported by numerical simulations, which demonstrate the rich and sometimes counterintuitive behaviors induced by random switching.
	Overall, our study not only advances the theoretical understanding of stochastic hybrid reaction-diffusion systems in ecology but also opens new directions for applying such models to broader classes of biological and physical phenomena where spatial structure and environmental variability play a crucial role.

Future work could extend these results to more complex interaction networks, nonlocal diffusion, or systems with delay and memory effects.

\bibliographystyle{plain}
\bibliography{PDE-prey-predator}

\begin{thebibliography}{10}

\bibitem{agresti2024reaction}
Antonio Agresti and Mark Veraar.
\newblock Reaction-diffusion equations with transport noise and critical
  superlinear diffusion: Global well-posedness of weakly dissipative systems.
\newblock {\em SIAM Journal on Mathematical Analysis}, 56(4):4870--4927, 2024.

\bibitem{ai2017traveling}
Shangbing Ai, Yihong Du, and Rui Peng.
\newblock Traveling waves for a generalized holling--tanner predator--prey
  model.
\newblock {\em Journal of Differential Equations}, 263(11):7782--7814, 2017.

\bibitem{arditi1989coupling}
Roger Arditi and Lev~R Ginzburg.
\newblock Coupling in predator-prey dynamics: ratio-dependence.
\newblock {\em Journal of theoretical biology}, 139(3):311--326, 1989.

\bibitem{aronson1967local}
Don~G Aronson and James Serrin.
\newblock Local behavior of solutions of quasilinear parabolic equations.
\newblock {\em Archive for Rational Mechanics and Analysis}, 25(2):81--122,
  1967.

\bibitem{beddington1975mutual}
John~R Beddington.
\newblock Mutual interference between parasites or predators and its effect on
  searching efficiency.
\newblock {\em The Journal of Animal Ecology}, pages 331--340, 1975.

\bibitem{bendahmane2023stochastic}
M~Bendahmane, H~Nzeti, J~Tagoudjeu, and M~Zagour.
\newblock Stochastic reaction--diffusion system modeling predator--prey
  interactions with prey-taxis and noises.
\newblock {\em Chaos: An Interdisciplinary Journal of Nonlinear Science},
  33(7), 2023.

\bibitem{bentout2024asymptotic}
Soufiane Bentout and Salih Djilali.
\newblock Asymptotic profiles of a generalized reaction-diffusion sis epidemic
  model with spatial heterogeneity.
\newblock {\em Zeitschrift f{\"u}r angewandte Mathematik und Physik},
  75(6):1--31, 2024.

\bibitem{cheng2024space}
Jun Cheng, Na~Liu, Leszek Rutkowski, Jinde Cao, Huaicheng Yan, and Liang Hua.
\newblock Space--time sampled-data control for memristor-based
  reaction-diffusion neural networks with nonhomogeneous sojourn probabilities.
\newblock {\em IEEE Transactions on Circuits and Systems I: Regular Papers},
  2024.

\bibitem{conway1977comparison}
Edward~D Conway and Joel~A Smoller.
\newblock A comparison technique for systems of reaction-diffusion equations.
\newblock {\em Communications in Partial Differential Equations},
  2(7):679--697, 1977.

\bibitem{crauel1994attractors}
Hans Crauel and Franco Flandoli.
\newblock Attractors for random dynamical systems.
\newblock {\em Probability Theory and Related Fields}, 100:365--393, 1994.

\bibitem{dieu2016protection}
Nguyen~Thanh Dieu, Nguyen~Huu Du, HD~Nguyen, and George Yin.
\newblock Protection zones for survival of species in random environment.
\newblock {\em SIAM Journal on Applied Mathematics}, 76(4):1382--1402, 2016.

\bibitem{djilali2024generalities}
Salih Djilali.
\newblock Generalities on a delayed spatiotemporal host--pathogen infection
  model with distinct dispersal rates.
\newblock {\em Mathematical Modelling of Natural Phenomena}, 19:11, 2024.

\bibitem{djilali2024dynamics}
Salih Djilali, Soufiane Bentout, and Abdessamad Tridane.
\newblock Dynamics of a generalized nonlocal dispersion sis epidemic model.
\newblock {\em Journal of Evolution Equations}, 24(4):83, 2024.

\bibitem{djilali2025dynamics1}
Salih Djilali, Yuming Chen, and Soufiane Bentout.
\newblock Dynamics of a delayed nonlocal reaction--diffusion heroin epidemic
  model in a heterogenous environment.
\newblock {\em Mathematical Methods in the Applied Sciences}, 48(1):273--307,
  2025.

\bibitem{djilali2025dynamics}
Salih Djilali, Ghilmana Sarmad, and Abdessamad Tridane.
\newblock Dynamics and asymptotic profiles of a local-nonlocal dispersal sir
  epidemic model with spatial heterogeneity.
\newblock {\em Infectious Disease Modelling}, 10(2):387--409, 2025.

\bibitem{dl1975model}
DEANGELIS DL.
\newblock A model for trophic interaction.
\newblock {\em Ecology}, 56:881--892, 1975.

\bibitem{du2011dynamics}
Nguyen~Huu Du and Nguyen~Hai Dang.
\newblock Dynamics of kolmogorov systems of competitive type under the
  telegraph noise.
\newblock {\em Journal of Differential Equations}, 250(1):386--409, 2011.

\bibitem{du2016conditions}
Nguyen~Huu Du, Dang~Hai Nguyen, and G~George Yin.
\newblock Conditions for permanence and ergodicity of certain stochastic
  predator--prey models.
\newblock {\em Journal of Applied Probability}, 53(1):187--202, 2016.

\bibitem{fisher1937wave}
Ronald~Aylmer Fisher.
\newblock The wave of advance of advantageous genes.
\newblock {\em Annals of eugenics}, 7(4):355--369, 1937.

\bibitem{gihman2012controlled}
Iosif~Il'ich Gihman and Anatolij~Vladimirovi{\v{c}} Skorohod.
\newblock {\em Controlled stochastic processes}.
\newblock Springer Science \& Business Media, 2012.

\bibitem{holling1959components}
Crawford~S Holling.
\newblock The components of predation as revealed by a study of small-mammal
  predation of the european pine sawfly1.
\newblock {\em The canadian entomologist}, 91(5):293--320, 1959.

\bibitem{hu2022analysis}
Jing Hu, Anke Meyer-Baese, and Qimin Zhang.
\newblock Analysis of a stochastic reaction--diffusion alzheimer’s disease
  system driven by space--time white noise.
\newblock {\em Applied Mathematics Letters}, 134:108308, 2022.

\bibitem{kao2014global}
Yonggui Kao, Changhong Wang, HAMID~REZA Karimi, and Ran Bi.
\newblock Global stability of coupled markovian switching reaction--diffusion
  systems on networks.
\newblock {\em Nonlinear Analysis: Hybrid Systems}, 13:61--73, 2014.

\bibitem{kolmogorov1937etude}
Andrei Kolmogorov.
\newblock {\'E}tude de l’{\'e}quation de la diffusion avec croissance de la
  quantit{\'e} de mati{\`e}re et son application {\`a} un probl{\`e}me
  biologigue.
\newblock {\em Moscow Univ. Bull. Ser. Internat. Sect. A}, 1:1, 1937.

\bibitem{lam2016emergence}
King-Yeung Lam, Yuan Lou, and Frithjof Lutscher.
\newblock The emergence of range limits in advective environments.
\newblock {\em SIAM Journal on Applied Mathematics}, 76(2):641--662, 2016.

\bibitem{leung1978limiting}
Anthony Leung.
\newblock Limiting behaviour for a prey-predator model with diffusion and
  crowding effects.
\newblock {\em J. Math. Biol}, 6(1):87--93, 1978.

\bibitem{li2025synchronization}
Guoyi Li, Jun Wang, Kaibo Shi, Xiao Cai, Shiyu Dong, and Shiping Wen.
\newblock Synchronization of markovian reaction--diffusion neural networks in
  complex noise settings: A time-space control approach.
\newblock {\em Communications in Nonlinear Science and Numerical Simulation},
  147:108800, 2025.

\bibitem{li2011dynamics}
Haiyin Li and Yasuhiro Takeuchi.
\newblock Dynamics of the density dependent predator--prey system with
  beddington--deangelis functional response.
\newblock {\em Journal of Mathematical Analysis and Applications},
  374(2):644--654, 2011.

\bibitem{li2024well}
Jiayang Li and Xiangjun Wang.
\newblock Well-posedness and error analysis of wave equations with markovian
  switching.
\newblock {\em Mathematical Methods in the Applied Sciences},
  47(16):12800--12815, 2024.

\bibitem{li2023diffusion}
Jiayang Li, Zhikun Zhang, Min Dai, Ju~Ming, and Xiangjun Wang.
\newblock Diffusion equations with markovian switching: Well-posedness,
  numerical generation and parameter inference.
\newblock {\em Chaos, Solitons \& Fractals}, 171:113488, 2023.

\bibitem{li2025lotka}
Qiang Li, Dongmei Xiao, and Peng Zhou.
\newblock On a lotka-volterra competitive patch model: The role of advection.
\newblock {\em Journal of Differential Equations}, 437:113278, 2025.

\bibitem{li2018asymptotic}
Shanbing Li and Jianhua Wu.
\newblock Asymptotic behavior and stability of positive solutions to a
  spatially heterogeneous predator--prey system.
\newblock {\em Journal of Differential Equations}, 265(8):3754--3791, 2018.

\bibitem{liu2018persistence}
Meng Liu, Chenxi Du, and Meiling Deng.
\newblock Persistence and extinction of a modified leslie--gower holling-type
  ii stochastic predator--prey model with impulsive toxicant input in polluted
  environments.
\newblock {\em Nonlinear Analysis: Hybrid Systems}, 27:177--190, 2018.

\bibitem{liu2018dynamics}
Meng Liu, Xin He, and Jingyi Yu.
\newblock Dynamics of a stochastic regime-switching predator--prey model with
  harvesting and distributed delays.
\newblock {\em Nonlinear Analysis: Hybrid Systems}, 28:87--104, 2018.

\bibitem{lotka1925elements}
AJ~Lotka.
\newblock Elements of physical biology.
\newblock {\em Williams and Wilkins}, 1925.

\bibitem{mao2001attraction}
Xuerong Mao.
\newblock Attraction, stability and boundedness for stochastic differential
  delay equations.
\newblock {\em Nonlinear Analysis-Theory Methods and Applications},
  47(7):4795--4806, 2001.

\bibitem{Zhao2024}
Srivastava~Hari Mohan and Zhao Yong-Gang.
\newblock Dynamics for a diffusive prey-predator model with advection and free
  boundaries.
\newblock {\em Math. Methods Appl. Sci.}, 47(7):6216–6233, 2024.

\bibitem{nguyen2019stochastic}
Nhu~N Nguyen and George Yin.
\newblock Stochastic partial differential equation models for spatially
  dependent predator-prey equations.
\newblock {\em Discrete and Continuous Dynamical Systems-B}, 25(1):117--139,
  2019.

\bibitem{pan2024global}
Xu~Pan, Chunlai Mu, and Weirun Tao.
\newblock Global dynamics and spatiotemporal patterns of a two-species
  chemotaxis system with chemical signaling loop and lotka--volterra
  competition.
\newblock {\em Studies in Applied Mathematics}, 153(3):e12746, 2024.

\bibitem{qi2025dynamics}
Haokun Qi, Bing Liu, and Shi Li.
\newblock Dynamics of a reaction--diffusion three-species food chain model:
  Effect of space-time white noise.
\newblock {\em International Journal of Biomathematics}, 18(02):2350094, 2025.

\bibitem{WW2023}
Wei; Liu Jiankang; Song Yi; Zhang~Shuo Wei, Wei;~Xu.
\newblock Stochastic bifurcation and break-out of dynamic balance of
  predator-prey system with markov switching.
\newblock {\em Appl. Math. Model.}, 117:563–576, 2023.

\bibitem{williams1978nonlinear}
Stephen~A Williams and Pao-Liu Chow.
\newblock Nonlinear reaction-diffusion models for interacting populations.
\newblock {\em Journal of Mathematical Analysis and Applications},
  62(1):157--169, 1978.

\bibitem{xiong2024persistence}
Zixiao Xiong, Jing Hu, Ming Ye, and Qimin Zhang.
\newblock Persistence and extinction of a reaction--diffusion vegetation-water
  system with time--space white noise.
\newblock {\em Chaos, Solitons \& Fractals}, 185:115112, 2024.

\bibitem{xue2024exponential}
Zhuo Xue, Xin-Xin Han, and Kai-Ning Wu.
\newblock Exponential input-to-state stability of non-linear
  reaction--diffusion systems with markovian switching.
\newblock {\em Nonlinear Analysis: Hybrid Systems}, 54:101534, 2024.

\bibitem{zhang2012spatial}
Xiao-Chong Zhang, Gui-Quan Sun, and Zhen Jin.
\newblock Spatial dynamics in a predator-prey model with beddington-deangelis
  functional response.
\newblock {\em Physical Review E—Statistical, Nonlinear, and Soft Matter
  Physics}, 85(2):021924, 2012.

\bibitem{zhang2024parameters}
Yi~Zhang, Zhikun Zhang, and Xiangjun Wang.
\newblock Parameters inference for nonlinear wave equations with markovian
  switching.
\newblock {\em arXiv preprint arXiv:2408.05990}, 2024.

\bibitem{zhao2024almost}
Haiqing Zhao and Lijun Pan.
\newblock Almost sure quasi-synchronization of reaction-diffusion neural
  networks with stochastic switching via boundary control.
\newblock {\em IEEE Access}, 2024.

\end{thebibliography}
\end{document}